\DeclareMathOperator{\Orb}{Orb}
\DeclareMathOperator{\Int}{Int}
\DeclareMathOperator{\End}{End}
\newcommand{\sa}{s\alpha}
\newcommand{\SA}{\mathrm{SA}}
\newcommand{\Per}{\mathrm{Per}}
\newcommand{\Rec}{\mathrm{Rec}}
\newcommand{\Bas}{\mathrm{Bas}}
\newcommand{\Sol}{\mathrm{Sol}}
\newcommand{\saBasin}{s\alpha\mathrm{Basin}}
\newcommand{\N}{\mathbb{N}}
\theoremstyle{plain}
\newtheorem{theorem}{Theorem}
\newtheorem{lemma}[theorem]{Lemma}
\newtheorem{proposition}[theorem]{Proposition}
\newtheorem{corollary}[theorem]{Corollary}
\theoremstyle{definition}
\newtheorem{definition}[theorem]{Definition}
\newtheorem{example}[theorem]{Example}
\newtheorem{remark}[theorem]{Remark}
\newtheorem{problem}[theorem]{Problem}
\newtheorem{conjecture}[theorem]{Conjecture}
\title{On backward attractors of interval maps}
\author{Jana Hant\'{a}kov\'{a}}
\address{Mathematical Institute of the Silesian University in Opava, Na Rybni\v{c}ku 1, 74601, Opava, Czech Republic}
\address{Faculty of Applied Mathematics, AGH University of Science and Technology, al. Mickiewicza 30, 30-059 Krak\'{o}w, Poland}
\email{jana.hantakova@math.slu.cz}
\author{Samuel Roth}
\address{Mathematical Institute of the Silesian University in Opava, Na Rybni\v{c}ku 1, 74601, Opava, Czech Republic}
\email{samuel.roth@math.slu.cz}
\subjclass[2020]{Primary: 37E05, 37B20 Secondary: 26A18}
\keywords{interval map, transitivity, special $\alpha$-limit set, $\beta$-limit set, backward attractor}
\thanks{The research was supported by RVO, Czech Republic funding for I\v{C}47813059. The first author was supported by the Marie Sk\l odowska-Curie grant agreement No 883748 from the European Union's Horizon 2020 research and innovation programme.}
\let\oldtocsection=\tocsection
\let\oldtocsubsection=\tocsubsection
\renewcommand{\tocsection}[2]{\hspace{0em}\oldtocsection{#1}{#2}}
\renewcommand{\tocsubsection}[2]{\hspace{2em}\oldtocsubsection{#1}{#2}}
\begin{document}

\begin{abstract}
Special $\alpha$-limit sets ($\sa$-limit sets) combine together all accumulation points of all backward orbit branches of a point $x$ under a noninvertible map. The most important question about them is whether or not they are closed. We challenge the notion of $\sa$-limit sets as backward attractors for interval maps by showing that they need not be closed. This disproves a conjecture by Kolyada, Misiurewicz, and Snoha. We give a criterion in terms of Xiong's attracting center that completely characterizes which interval maps have all $\sa$-limit sets closed, and we show that our criterion is satisfied in the piecewise monotone case. We apply Blokh's models of solenoidal and basic $\omega$-limit sets to solve four additional conjectures by Kolyada, Misiurewicz, and Snoha relating topological properties of $\sa$-limit sets to the dynamics within them. For example, we show that the isolated points in a $\sa$-limit set of an interval map are always periodic, the non-degenerate components are the union of one or two transitive cycles of intervals, and the rest of the $\sa$-limit set is nowhere dense. Moreover, we show that $\sa$-limit sets in the interval are always both $F_\sigma$ and $G_\delta$. Finally, since $\sa$-limit sets need not be closed, we propose a new notion of $\beta$-limit sets to serve as backward attractors. The $\beta$-limit set of $x$ is the smallest closed set to which all backward orbit branches of $x$ converge, and it coincides with the closure of the $\sa$-limit set. At the end of the paper we suggest several new problems about backward attractors.
\end{abstract}

\maketitle

\tableofcontents

\section{Introduction}\label{sec:introduction}
Let a \emph{discrete dynamical system} be defined as an ordered pair $(X,f)$ where $X$ is a compact metric space and $f$ is a continuous map acting on $X$. To understand the dynamical properties of such a system it is necessary to analyze the behavior of the trajectories of any point $x\in X$ under the iteration of $f$. Limit sets of trajectories are a helpful tool for this purpose since they can be used to understand the long term behavior of the dynamical system.

The \emph{$\omega$-limit sets} ($\omega(x)$ for short), i.e. the sets of limit points of forward trajectories, were deeply studied by many authors. For instance, one can ask for a criterion which determines whether a given closed invariant subset of $X$ is an $\omega$-limit set of some point $x\in X$. The question is very hard in general, however the answer for $\omega$-limit sets of a continuous map acting on the compact interval was provided by Blokh et al. in~\cite{BBHS}.  A closely related question is that of characterizing all those dynamical systems which may occur as restrictions of some system to one of its $\omega$-limit sets. These abstract $\omega$-limit sets were studied by Bowen \cite{Bowen} and Dowker and Frielander \cite{Dowker}. It was also proved that each $\omega$-limit set of a continuous map of the interval is contained in a maximal one by Sharkovsky \cite{Sharkovsky}.

\emph{$\alpha$-limit sets} ($\alpha(x)$ for short) were introduced as a dual concept to $\omega$-limit sets and they should be regarded as the source of the trajectory of a point. While for invertible maps $\alpha$-limit sets are well defined, for noninvertible maps there are many possibilities how to construct the limit of the backward trajectory. One possibility is to take as an $\alpha$-limit set the set of all accumulation points in $X$ of the preimage sets $f^{-n}(\{x\})$. This approach was used by Coven and Nitecki \cite{CovNit}, who showed that for an interval map, a point $x$ is nonwandering if and only if $x\in\alpha(x)$. A recent result about $\alpha$-limit sets of unimodal interval maps is due to Cui and Ding \cite{CuiDing}. Another approach used by Balibrea et al. \cite{Bal} proposes instead of looking at all possible preimages to pick one backward branch and check accumulation points of this sequence. The union of the sets of accumulation points over all backward branches of the map was called a \emph{special $\alpha$-limit set} ($s\alpha(x)$ for short) by Hero \cite{Hero}.

While $\alpha$-limit sets and special $\alpha$-limit sets seem to be similar to $\omega$-limit sets, they were not much explored so far. The reason for this is that they may have very rich structure, and also it is very hard to control the dynamics backward. For instance, it is clear that $\alpha$-limit sets or $\omega$-limit sets are always closed, but the situation of special $\alpha$-limit sets is unclear. By definition \cite{Hero}, those sets are in general uncountable unions of closed sets, so a priori their structure may be very complicated. A recent study by Kolyada et al.~\cite{KMS} provided some answers. In particular, they showed that a special $\alpha$-limit set need not be closed in the general setting. They investigated special $\alpha$-limit sets of interval maps and proved that for interval maps with a closed set of periodic points, every special $\alpha$-limit set has to be closed. This result led to the following conjecture:
\begin{conjecture}\cite{KMS}
	For all continuous maps of the unit interval all special $\alpha$-limit sets are closed.
\end{conjecture}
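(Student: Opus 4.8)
The abstract has already revealed the punchline: this conjecture is \emph{false}, so my plan is not to prove it but to refute it by exhibiting a continuous interval map with a special $\alpha$-limit set that fails to be closed. The natural starting point is the very theorem of Kolyada, Misiurewicz, and Snoha that a counterexample cannot exist when $\Per(f)$ is closed. This immediately narrows the search: any counterexample must live among maps for which $\Per(f)$ is \emph{not} closed, that is, maps carrying periodic orbits of unbounded period that accumulate on some non-periodic point. Such behavior is the hallmark of positive topological entropy and, more precisely, of the solenoidal (infinite period-doubling) and basic (transitive cycle of intervals) $\omega$-limit sets whose flexible models are due to Blokh. I would therefore try to realize the counterexample inside one of these models.

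The construction I would aim for is the following: a point $x$, a sequence of periodic points $q_n \in \sa(x)$ with $q_n \to p$, and $p \notin \sa(x)$; then $\sa(x)$ is not closed. To place each $q_n$ in $\sa(x)$ I must produce, for each $n$, a backward orbit branch $x = x_0, x_1, x_2, \dots$ (so $f(x_{k+1}) = x_k$) that eventually winds around the periodic orbit $O(q_n)$ forever and hence accumulates on $q_n$; Blokh's flexible models supply exactly the combinatorial freedom to prescribe such a branch from $x$ into each $O(q_n)$. The decisive design feature is to arrange the orbits $O(q_n)$ in a nested sequence converging to $p$ together with a transition structure that \emph{traps} backward branches: any backward branch is eventually confined to a single level $n$, so it accumulates on $q_n$ alone and cannot migrate through infinitely many levels to reach $p$.

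The verification splits into two asymmetric tasks. Showing $q_n \in \sa(x)$ is the constructive and easier half: one checks that the prescribed winding branches genuinely exist in the model and genuinely accumulate on $q_n$, which follows directly from the model's design. The hard part, and the step I expect to be the main obstacle, is the negative statement $p \notin \sa(x)$, a claim about \emph{every} backward orbit branch of $x$ at once, ranging over an uncountable branching tree. I would need a structural trapping estimate formalizing the transition property: once a backward branch reaches level $n$ it stays at level $n$, accumulating only on $q_n$, so that the sole remaining route to $p$, namely a branch that visits infinitely many levels, is impossible. Keeping this exclusion airtight while simultaneously maintaining the backward connections to all the $q_n$ is the delicate balance at the heart of the refutation, and I would expect the clean invariant that decides exactly when such a balance is achievable to be precisely the attracting-center criterion promised in the abstract.
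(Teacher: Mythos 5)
Your refutation strategy---periodic points $q_n\in\sa(x)$ accumulating on a point excluded from $\sa(x)$, with a ``trapping'' property forcing every backward branch to be eventually confined to a single level of a nested structure---is exactly the skeleton of the paper's counterexample (Section~\ref{sec:example}). There the map of Chu and Xiong carries a nested sequence of cycles of intervals $M_0\supset M_1\supset\cdots$ of periods $2^n$; the excluded point is the left endpoint $x$ of $J_\infty=\bigcap J_n$; the periodic left endpoints $a_n$ of the intervals $A_n$ lie in $\sa(x)$ (via the expanding linear branches $f^{2^n}|_{A_n}$, which supply a backward branch converging to $a_n$) and $a_n\to x$; and invariance of each $M_{n+1}$ forces the level of any backward branch to be non-increasing in backward time, hence eventually constant, so no branch accumulates on $x$. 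That is precisely your trapping estimate. The genuine gap is that your proposal stops at the plan: no map is exhibited, and the entire mathematical content of the refutation is the construction together with the verification that the two requirements (backward access to every $q_n$, exclusion of the limit point) can coexist---the ``delicate balance'' your last sentence names but does not resolve.

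Two of your guiding heuristics are moreover wrong and would have misdirected the search. First, non-closedness of $\Per(f)$ is not ``the hallmark of positive topological entropy'': the Chu--Xiong map has \emph{zero} topological entropy (any infinitely renormalizable zero-entropy map already has $\Per(f)$ non-closed), while conversely the paper proves that every piecewise monotone map---including all the standard positive-entropy horseshoe examples---has all $\sa$-limit sets closed. Second, basic sets cannot house a counterexample: Theorem~\ref{th:almost-closed} shows that a point of a basic set lying in $\overline{\sa(y)}$ already lies in $\sa(y)$, and Theorem~\ref{th:main-not-closed} shows the phenomenon is exclusively solenoidal ($\sa(y)$ is non-closed if and only if $y$ lies in a maximal solenoidal set containing a non-recurrent point of the Birkhoff center). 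So of your two candidate ``flexible models,'' only the solenoidal one can work, and entropy is orthogonal to the question; the correct invariant is the one the paper isolates, namely whether the attracting center $\Lambda^2(f)$ equals the Birkhoff center $\overline{\Rec(f)}$.
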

We disprove the conjecture by showing a counterexample of an interval map with a special $\alpha$-limit set which is not closed and give the properties of interval maps that determine if all special $\alpha$-limit sets are closed in Sections~\ref{sec:not-closed}. On the other hand, we show that for all continuous maps of the unit interval all special $\alpha$-limit sets are both $F_\sigma$ and $G_\delta$. We give further topological properties of special $\alpha$-limit sets of interval maps. If $\sa(x)$ is not closed, then it is uncountable and nowhere dense. If $\sa(x)$ is closed, then it is the union of a nowhere dense set and finitely many (perhaps zero) closed intervals, and in Section~\ref{sec:int-nwd} we prove some amount of transitivity of $f$ on those intervals. Since $\sa$-limit sets need not be closed, we propose a new notion of $\beta$-limit sets to serve as backward attractors in Section~\ref{sec:open-problems}. The $\beta$-limit set of $x$ is the smallest closed set to which all backward orbit branches of $x$ converge, and it coincides with the closure of the $\sa$-limit set.

Kolyada et al.~also made the following conjecture.
\begin{conjecture}\cite{KMS}
The isolated points in a special $\alpha$-limit set for an interval map are always periodic.
\end{conjecture}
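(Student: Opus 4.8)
The plan is to distill, from a single backward branch accumulating at $x$, enough dynamical invariance to pin $x$ down, and then to exploit the order structure of the interval. First I would record three elementary properties of a special $\alpha$-limit set $\sa(y)$, each obtained by chasing one branch $(y_n)_{n\ge 0}$ with $f(y_{n+1})=y_n$: (i) $\sa(y)$ is forward invariant, since $y_{m_j}\to z$ gives $y_{m_j-1}=f(y_{m_j})\to f(z)$; (ii) $\sa(y)\subseteq\Omega(f)$, the nonwandering set, since for a neighbourhood $V\ni z$ and indices $m_j<m_{j'}$ with $y_{m_j},y_{m_{j'}}\in V$ one has $f^{m_{j'}-m_j}(y_{m_{j'}})=y_{m_j}$, so $f^{m_{j'}-m_j}(V)\cap V\neq\emptyset$; and (iii) the absorption property $\omega(z)\subseteq\sa(y)$ for every $z\in\sa(y)$. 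Property (iii) is the main engine: given $w\in\omega(z)$ with $f^{t_i}(z)\to w$, I approximate $z$ by branch points $y_{m_{j}}$ and use continuity of the \emph{fixed} iterate $f^{t_i}$ to select indices $N_i:=m_{j(i)}-t_i\to\infty$ with $y_{N_i}=f^{t_i}(y_{m_{j(i)}})\to w$, exhibiting $w$ as an accumulation point of the branch.

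Granting these, the problem reduces to a clean statement. Suppose $x$ is isolated in $\sa(y)$ and fix $\delta>0$ with $(x-\delta,x+\delta)\cap\sa(y)=\{x\}$. The key reduction is that it suffices to produce a \emph{single} point $z\in\sa(y)$ with $x\in\omega(z)$. Indeed, by (iii) the orbit $\Orb(z)$ lies in $\sa(y)$, so a subsequence $f^{n_k}(z)\to x$ consists of points of $\sa(y)$; by isolation these are eventually equal to $x$, whence $f^{n_k}(z)=x$ for two indices $n<n'$ and $f^{\,n'-n}(x)=x$, so $x$ is periodic. Taking $z=x$ shows in particular that a recurrent isolated point is periodic. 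To feed the reduction I would also use that, by (i) and (ii), the accumulation set $A\subseteq\sa(y)$ of the branch is closed, forward invariant, and carries a surjective $f|_A$; this lets me build a backward orbit $x=v_0\leftarrow v_1\leftarrow\cdots$ inside $A$, which either repeats, forcing periodicity by the same cancellation, or accumulates at some $\xi\in A\subseteq\sa(y)$.

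The main obstacle is precisely the bridge from this return structure to the hypothesis $x\in\omega(z)$: from $v_{j_\ell}\to\xi$ and $f^{j_\ell}(v_{j_\ell})=x$ one \emph{cannot} conclude $f^{j_\ell}(\xi)\to x$, since the iterate count $j_\ell\to\infty$ while the distortion of $f^{j_\ell}$ over the shrinking distance $|\xi-v_{j_\ell}|$ is uncontrolled; the compactness argument stalls exactly here, and this is where the one-dimensional structure must enter. I expect the real work to lie in the non-recurrent case $x\notin\omega(x)$, and I would resolve it by analysing $\omega(x)\subseteq\sa(y)$ through Blokh's trichotomy. If $\omega(x)$ is finite it is a periodic orbit, and a short interval-order argument — passing to a one-sided, monotone subsequence of branch points approaching $x$ and using that $x$ is nonwandering, so the branch returns to every neighbourhood of $x$ with a bounded return time along a suitable subsequence — forces a relation $f^{p}(x)=x$. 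If $\omega(x)$ is infinite it is either solenoidal, hence a perfect minimal set, or basic, hence supported on a transitive cycle of intervals; in either case the self-accumulation of $\omega(x)\subseteq\sa(y)$, transported back to $x$ through the branch's excursions and their turn-around preimages (which by (i)–(iii) again lie in $\sa(y)$), yields points of $\sa(y)$ other than $x$ in $(x-\delta,x+\delta)$, contradicting isolation. Thus no non-periodic point of $\sa(y)$ can be isolated, which is the assertion.
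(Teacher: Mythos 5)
Your preliminary lemmas (i)--(iii) are correct ((iii) is Hero's Lemma~1, which the paper also uses), and your reduction is sound: if $x$ is isolated in $\sa(y)$ and $x\in\omega(z)$ for some $z\in\sa(y)$, then $\Orb(z)\subset\sa(y)$ and isolation forces $f^{n}(z)=f^{n'}(z)=x$ for two distinct times, so $x$ is periodic; in particular every \emph{recurrent} isolated point is periodic. But the proof effectively stops there, and you say so yourself. For non-recurrent $x$ both of your sketches rest on unsupported steps located exactly where the difficulty lies. In the finite-$\omega(x)$ case, the parenthetical ``the branch returns to every neighbourhood of $x$ with a bounded return time along a suitable subsequence'' begs the question: if returns along a subsequence had bounded time gaps, you could pass to a sub-subsequence with constant gap $k$ and conclude $f^k(x)=x$ by continuity alone, with no reference to $\omega(x)$ at all; conversely, for a non-periodic $x$ the gaps are necessarily unbounded, so no such subsequence exists and nothing in your sketch forces $f^{p}(x)=x$. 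In the infinite-$\omega(x)$ case, transporting the self-accumulation of $\omega(x)$ back to $x$ via ``turn-around preimages (which by (i)--(iii) again lie in $\sa(y)$)'' tacitly uses backward invariance of $\sa(y)$, which is false: (i)--(iii) give forward invariance only, and preimages of points of $\sa(y)$ need not lie in $\sa(y)$. Moreover, since $x$ is non-recurrent, $\omega(x)$ lies at positive distance from $x$, so its perfectness cannot by itself contradict isolation of $x$, which is a purely local condition at $x$.

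There is also a structural misstep: Blokh's trichotomy classifies \emph{maximal} $\omega$-limit sets, and what the argument needs is not the type of the set $\omega(x)$ but the type of the maximal $\omega$-limit set containing the \emph{point} $x$ itself. For that one needs $x\in\Lambda^1(f)$, i.e.\ the inclusion $\SA(f)\subset\Lambda^1(f)$ from Theorem~\ref{th:structure}, which is strictly stronger than your (ii) $\sa(y)\subseteq\Omega(f)$. This is how the paper argues: $x$ lies in a periodic orbit, a basic set $B$, or a solenoidal set $Q$; in the basic case $\Orb(x)\subset B\cap\sa(y)$ is infinite and Theorem~\ref{th:basic}~(1) yields $B\subset\sa(y)$ with $B$ perfect; in the solenoidal case Theorem~\ref{th:solenoidal} yields $\sa(y)\cap Q=S$ perfect; and the preperiodic case needs Theorem~\ref{th:basic}~(2), whose proof runs through prolongation sets and Blokh's semiconjugacy models. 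That machinery (or some equivalent one-dimensional argument) is precisely what your proposal identifies as missing but does not supply, so the non-recurrent case remains unproved.
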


 We verify this conjecture in Section~\ref{sec:iso-pts}. We also show that a countable special $\alpha$-limit set for an interval map is a union of periodic orbits. These results are opposite to the case of $\omega$-limit sets. The $\omega$-limit sets of a general dynamical system do not posses any periodic isolated points unless $\omega(x)$ is a single periodic orbit~\cite{Shar65}. 

The authors of \cite{KMS} also investigated the properties of special $\alpha$-limit sets of \emph{transitive} interval maps and stated the following conjecture:
\begin{conjecture}\cite{KMS}\label{conj:3}
Let $f:[0,1]\to[0,1]$ be a continuous map and $x,y\in[0,1]$.
	\begin{itemize}
	\item If $x\neq y$ and $s\alpha(x)=s\alpha(y)=[0,1]$, then $f$ is transitive.		
	\item If $s\alpha(x)=[0,1]$ then either $f$ is transitive or there is $c\in (0,1)$ such that $f|_{[0,c]}$ and $f|_{[c,1]}$ are transitive.
	\end{itemize}
\end{conjecture}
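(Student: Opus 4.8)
The natural approach is to feed the hypothesis $\sa(x)=[0,1]$ into the structure theorem for special $\alpha$-limit sets quoted above. Since $\sa(x)=[0,1]$ is a single non-degenerate component, it is the union of one or two transitive cycles of intervals. If it is a single cycle, its union is all of $[0,1]$, so $f$ is transitive and the first alternative holds. Suppose instead it is a union of two transitive cycles with unions $U_1$ and $U_2$ (having $p$ and $q$ components respectively), with disjoint interiors and $U_1\cup U_2=[0,1]$. Both $U_i$ are closed and $f$-invariant, and a backward-trapping argument is the key tool: since preimages of points interior to one $U_i$ remain inside $U_i$, any backward branch accumulating on the interior of $U_i$ is eventually trapped there, forcing its starting point into $U_i$. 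Hence any $z$ with $\sa(z)=[0,1]$ must lie in $U_1\cap U_2$. Because maximal components of $U_1$ and $U_2$ cannot be adjacent to components of the same type, they alternate along $[0,1]$, so $U_1\cap U_2$ is exactly the set of $p+q-1$ interior division points, a finite forward-invariant set.

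This reframes the whole conjecture. The conjectured splitting into invariant halves $[0,c]$ and $[c,1]$ is precisely the case $p=q=1$, in which $U_1\cap U_2=\{c\}$ is a single point. Under this lens the first item would follow from the second: if two distinct points had full $\sa$, then $\lvert U_1\cap U_2\rvert\ge 2$, which is impossible once $p=q=1$, so we could not be in the two-cycle case and $f$ would be transitive. Everything therefore rests on the implication ``two transitive cycles filling $[0,1]$ imply $p=q=1$''. I expect this implication to be the main obstacle, and in fact I expect it to fail.

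The obstacle is fatal: the second alternative is too rigid, and the conjecture is \emph{false}. For a counterexample take $p=2$, $q=1$: let $\{[0,a],[b,1]\}$ be a period-two cycle on which $f$ swaps the two intervals (so $U_1=[0,a]\cup[b,1]$ and $f$ is transitive on $U_1$), and let $[a,b]$ be a fixed interval on which $f$ is topologically mixing, assembled from full-branch pieces so that $f(a)=b$, $f(b)=a$ and $f$ is continuous at the junctions. Then $f$ maps $[0,a]$ onto $[b,1]$, so no $c\in(0,1)$ makes both $[0,c]$ and $[c,1]$ invariant; and $U_1$ is a proper closed invariant set with non-empty interior, so $f$ is not transitive. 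Finally, using the standard fact that every point of a topologically mixing interval map has a backward branch dense in the whole interval (applied to $f|_{[a,b]}$ and to $f^2$ on the components of $U_1$), one checks that the shared period-two orbit $\{a,b\}=U_1\cap U_2$ has backward branches dense in $U_1$ and in $U_2$, whence $\sa(a)=\sa(b)=[0,1]$. Thus $a\neq b$ give two points of full $\sa$ with $f$ neither transitive nor split, refuting \emph{both} items. The correct replacement is exactly the structure theorem: the two invariant transitive pieces are transitive cycles of intervals whose periods may exceed one, and only the degenerate case $p=q=1$ produces the invariant halves $[0,c]$ and $[c,1]$ envisaged by Kolyada, Misiurewicz and Snoha.
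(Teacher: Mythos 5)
Your resolution is correct and is essentially the paper's own: it reduces via Theorem~\ref{th:M-nd} to $[0,1]=L\cup L'$ with $L,L'$ transitive cycles, traps every point with a full $\sa$-limit set in $L\cap L'$ via Corollary~\ref{cor:tc}, and refutes both items with a period-two cycle swapped around a fixed mixing middle interval --- exactly the second map of the paper's Figure~\ref{fig:full} (note that your ``standard fact'' about dense backward branches in mixing maps fails at non-accessible endpoints, so it is legitimate here only because your full-branch construction makes all endpoints accessible). The paper goes one step further than your alternation count $p+q-1$: the component combinatorics in Theorem~\ref{th:at-least-3} and Corollary~\ref{cor:1-or-2} show that $L$ and $L'$ together have at most three components, so the sharp correction is that three points with full $\sa$-limit sets force transitivity, while one or two points force precisely the two-transitive-cycle decomposition you describe.
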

We slightly correct these conjectures in Section~\ref{sec:trans-full}. We show that $f$ is transitive if there are three distinct points $x, y, z \in [0,1]$ with $s\alpha(x)=s\alpha(y)=\sa(z)=[0,1]$ . If $f$ has one or two points with special $\alpha$-limit sets equal to $[0,1]$, but not more, then $[0,1]$ is the union of two transitive cycles of intervals.

It is known that if two $\omega$-limit sets of an interval map contain a common open set, then they are equal. The last conjecture in \cite{KMS} suggested that a similar property holds for special $\alpha$-limit sets:
\begin{conjecture}\cite{KMS}\label{conj:4}
Let $f$ be a continuous map $[0,1]\rightarrow [0,1]$ and $x,y\in[0,1]$. If $Int(s\alpha(x)\cap s\alpha(y))\neq\emptyset$ then $s\alpha(x)=s\alpha(y)$.	
\end{conjecture}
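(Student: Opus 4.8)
The plan is to treat this statement with suspicion: I expect Conjecture~\ref{conj:4} to be \emph{false} as stated, with only the interval (non-degenerate) parts of $\sa(x)$ and $\sa(y)$ forced to agree while the nowhere dense remainders may differ. Let me first isolate the positive content that does survive. If $\Int(\sa(x)\cap\sa(y))\neq\emptyset$, choose an open interval $J\subseteq\sa(x)\cap\sa(y)$. By the structure theorem announced in the introduction, $J$ meets a non-degenerate component of each set, and these components assemble into transitive cycles of intervals. Since $\sa(x)$ is forward invariant (if $p=\lim x_{n_k}$ along a backward branch then $f(p)=\lim x_{n_k-1}$ is an accumulation point of the same branch), it contains the \emph{whole} cycle $C_x$ running through $J$; likewise $\sa(y)\supseteq C_y$. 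For any small $U\subseteq J$, transitivity gives $\overline{\bigcup_{n\ge 0}f^n(U)}=C_x$ and $=C_y$, so $C_x=C_y=:C$. Thus a common transitive cycle of intervals $C$ lies in both $\sa(x)$ and $\sa(y)$; this is the part of the conjecture that is genuinely true and is presumably the corrected statement.

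The obstruction to upgrading this to $\sa(x)=\sa(y)$ is exactly the phenomenon established earlier in the paper and absent for $\omega$-limit sets: an $\sa$-limit set may carry extra isolated periodic points, or even extra cycles of intervals, beyond its interval part. I would exploit this to build a counterexample. Take $C=[0,\tfrac12]$ and let $f|_C$ be a mixing (reflected tent) map with $f(\tfrac12)=\tfrac12$, so $C$ is a forward invariant transitive interval. On $[\tfrac12,1]$ put a full tent with $f(\tfrac12)=\tfrac12$, peak $f(\tfrac34)=1$, and $f(1)=\tfrac12$; then $[\tfrac12,1]$ is forward invariant, $f([\tfrac12,1])=[\tfrac12,1]$ meets $C$ only at the single point $\tfrac12$, and $f$ has a repelling fixed point $p=\tfrac56$. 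The only ``escape gate'' from $C$ to the upper part is $\tfrac12$, whose sole outside preimage is $w=1$.

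The engine is a gate criterion: a backward branch of $z\in C$ leaves $C$ if and only if the backward orbit $\bigcup_n f^{-n}(z)$ contains $\tfrac12$, equivalently $z\in O^+(\tfrac12)=\{\tfrac12\}$. Hence for every $x\in[0,\tfrac12)$ all backward branches stay in $C$, and mixing of $f|_C$ makes them fill $C$, so $\sa(x)=[0,\tfrac12]$. For $y=\tfrac12$, however, the branch $\tfrac12\leftarrow 1\leftarrow\tfrac34\leftarrow\tfrac78\leftarrow\tfrac{13}{16}\leftarrow\cdots$ converges to $p$, since the falling inverse branch $u\mapsto\tfrac54-\tfrac{u}{2}$ is a contraction toward $\tfrac56$; thus $p\in\sa(\tfrac12)$, and as $f|_{[\frac12,1]}$ is also mixing these escaping branches fill $[\tfrac12,1]$, giving $\sa(\tfrac12)=[0,1]$. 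Then $\Int(\sa(x)\cap\sa(\tfrac12))\supseteq(0,\tfrac12)\neq\emptyset$ while $\sa(x)=[0,\tfrac12]\neq[0,1]=\sa(\tfrac12)$, refuting the conjecture. Tuning the upper map so that $p$ is the only backward attractor there yields instead $\sa(\tfrac12)=[0,\tfrac12]\cup\{p\}$, exhibiting the discrepancy as a single isolated periodic point.

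The main obstacle I anticipate is the careful computation of these $\sa$-sets rather than the construction itself. The gate criterion must be stated and proved cleanly, as it drives the whole example, and I must invoke the correct ``backward branches of a mixing interval map fill the space'' statement to certify both $\sa(x)=C$ and the filling of $[\tfrac12,1]$ by escaping branches; checking that no unintended branch of $x\in[0,\tfrac12)$ slips out through the gate is the delicate point. By contrast, the positive half (a common transitive cycle of intervals) is routine once the structure theorem and forward invariance are in hand.
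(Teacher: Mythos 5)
Your instinct and your counterexample are both correct, and your refutation uses the same mechanism as the paper's: transitive cycles glued at a common fixed endpoint, so that interior points of one cycle have exactly that cycle as their $\sa$-limit set while the glue point picks up more. Your verification plan is sound. The gate argument works: since $f([0,\tfrac12])=[0,\tfrac12]$ and the only point of $(\tfrac12,1]$ mapping into $[0,\tfrac12]$ is $1$, with $f(1)=\tfrac12$, any backward branch of $z\in[0,\tfrac12)$ leaving $[0,\tfrac12]$ would force $z$ to lie in the forward orbit of the fixed point $\tfrac12$, i.e.\ $z=\tfrac12$. The filling statements you need are exactly what the paper quotes from \cite[Proposition 3.10]{KMS}, namely $\sa(x)\supseteq M$ for all $x\in M\setminus E$; both of your tents have $E=\emptyset$ because all endpoints are accessible ($0=f(\tfrac14)$, $1=f(\tfrac34)$, $\tfrac12=f^2(\tfrac14)=f^2(\tfrac34)$), so $\sa(z)=[0,\tfrac12]$ for $z\in[0,\tfrac12)$, while $\sa(\tfrac12)\supseteq\sa(1)\supseteq[\tfrac12,1]$ yields $\sa(\tfrac12)=[0,1]$. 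Where you and the paper genuinely differ is in the positive content. The paper's counterexample chains three horseshoes, producing \emph{three} distinct $\sa$-limit sets ($[\tfrac13,\tfrac23]$, $[0,\tfrac23]$, $[\tfrac13,1]$) containing a common open interval, and its correction of the conjecture is the much stronger Corollary~\ref{cor:at-most-3}: at most three distinct $\sa$-limit sets can contain a given nonempty open set, proved via Theorem~\ref{th:at-most-3} and the weak-cycle machinery of Lemma~\ref{lem:cyclesmeet}. Your proposed correction (a common transitive cycle lies in both sets) is true but far weaker, and a two-interval example can never witness the sharpness of the bound three.

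Two smaller cautions. In your positive part, forward invariance of $\sa(x)$ only gives $\Orb(J)\subseteq\sa(x)$; since $\sa$-limit sets need not be closed, you should invoke Theorem~\ref{th:M-nd} or Corollary~\ref{cor:tc} (which you implicitly cite) to place the whole closed cycle inside $\sa(x)$. And the closing ``tuning'' remark is not achievable the way you describe: an upper tent is either full, in which case mixing forces $\sa(\tfrac12)=[0,1]$, or sub-full, in which case $1$ has no preimage in $(\tfrac12,1]$, no branch escapes, and $\sa(\tfrac12)=[0,\tfrac12]$. To realize $\sa(\tfrac12)=[0,\tfrac12]\cup\{p\}$ one needs a qualitatively different upper map --- for instance constant $\equiv\tfrac12$ on $[\tfrac12,\tfrac58]$, rising linearly to $1$ on $[\tfrac58,\tfrac78]$, then falling linearly to $\tfrac34$ on $[\tfrac78,1]$ --- so that every escaping branch has unique preimages contracting to the fixed point $p=\tfrac34$. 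This does not affect your refutation, which stands.
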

We correct this conjecture by showing that at most three distinct special $\alpha$-limit sets of $f$ can contain a given nonempty open set in Section~\ref{sec:common}.

The paper is organised as follows. Sections~\ref{sec:introduction} and~\ref{sec:terminology} are introductory. Section~\ref{sec:omega} investigates the relation of maximal $\omega$-limit sets to special $\alpha$-limit sets and provides tools necessary for proving the main results. It also contains a simple example showing that, unlike $\omega$-limit sets, the special $\alpha$-limit sets of an interval map need not be contained in maximal ones. Section~\ref{sec:general} is devoted to the above mentioned results on various properties of special $\alpha$-limit sets of interval maps. Section~\ref{sec:not-closed} studies properties of special $\alpha$-limit sets which are not closed. The paper closes with open problems and related questions in Section~\ref{sec:open-problems}.

\section{Terminology}\label{sec:terminology}
Let $X$ be a compact metric space and $f:X\to X$ a continuous map. A sequence $\{x_n\}_{n=0}^{\infty}$ is called
\begin{itemize}
\item the \emph{forward orbit} of a point $x$ if $f^n(x)=x_n \text{ for all }n\geq0$,
\item a \emph{preimage sequence} of a point $x$ if $f^n(x_n)=x \text{ for all }n\geq0$,
\item a \emph{backward orbit branch} of a point $x$ if $x_0=x$ and $f(x_{n+1})=x_{n}$ for all $n\geq0$.
\end{itemize}
A point $y$ belongs to the \emph{$\omega$-limit set of a point $x$}, denoted by $\omega(x)$, if and only if the forward orbit of $x$ has a subsequence $\{x_{n_i}\}_{i=0}^{\infty}$ such that $x_{n_i}\to y$. A point $y$ belongs to the \emph{$\alpha$-limit set of a point $x$}, denoted by $\alpha(x)$, if and only if some preimage sequence of $x$ has a subsequence $\{x_{n_i}\}_{i=0}^{\infty}$ such that $x_{n_i}\to y$. And a point $y$ belongs to the \emph{special $\alpha$-limit set of a point $x$}, also written as the \emph{$\sa$-limit set} and denoted $\sa(x)$, if and only if some backward orbit branch of $x$ has a subsequence $\{x_{n_i}\}_{i=0}^{\infty}$ such that $x_{n_i}\to y$. If we wish to emphasize the map, we will write $\omega(x,f)$, $\alpha(x,f)$ and $\sa(x,f)$.

To summarize, the $\omega$, $\alpha$, and $\sa$-limit sets of a point $x$ are defined as follows. The set $\omega(x)$ is the set of all accumulation points of its forward orbit and $\alpha(x)$ (resp. $\sa(x)$) is the set of all accumulation points of all its preimage sequences (resp. of all its backward orbit branches).

Let $T:X\rightarrow X$ and $F:Y\rightarrow Y$ be continuous maps of compact metric spaces. If there is a surjective map $\phi:X\rightarrow Y$ such that $\phi\circ T=F\circ \phi$ then it is said that $\phi$ semiconjugates $T$ to $F$ and $\phi$ is a \emph{semiconjugacy}.

Let $f:X \to X$ be a continuous map. A set $A\subset[0,1]$ is \emph{invariant} if $f(A)\subset A$. The forward orbit of a point, regarded as a subset of $X$ rather than a sequence, will be denoted by $\Orb(x)=\{f^n(x)~:~n\geq0\}$. The \emph{forward orbit of a set} is $\Orb(A)=\bigcup\{f^n(A)~:~n\geq 0\}$. We call $f$ \emph{transitive} if for any two nonempty open subsets $U, V \subset X$ there is $n\geq0$ such that $f^n(U)\cap V\neq\emptyset$. We call $f$ \emph{topologically mixing} if for any two nonempty open subsets $U, V\subset X$ there is an integer $N\geq 0$ such that $f^n(U)\cap V\neq \emptyset$ for all $n\geq N$.

Now let $f:[0,1]\to[0,1]$ be an interval map. We write $\Per(f)$, $\Rec(f)$, and $\Omega(f)$ for the sets of periodic points, recurrent points, and non-wandering points of $f$, respectively. We say that $x$ is \emph{preperiodic} if $x\notin\Per(f)$ but $f^n(x)\in\Per(f)$ for some $n\geq 1$. We write $\Lambda^1(f)=\bigcup_{x\in[0,1]}\omega(x)$ for the union of all $\omega$-limit sets of $f$ and $\SA(f)=\bigcup_{x\in[0,1]}\sa(x)$ for the union of all $\sa$-limit sets. Following~\cite{Xiong} we define the \emph{attracting center} of $f$ as $\Lambda^2(f)=\bigcup_{x\in\Lambda^1(f)} \omega(x)$. The \emph{Birkhoff center} of $f$ is the closure of the set of recurrent points $\overline{\Rec(f)}$ and coincides with $\overline{\Per(f)}$~\cite{CovHed}. If the map $f$ is clear from the context, we may drop it from the notation. The relation of these sets is given by the following summary theorem from the works of Hero and Xiong~\cite{Hero,Xiong}.
\begin{theorem}\label{th:structure}\cite{Hero,Xiong}
For any continuous interval map $f:[0,1]\to[0,1]$, we have
\begin{equation*}
\Per \subset \Rec \subset \Lambda^2 = \SA \subset \overline{\Rec} \subset \Lambda^1 \subset \Omega.
\end{equation*}
\end{theorem}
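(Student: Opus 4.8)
The plan is to treat the chain as a compilation: most links are elementary, and the real content sits in the identification $\SA=\Lambda^2$ and in the containment of the attracting center in the Birkhoff center, $\Lambda^2\subseteq\overline{\Rec}$. I would first dispose of the routine inclusions. The containment $\Per\subseteq\Rec$ is immediate, since a periodic point lies in its own $\omega$-limit set. For $\Rec\subseteq\Lambda^2$, note that a recurrent point $x$ satisfies $x\in\omega(x)$, whence $x\in\Lambda^1$; feeding the index $x\in\Lambda^1$ into the defining union $\Lambda^2=\bigcup_{z\in\Lambda^1}\omega(z)$ gives $x\in\omega(x)\subseteq\Lambda^2$. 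The link $\Lambda^1\subseteq\Omega$ holds in any system: if $y\in\omega(z)$ and $U$ is a neighborhood of $y$, pick iterates $f^{m}(z),f^{n}(z)\in U$ with $m<n$; then $f^{\,n-m}(U)\cap U\ni f^n(z)$, so $y$ is non-wandering. Finally $\overline{\Rec}\subseteq\Lambda^1$ I would quote from the classical Sharkovsky structure theory, using the already-cited equality $\overline{\Rec}=\overline{\Per}$: a limit of periodic points of an interval map is again an $\omega$-limit point.

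The heart of the statement is $\SA=\Lambda^2$. For $\SA\subseteq\Lambda^2$ I would start from the easy observation that every special $\alpha$-limit point is non-wandering: if $x_{n_i}\to y$ along a backward branch $x=x_0,x_1,\dots$ of $x$, then for $n_i<n_j$ with both $x_{n_i},x_{n_j}$ near $y$ one has $f^{\,n_j-n_i}(x_{n_j})=x_{n_i}$, producing a return near $y$. Upgrading ``non-wandering'' to ``two levels deep in the $\omega$-hierarchy'' is the delicate interval-specific step, and here I would follow Hero: for interval maps the recurrence detected by a backward branch can be relocated inside an $\omega$-limit set, and then inside the $\omega$-limit set of an $\omega$-limit point, placing $y$ in $\Lambda^2$.

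For the reverse inclusion $\Lambda^2\subseteq\SA$ the key structural fact I would exploit is that an $\omega$-limit set is strongly invariant, $f(\omega(w))=\omega(w)$, so every point of it admits a full backward branch remaining inside it. Given $y\in\Lambda^2$, write $y\in\omega(z)$ with $z\in\omega(w)$; since $\omega(w)$ is closed and invariant, $\Orb(z)\subseteq\omega(w)$, hence $\omega(z)\subseteq\omega(w)$ and both $z,y\in\omega(w)$. It then remains to produce a backward branch accumulating at $y$, and I would do this by cases following Blokh's classification of $\omega$-limit sets: for a finite (periodic) set the periodic backward branch through $y$ already accumulates at $y$; for a solenoidal set the restriction is a minimal homeomorphism, so the unique backward branch is dense; and for a transitive cycle of intervals one constructs a dense backward branch. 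In each case strong invariance guarantees the branch exists, and density delivers accumulation at the prescribed $y$.

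The remaining deep input is $\Lambda^2\subseteq\overline{\Rec}$, which I would take from Xiong's analysis of the attracting center: iterating the $\omega$-operation on an interval map stabilizes after two steps, so the attracting center already lies inside the center $\overline{\Rec}$. Combined with $\Rec\subseteq\Lambda^2$ this also yields $\overline{\Lambda^2}=\overline{\Rec}$, consistent with the displayed chain. I expect the main obstacle to be precisely the two halves of $\SA=\Lambda^2$: the forward inclusion because measuring the exact depth of a special $\alpha$-limit point in the $\omega$-hierarchy is subtle, and the backward inclusion because it forces one to invoke the full structural dichotomy (periodic / solenoidal / interval-transitive) for $\omega$-limit sets in order to build backward branches that accumulate where prescribed.
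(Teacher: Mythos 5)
The paper itself offers no proof of this statement: it is presented as a summary theorem compiled from the literature and attributed to Hero and Xiong, so there is no internal argument to compare against. Judged on its own terms, your architecture is sound and matches the paper's attributions: the elementary links ($\Per\subset\Rec$, $\Rec\subset\Lambda^2$, $\Lambda^1\subset\Omega$) are proved correctly; $\overline{\Rec}\subset\Lambda^1$ correctly reduces, via $\overline{\Rec}=\overline{\Per}$ (Coven--Hedlund), to the closedness of $\Lambda^1$ for interval maps; and the two deep inclusions, $\SA\subset\Lambda^2$ and $\Lambda^2\subset\overline{\Rec}$, are deferred to Hero and Xiong exactly as the paper's citation does. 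One small quibble on the latter: stabilization of the $\omega$-iteration at level two does not by itself yield $\Lambda^2\subseteq\overline{\Rec}$; that containment is a separate theorem of Xiong, which is fine since you are quoting it anyway.

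Your sketch of $\Lambda^2\subseteq\SA$ is where real work remains, and both weak points are repaired by machinery the paper develops in Section~\ref{sec:omega} for other purposes. First, Blokh's trichotomy is periodic orbits / solenoidal sets / \emph{basic sets}, not ``transitive cycles'': a basic set is in general a nowhere dense set inside a cycle of intervals, so ``one constructs a dense backward branch'' by transitivity does not apply directly; the construction must be pulled back through Blokh's monotone semiconjugacy onto the transitive model, which is precisely what Lemma~\ref{lem:basic} and Corollary~\ref{cor:bs} do. Second, in the solenoidal case the restriction of $f$ to a minimal solenoidal set need not be a homeomorphism and backward branches need not be unique; the correct argument is that the accumulation set of \emph{any} backward branch inside a minimal set is nonempty, closed, and forward invariant, hence equals the whole minimal set. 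Moreover, you must also justify that a point $y\in\Lambda^2$ lying in a maximal solenoidal set $Q$ actually lies in the minimal part $S_{\Rec}$ rather than at a nonrecurrent endpoint of a component of $Q$ --- those endpoints belong to $\Lambda^1$ but admit no backward branch accumulating on them, and they are exactly the points the paper later exhibits in $\overline{\sa(y)}\setminus\sa(y)$. This follows from Blokh's model (Theorem~\ref{th:solenoid-model}): since $z\in\omega(w)\subseteq Q$, one has $\omega(z)=S_{\Rec}$, so $y\in S_{\Rec}$; but without this step your case analysis has a genuine hole precisely at the phenomenon this paper is about.
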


If $K\subset[0,1]$ is a non-degenerate closed interval such that the sets $K, f(K), \ldots, f^{k-1}(K)$ are pairwise disjoint and $f^k(K)=K$, then we call the set $M=\Orb(K)$ a \emph{cycle of intervals} and the \emph{period} of this cycle is $k$. We may also call $K$ an \emph{$n$-periodic interval}. If $f|_M$ is transitive then we call $M$ a \emph{transitive cycle} for $f$.

\section{Maximal $\omega$-limit sets and their relation to special $\alpha$-limit sets}\label{sec:omega}

%

An important property of the $\omega$-limit sets of an interval map $f$ is that each $\omega$-limit set is contained in a maximal one. These maximal $\omega$-limit sets come in three types: periodic orbits, basic sets, and solenoidal $\omega$-limit sets.

A \emph{solenoidal $\omega$-limit set} is  a maximal $\omega$-limit set which contains no periodic points. Any solenoidal $\omega$-limit set is uncountable and is contained in a nested sequence $\Orb(I_0) \supset \Orb(I_1) \supset \cdots$ of cycles of intervals with periods tending to infinity, also known as a \emph{generating sequence}~\cite[Assertion 4.2]{Blokh}. Here is the theorem relating $\sa$-limit sets to solenoidal $\omega$-limit sets; the proof is given in Section~\ref{sec:solenoidal}.

\begin{theorem}[Solenoidal Sets]\label{th:solenoidal} Let $\Orb(I_0) \supset \Orb(I_1) \supset \cdots$ be a nested sequence of cycles of intervals for the interval map $f$ with periods tending to infinity. Let $Q=\bigcap\Orb(I_n)$ and $S=Q\cap\Rec(f)$.
\begin{enumerate}
\item If $\alpha(y) \cap Q \neq \emptyset$, then $y\in Q$.
\item If $y\in Q$, then $\sa(y)\supset S$ and $\sa(y)\cap Q=S$.
\end{enumerate}
\end{theorem}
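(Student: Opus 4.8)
The plan is to first record the structural features of $Q$ that drive both parts, then treat them in turn. Write $p_n$ for the period of $\Orb(I_n)$; passing to a subsequence we may assume $p_n\mid p_{n+1}$ with $r_n:=p_{n+1}/p_n\ge 2$. Each $\Orb(I_n)$ is closed and invariant, so $Q$ is nonempty, compact and invariant, and since $f$ carries each cycle of intervals onto itself a nested-preimage (compactness) argument gives that $f|_Q$ is surjective. The generating sequence supplies the combinatorial skeleton I will lean on \cite{Blokh}: each component of $\Orb(I_n)$ contains exactly $r_n$ components of $\Orb(I_{n+1})$, cyclically permuted by $f^{p_n}$. Writing $K_n(x)$ for the component of $\Orb(I_n)$ containing $x\in Q$, this cyclic permutation (with $r_n\ge2$) immediately shows $Q$ has no periodic point: a period-$\ell$ point $p$ would satisfy $f^\ell(K_n(p))=K_n(p)$, forcing $p_n\mid\ell$ for every $n$, impossible since $p_n\to\infty$. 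Recording the component indices level by level yields the standard semiconjugacy $\pi\colon Q\to G:=\varprojlim\mathbb{Z}/p_n\mathbb{Z}$ onto the minimal odometer, with $\pi\circ f=\tau\circ\pi$.

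For Part~(1) I first claim $\alpha(y)$ is closed and satisfies $\omega(z)\subseteq\alpha(y)$ for every $z\in\alpha(y)$. The inclusion is a diagonal argument: if $w_k\to z$ with $f^k(w_k)=y$ and $f^{n_j}(z)\to u$, then the points $f^{n_j}(w_k)$, which are preimages of $y$ of order $k-n_j$, can be chosen to converge to $u$; since $y$ then has preimages of every order, these assemble into a preimage sequence witnessing $u\in\alpha(y)$. Now take $z\in\alpha(y)\cap Q$. Then $\omega(z)\subseteq\alpha(y)\cap Q$, and $\omega(z)$ is infinite because $Q$ carries no periodic orbit. For each $N$ the cycle $\Orb(I_N)$ has only finitely many endpoints, so $\omega(z)$ contains a point $s$ in the interior of its component $K_N(s)$. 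As $s\in\alpha(y)$, some preimage sequence of $y$ accumulates at $s$, hence eventually lies in $\Int K_N(s)\subseteq\Orb(I_N)$, and invariance of $\Orb(I_N)$ forces $y\in\Orb(I_N)$. Since $N$ is arbitrary, $y\in Q$. The trick is to sidestep the boundary behaviour by producing \emph{interior} approach points, which is exactly where the absence of periodic points in $Q$ (i.e.\ periods tending to infinity) is used.

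For Part~(2), note that every backward orbit branch of $y$ is a preimage sequence, so $\sa(y)\subseteq\alpha(y)$. For $\sa(y)\cap Q\subseteq S$, take $q\in\sa(y)\cap Q$ and a branch with $y_{j_i}\to q$; setting $k_i=j_{i+1}-j_i$ gives $f^{k_i}(y_{j_{i+1}})=y_{j_i}$ with both ends tending to $q$ and $k_i\to\infty$ (else $q$ would be periodic). If $q$ were non-recurrent it would lie in a nondegenerate $\pi$-fibre, and for $q$ interior to that fibre $[a,b]$ the relation $f^{k_i}(y_{j_{i+1}})=y_{j_i}$ with both terms eventually in $[a,b]$ contradicts $f^{k_i}([a,b])\cap[a,b]=\emptyset$; hence $q\in\Rec$, i.e.\ $q\in S$. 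For $S\subseteq\sa(y)$, fix $s\in S$: using surjectivity of $f|_Q$ and density of the backward odometer orbit $\{\tau^{-j}\pi(y)\}$, I build a backward branch of $y$ inside $Q$ meeting $K_m(s)$ at times $t_m\to\infty$; these branch points converge to $s$ as soon as $\operatorname{diam}K_m(s)\to0$, giving $s\in\sa(y)$. Combined with $S\subseteq Q$ and the first inclusion, this yields $\sa(y)\cap Q=S$.

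The main obstacle is the last step in each half of Part~(2): upgrading odometer-level (combinatorial) proximity to genuine convergence in $[0,1]$. The decisive point is the dichotomy that a \emph{nondegenerate} fibre $\pi^{-1}(g)=[a,b]$ is a wandering interval. Indeed its forward images lie in the fibres $\pi^{-1}(\tau^j g)$, which are pairwise disjoint because $\tau$ is a fixed-point-free minimal rotation; hence the intervals $f^j([a,b])$ are pairwise disjoint and no interior point of $[a,b]$ is recurrent. Consequently every recurrent point of $Q$ either has a degenerate fibre (so $\operatorname{diam}K_m(s)\to0$, validating the construction) or is an endpoint of such an interval. The genuinely delicate bookkeeping is precisely at these countably many endpoints: for the $\supseteq$ inclusion one must approach such an $s$ along the side where the components do shrink (or approximate within $S$), and for the $\subseteq$ inclusion one must show that a branch cannot accumulate at a non-recurrent endpoint. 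I expect this endpoint analysis — equivalently, the precise description of $Q\setminus S$ as a union of wandering intervals with recurrent data only on their boundary, which can also be extracted from Blokh's structure theory \cite{Blokh} — to be the hardest part of the argument.
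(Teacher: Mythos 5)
Your Part (1) is correct and is essentially the paper's argument: find a point of $\omega(z)\subseteq\alpha(y)\cap Q$ lying in the interior of each $\Orb(I_N)$ and pull $y$ into $\Orb(I_N)$ by invariance (Lemma~\ref{lem:inv-sa} in the paper). The genuine gap is in Part (2), where you have correctly isolated, but not solved, the actual content of the theorem. For $\sa(y)\cap Q\subseteq S$ your wandering-fibre argument only excludes accumulation points $q$ \emph{interior} to a nondegenerate fibre; a non-recurrent \emph{endpoint} $q$ of a fibre $[a,b]$ can a priori be approached by the branch from the side opposite $[a,b]$, and then disjointness of the images of $[a,b]$ says nothing. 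You defer this case, suggesting it "can be extracted from Blokh's structure theory," but the structure theory alone does not close it. The paper needs two further ingredients: (i) the Hero--Xiong inclusion $\sa(y)\subseteq\SA(f)=\Lambda^2(f)\subseteq\Lambda^1(f)$ (Theorem~\ref{th:structure}), which puts $q$ in $S_\omega$ and hence supplies a point $v$ with $q\in\omega(v)$; the wandering fibre then shows $\Orb(v)$ cannot accumulate on $q$ unless $q\in\Int(\Orb(I_j))$ for \emph{every} $j$; and (ii) once $q$ is interior to every $\Orb(I_j)$, Lemma~\ref{lem:inv-sa} traps the \emph{entire} backward branch inside $Q$, after which the pairwise disjoint fibres $\phi^{-1}(r_n)$, each containing a point of $S$ and having summable diameters, force $q\in\overline{S}=S$, a contradiction. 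Without ingredient (i) there is no mechanism in your proposal to rule out approach from outside the fibre.

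The inclusion $S\subseteq\sa(y)$ has the matching gap: your construction gives $s\in\sa(y)$ only when $\operatorname{diam}K_m(s)\to 0$, i.e.\ when the fibre of $s$ is degenerate, and your fallback "approximate within $S$" is circular --- knowing $s$ is a limit of points of $\sa(y)$ only yields $s\in\overline{\sa(y)}$, and the central theme of this very paper is that $\sa$-limit sets need not be closed. (A repair along your lines does exist but requires ideas you do not state: the accumulation set of a \emph{single} branch inside $Q$ is closed and meets every fibre, and the degenerate-fibre points are dense in $S$ because $S$ is perfect while the nondegenerate fibres are countably many disjoint intervals.) The paper sidesteps all of this: since $f(Q)=Q$, some branch of $y$ stays in $Q$ and has an accumulation point $w\in\sa(y)\cap Q$; by \cite[Lemma 1]{Hero}, $w\in\sa(y)$ implies $\omega(w)\subseteq\sa(y)$; and by \cite[Theorem 3.1]{Blokh}, $\omega(w)=S$ for \emph{every} $w\in Q$, including points of wandering intervals. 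So the paper reduces $S\subseteq\sa(y)$ to compactness plus two citations, whereas your pointwise construction stalls exactly at the recurrent endpoints of nondegenerate fibres. In short: Part (1) stands; in Part (2) both inclusions remain open precisely at fibre endpoints, which is where all the real work of this theorem lies.
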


A \emph{basic set} is an $\omega$-limit set which is infinite, maximal among $\omega$-limit sets, and contains some periodic point. If $B$ is a basic set then with respect to inclusion there is a minimal cycle of intervals $M$ which contains it, and $B$ may be characterized as the set of those points $x\in M$ such that $\overline{\Orb(U)}=M$ for every relative neighborhood $U$ of $x$ in $M$, see~\cite{Blokh}. Conversely, if $M$ is a cycle of intervals for $f$, then we will write
\begin{equation*}
B(M,f)=\{x\in M: \text{for any relative neighborhood $U$ of $x$ in $M$ we have } \overline{\Orb(U)}=M\},
\end{equation*}
and if this set is infinite, then it is a basic set~\cite{Blokh}. Here is the theorem relating $\sa$-limit sets to basic sets; the proof is given in Section~\ref{sec:basic}.

\begin{theorem}[Basic Sets]\label{th:basic} Let $f$ be an interval map and fix $y\in[0,1]$.
\begin{enumerate}
\item If $\alpha(y)$ contains an infinite subset of a basic set $B=B(M,f)$, then $y\in M$ and $\sa(y)\supset B$.
\item If $\sa(y)$ contains a preperiodic point $x$, then there is a basic set $B=B(M,f)$ such that $x\in B\subset \sa(y)$.
\end{enumerate}
\end{theorem}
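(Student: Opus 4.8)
The plan is to handle the two parts separately, extracting the membership $y\in M$ first and then constructing backward orbit branches that fill out $B$. For the conclusion $y\in M$ in the first part, I would exploit finiteness of the boundary: since $\partial M\cup\{0,1\}$ is finite while $\alpha(y)\cap B$ is infinite by hypothesis, I can choose $b\in\alpha(y)\cap B$ lying in the interior of $M$ relative to $[0,1]$. By definition of $\alpha(y)$ some preimage sequence $\{x_n\}$ of $y$ (with $f^n(x_n)=y$) has a subsequence $x_{n_i}\to b$; for large $i$ the point $x_{n_i}$ lies in the open set $\Int(M)\subset M$, and since $M$ is a cycle of intervals we have $f^{n_i}(M)=M$, whence $y=f^{n_i}(x_{n_i})\in M$. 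The remaining content of the first part I would isolate as a lemma: \emph{if $y\in M$ then $\sa(y)\supset B$.}

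To prove this lemma I fix $z\in B$ and a decreasing sequence of relative neighborhoods $U_1\supset U_2\supset\cdots$ of $z$ in $M$ with $\bigcap_j U_j=\{z\}$, and I rely on an \emph{exact} preimage property: for each $j$ and each $w\in M$ there is $s\ge 1$ with $w\in f^{s}(U_j)$, i.e. $w$ has an $s$-fold preimage inside $U_j$. Granting this, I build one backward orbit branch $y=w_0,w_1,w_2,\dots$ (with $f(w_{t+1})=w_t$) together with checkpoint times $0=t_0<t_1<\cdots$ so that $w_{t_j}\in U_j$: having reached $w_{t_j}\in M$, I apply the exact preimage property to $w=w_{t_j}$ and $U_{j+1}$ to get a backward segment of some length $s$ terminating in $U_{j+1}$, append it, and call its endpoint $w_{t_{j+1}}$. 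Then $w_{t_j}\to z$, so $z\in\sa(y)$; as $z\in B$ is arbitrary, $\sa(y)\supset B$. The exact preimage property is where Blokh's description of basic sets enters: on the generating cycle $M$ of period $k$ the return map $f^{k}$ is, after passing to the period, topologically mixing on the base interval $K$, and for interval maps mixing is equivalent to the locally eventually onto property, so $f^{s}(U_j)$ equals the relevant interval of the cycle for all large $s$ and in particular eventually contains every $w\in M$.

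For the second part I first produce the basic set abstractly and then reduce to the first part. Since every backward orbit branch is a preimage sequence, $\sa(y)\subset\alpha(y)$, so $x\in\alpha(y)$; and by Theorem~\ref{th:structure}, $x\in\sa(y)\subset\SA=\Lambda^2$, so $x\in\omega(w)$ for some $w$. Let $L$ be the maximal $\omega$-limit set containing $\omega(w)$. It is not a periodic orbit, for then $x$ would be periodic; and it is not solenoidal, because $L$ is $f$-invariant and the periodic point $f^{m}(x)$ lies in $L$, while solenoidal sets contain no periodic points. Hence $L=B(M,f)$ is a basic set and $x\in B$. To apply the first part I must see that $\alpha(y)\cap B$ is infinite. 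As $B$ is an $\omega$-limit set, $f(B)=B$, so $x$ admits a backward orbit $x=a_0,a_1,\dots$ inside $B$, which may be chosen pairwise distinct since $B$ is infinite; pulling the preimage sequence realizing $x\in\alpha(y)$ back through the $a_j$ (using that at an interior point of $B$ the map $f$ is locally onto a neighborhood of its value) shows each $a_j\in\alpha(y)$. Thus $\alpha(y)\cap B$ is infinite, and Part (1) yields $y\in M$ and $\sa(y)\supset B\ni x$.

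The routine steps are the extraction of $y\in M$ and the abstract identification of $B$ via Theorem~\ref{th:structure}; the main obstacle is the exact preimage property underlying the branch construction. In the transitive case $B=M$ it is immediate from the leo property, but when $M$ carries wandering intervals one has $B\subsetneq M$ and $f^{k}|_K$ fails to be leo. There I would pass to Blokh's monotone semiconjugacy collapsing the wandering intervals to a leo model, run the construction downstairs, and lift the backward branches through the semiconjugacy, the consistent lifting being the delicate point. A second subtlety, hidden in the second part, is the case $x\in\partial M$: if the branch realizing $x\in\sa(y)$ approaches $x$ from outside $M$ then $y\notin M$ and this particular $B$ need not satisfy $B\subset\sa(y)$. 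I would resolve this by observing that then $f^{j}(x)\in\partial M$ for all $j$, so $\Orb(x)$ is a finite subset of $\partial M$, and by locating the cycle of intervals and basic set on the side from which the branch actually approaches, so that $y$ does lie in the corresponding $M$.
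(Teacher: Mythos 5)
Both halves of your proposal contain genuine gaps, and in each case the gap has the same source: you discard the strength of the hypothesis too early and reduce to a statement that is actually false. For part (1), the lemma you isolate --- ``if $y\in M$ then $\sa(y)\supset B$'' --- is false, and so is the ``exact preimage property'' on which you base it. Mixing interval maps need not be locally eventually onto: there exist topologically mixing maps $f$ of $[0,1]$ with $f(0)=0$ and $f^{-1}(0)=\{0\}$ (build $f$ so that each interval $[2^{-n-1},2^{-n}]$ is folded linearly across $[2^{-n-2},2^{-n+1}]$; expansion gives mixing, but $0$ never acquires new preimages). This is precisely why Proposition~\ref{mixing} carries the hypothesis that the target avoid non-accessible endpoints, and why the exceptional set $E$ is in the theory at all. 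For such a map, $M=B=[0,1]$ and $y=0\in M$, but the only backward orbit branch of $0$ is constant, so $\sa(0)=\{0\}\not\supset B$; likewise no neighborhood of any $z\neq 0$ ever covers $0$ under iteration, so your exact preimage property fails for $w=0$. (The theorem itself survives because $\alpha(0)=\{0\}$ is finite.) The correct intermediate statement is the paper's Lemma~\ref{lem:basic}, which requires in addition that $\phi(y)\notin E\cup\End(M')$ in Blokh's model of Theorem~\ref{th:basic-model}; and the infiniteness of $\alpha(y)\cap B$ must then be used a \emph{second} time --- not only to place $y$ in $M$, but to produce a preimage $y'$ of $y$ lying outside the finitely many closed intervals of $\phi^{-1}(E\cup\End(M'))$ --- after which one concludes via $\sa(y)\supset\sa(y')\supset B$. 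No argument starting from $y\in M$ alone can succeed.

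For part (2) the gap is fatal in a stronger sense: after identifying $B$, your argument uses only that $x\in\alpha(y)$, $x$ is preperiodic, and $x\in B\subset\Lambda^2$ (a containment valid for every basic set by Corollary~\ref{cor:bs} and Theorem~\ref{th:structure}). But the second map of Figure~\ref{fig:prep} satisfies all of these with $y$ the right endpoint of the domain and $x$ the preperiodic left endpoint of $M$, and there $\sa(y)$ contains no basic set; so any proof using only those facts proves something false, and the broken step is ``each $a_j\in\alpha(y)$.'' Concretely, in that example $a_1$ is the fold point of $f|_M$, where $f$ has a local minimum with value $x$: images of neighborhoods of $a_1$ cover only a right-sided neighborhood of $x$, whereas the preimages of $y$ accumulate on $x$ from the left, from outside $M$ --- and in fact those preimages have no preimages at all, since they lie below $\min f$ --- so $a_1\notin\alpha(y)$. ``Locally onto'' fails at folds, and no choice of the backward orbit $(a_j)$ inside $B$ repairs this. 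The hypothesis $x\in\sa(y)$ has to be exploited beyond the identification of $B$, and that is exactly what the paper does: the backward branch of $y$ accumulates on the non-periodic point $x$ from one side $T$; the prolongation set $P^T(x)$ is then forced by Lemma~\ref{lem:prol} to be a cycle of intervals $M$; Lemma~\ref{lem:prol2} shows $x\in B(M,f)$ with $T$ a source side; and because the branch approaches $x$ from that source side, one finds a preimage $y'$ of $y$ with $\phi(y')\notin E\cup\End(M')$, to which Lemma~\ref{lem:basic} applies.
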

The sharpness of the second claim of Theorem~\ref{th:basic} is illustrated in Figure ~\ref{fig:prep}. The first map has two basic sets $B([0,1],f)$ and $B(M,f)$, where $M$ is the invariant middle interval. The $\sa$-limit set of 1 does not contain the basic set $B(M,f)$ although it includes the left endpoint of $M$, which is preperiodic. The second map shows that we cannot weaken the assumption to $\alpha(y)$. The $\alpha$-limit set of 1 includes the preperiodic endpoint of $M$ but $\sa(1)$ does not contain any basic set.

\begin{figure}[htb!!]

\begin{tikzpicture}
\draw (0,0) rectangle (3,3);
\draw (1,1) rectangle (2,2);
\draw (0,0) -- (3,3);
\clip (0,0) rectangle (3,3);
\draw[ultra thick] (0,0) -- (0.5,3) -- (1,2) -- (1.5,1) -- (2,2) -- (2.5,3) -- (3,0);
\end{tikzpicture}
\hspace{.2\textwidth}
\begin{tikzpicture}
\draw (0,0) rectangle (3,3);
\draw (1,1) rectangle (2,2);
\draw (0,0) -- (3,3);
\draw[ultra thick] (0,3) -- (1,2) -- (1.5,1) -- (2,2) -- (2.5,3) -- (3,3);
\end{tikzpicture}
\caption{A map where the $\sa$-limit set of 1 (respectively, the $\alpha$-limit set of 1) contains a preperiodic point from a basic set $B(M,f)$, but $\sa(1)\not\supset B(M,f)$.}\label{fig:prep}
\end{figure}
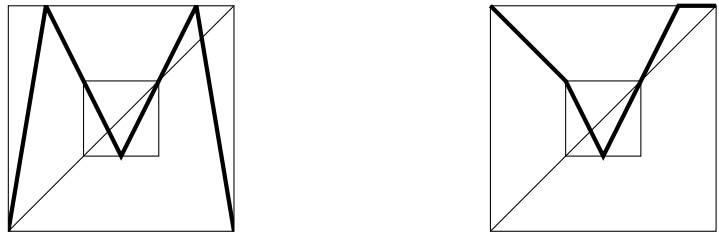

Periodic orbits may also be related to $\sa$-limit sets. The following result is one of the main theorems in~\cite{KMS}. Moreover, it holds for all periodic orbits of an interval map, even those which are not maximal $\omega$-limit sets.

\begin{theorem}\cite[Theorem 3.2]{KMS}\label{th:periodic}
Let $P$ be a periodic orbit for the interval map $f$. If $\alpha(y)\cap P\neq\emptyset$, then $\sa(y)\supset P$.
\end{theorem}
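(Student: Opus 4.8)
The plan is to reduce the statement to a single convergence claim at one fixed point and then bootstrap back to all of $P$. Write $P=\{p_0,\dots,p_{k-1}\}$ with $f(p_i)=p_{i+1}$ (indices taken mod $k$), and assume $\alpha(y)\cap P\neq\emptyset$. First I observe that it suffices to produce \emph{one} backward orbit branch $\{b_n\}$ of $y$ with a subsequence $b_{n_i}\to q$ for a single point $q\in P$: since $b_{n_i-j}=f^j(b_{n_i})\to f^j(q)$ for every fixed $j\ge 0$ and $n_i-j\to\infty$, the same branch accumulates at every point of the orbit of $q$, i.e. at all of $P$. Next, passing to $g=f^k$ turns each point of $P$ into a fixed point of $g$; from the given preimage sequence $f^{M_j}(w_j)=y$ with $w_j\to p\in P$, writing $M_j=kq_j+r_j$ and passing to a subsequence on which $r_j$ is constant produces a $g$-preimage sequence $f^{r}(w_j)$ of $y$, at $g$-levels $q_j\to\infty$, converging to the fixed point $\tilde p=f^{r}(p)\in P$ of $g$; and any $g$-backward branch accumulating at $\tilde p$ refines (by inserting intermediate $f$-preimages) to an $f$-backward branch with the same accumulation. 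Renaming $g$ as $f$ and $\tilde p$ as $p$, it remains to treat: $p$ is a fixed point of the interval map $f$, there are $w_j\to p$ with $f^{M_j}(w_j)=y$ and $M_j\to\infty$, and I must build a backward orbit branch of $y$ accumulating at $p$.

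The engine is an elementary covering estimate. Let $I$ be the closed interval bounded by $p$ and $y$. For any preimage $w$ of $y$ at level $M$ the image $f^{M}([p,w])$ is an interval containing both $f^{M}(p)=p$ and $f^{M}(w)=y$, hence $f^{M}([p,w])\supseteq I$. Consequently every point of $I$ has an $f^{M}$-preimage inside $[p,w]$, that is, within $|w-p|$ of $p$; choosing $w=w_j$ with $|w_j-p|$ small pulls any target in $I$ back arbitrarily close to $p$.

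Suppose first that infinitely many $w_j$ lie in $I$ (the side of $y$); pass to such a subsequence with $w_j\to p$. Then the intervals $J_j:=[p,w_j]\subseteq I$ form a backward chain of coverings, $f^{M_j}(J_j)\supseteq I\supseteq J_{j-1}$. Following the chain I choose $b^{(1)}\in J_1$ with $f^{M_1}(b^{(1)})=y$, and inductively $b^{(j)}\in J_j$ with $f^{M_j}(b^{(j)})=b^{(j-1)}$, each step being possible because $f^{M_j}(J_j)\supseteq J_{j-1}\ni b^{(j-1)}$. Concatenating the finite orbit segments realising these equalities yields an infinite backward orbit branch of $y$, and since $b^{(j)}\in[p,w_j]$ we get $b^{(j)}\to p$, so $p\in\sa(y)$; by the first paragraph this settles the case.

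The main obstacle is the case where the $w_j$ approach $p$ only from the side opposite to $y$. Then each $J_j=[p,w_j]$ lies on the far side of $p$; its image still covers $I$ but need not cover the far-side intervals $J_{j-1}$, so the covering chain can break, and a branch accumulating at $p$ may be forced to alternate sides of $p$ (as already happens for the model $x\mapsto-2x$ at its repelling fixed point). To push the argument through I would first try to relocate to the $y$-side: using that $f^{i}(w_j)\to p$ for each fixed $i$, the forward orbit of $w_j$ remains in any prescribed neighbourhood of $p$ for a number of steps tending to infinity, and by recording the first crossing of $p$ one hopes to extract preimages of $y$ lying on the $y$-side arbitrarily near $p$, reducing matters to the chain of the previous paragraph. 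Failing that, one builds the alternating chain directly, which requires the quantitative estimate that the far-side reach of $f^{M_j}([p,w_j])$ eventually exceeds $|w_{j-1}-p|$. Establishing this side-control, and thereby ruling out that the constructed branch converges to a fixed point of some iterate other than $p$, is the delicate heart of the proof; the remaining steps are routine applications of the intermediate value theorem.
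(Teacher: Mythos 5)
First, note that this paper does not prove the statement at all: it is imported as \cite[Theorem 3.2]{KMS}, so your attempt has to stand on its own. Its first half does: the reduction from $P$ to a single point (pushing a convergent subsequence forward by continuity), the reduction to a fixed point via $g=f^k$ and a constant residue class, and the covering chain $f^{M_j}([p,w_j])\supseteq I\supseteq[p,w_{j-1}]$ in the case where the preimages $w_j$ accumulate on $p$ from the $y$-side are all correct. But the remaining case is a genuine gap, explicitly left open by you, and it is precisely where the difficulty of the theorem lives. Your first fallback (relocating to the $y$-side by ``recording the first crossing'') cannot work in general, because there need not exist \emph{any} preimages of $y$ on the $y$-side near $p$: take $f$ on $[-1,1]$ with $f(x)=-3x-\tfrac52$ on $[-1,-\tfrac12]$, $f(x)=2x$ on $[-\tfrac12,0]$, $f(x)=\tfrac{x}{2}$ on $[0,1]$, and $p=0$, $y=\tfrac12$. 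Every preimage of $y$ other than $\tfrac12$ and $1$ is negative, yet $p\in\alpha(y)$ (via $-2^{-k}\to 0$) and the conclusion $p\in\sa(y)$ holds via a branch approaching $0$ strictly from the left. Your second fallback hinges on the estimate that the far-side reach of $f^{M_j}([p,w_j])$ exceeds $|w_{j-1}-p|$, which is neither proved nor the right quantity: at the full time $M_j$ one only controls the $y$-side of the image, since $f^{M_j}(w_j)=y$.

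The case can be closed, but it needs an idea your proposal does not contain: intermediate \emph{exit times} rather than the full levels $M_j$. Either preimages of $y$ accumulate on $p$ from the $y$-side (then your first chain applies to those preimages, whose levels are automatically unbounded), or there is $\epsilon>0$ such that $(p,p+\epsilon)$ contains no preimage of $y$ at all. In the latter case choose $\delta\in(0,\epsilon)$ with $f([p-\delta,p])\subset(p-\epsilon,p+\epsilon)$. Then any preimage of $y$ lying in $(p-\delta,p)$ maps to another preimage of $y$ lying again in $(p-\epsilon,p)$: its image cannot be $p$ and cannot lie in $(p,p+\epsilon)$. Consequently the orbit of each $w_j$ must leave $(p-\delta,p)$ \emph{downward}, at some first exit time $t_j\leq M_j$, landing in $(p-\epsilon,p-\delta]$; this gives the covering $f^{t_j}([w_j,p])\supseteq[p-\delta,p]$. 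Thus every one-sided interval $[p-\eta,p]$ covers, at a suitable time, the \emph{fixed} interval $[p-\delta,p]$, which contains a preimage of $y$ and contains all the smaller intervals $[w_{j'},p]$; these coverings chain together exactly as in your first case and produce a backward branch of $y$ with points in $[w_{j_k},p]$ for a sequence $w_{j_k}\to p$, whence $p\in\sa(y)$. Without this (or an equivalent) argument for the far-side case, the proof is incomplete.
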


One additional observation is appropriate in this section. Unlike $\omega$-limit sets, the $\sa$-limit sets of an interval map need not be contained in maximal ones.

\begin{example}\label{ex:no-max-sa}
Fix two sequences of real numbers $1=a_1 > b_1 > a_2 > b_2 > \cdots$ both decreasing to $0$ and consider the ``connect-the-dots'' map $f:[0,1]\to[0,1]$ where
\begin{equation*}
f(0)=0,\, f(a_i)=a_i,\, f(b_i)=a_{i+2}, \quad (i=1,2,\ldots)
\end{equation*}
and $f$ is linear on all the intervals $[a_{i+1},b_i]$, $[b_i,a_i]$. The graph of such a function $f$ is shown in Figure~\ref{fig:no-max-sa}. The $\sa$-limit sets of this map are
\begin{equation*}
\sa(x)=\{a_1,\ldots,a_n\} \text{ for }x\in(a_{n+1},a_n] \text{ and } \sa(0)=\{0\}.
\end{equation*}
In particular, we get a strictly increasing sequence of $\sa$-limit sets and no $\sa$-limit set containing them all.

\begin{figure}[htb!!]
\includegraphics[width=.3\textwidth]{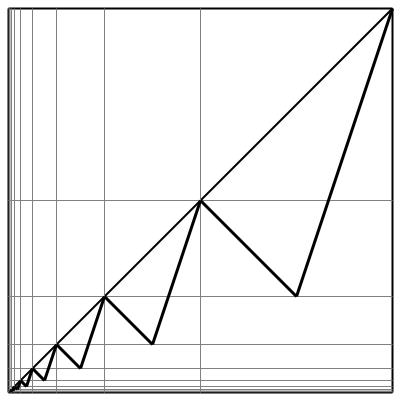}
\caption{A map with an increasing nested sequence of $\sa$-limit sets not contained in any maximal one.}\label{fig:no-max-sa}
\end{figure}
\end{example}

\subsection{Solenoidal sets}\label{sec:solenoidal}\strut\\
\indent This section is devoted to the proof of Theorem~\ref{th:solenoidal}.

We start with a broader definition of solenoidal sets, taken from~\cite{Blokh}. A \emph{generating sequence} is any nested sequence of cycles of intervals $\Orb(I_0)\supset\Orb(I_1)\supset\cdots$ for $f$ with periods tending to infinity. The intersection $Q=\bigcap_n\Orb(I_n)$ is automatically closed and \emph{strongly invariant}, i.e. $f(Q)=Q$, and any closed and strongly invariant subset $S$ of $Q$ (including $Q$ itself) will be called a \emph{solenoidal set}. Two examples described in~\cite{Blokh} which we will need later are
\begin{enumerate}
\item the set of all $\omega$-limit points in $Q$, denoted $S_\omega=S_\omega(Q)=Q\cap\Lambda^{1}(f)$, and
\item the set of all recurrent points in $Q$, denoted $S_{\Rec}=S_{\Rec}(Q)=Q\cap\Rec(f)$.
\end{enumerate}

Blokh showed that $Q$ contains a perfect set $S$ such that $S=\omega(x)$ for all $x\in Q$~\cite[Theorem 3.1]{Blokh}. Clearly $S=S_{\Rec}$. We refer to $S_{\Rec}$ as a \emph{minimal solenoidal set} both because it is the smallest solenoidal set in $Q$ with respect to inclusion, and because the mapping $f|_{S_{\Rec}}$ is minimal, i.e. all forward orbits are dense.

If $\omega(x)$ is a maximal $\omega$-limit set for $f$ and contains no periodic points (what Sharkovskii calls a maximal $\omega$-limit set of genus 1), then it is in fact a solenoidal set~\cite[Assertion 4.2]{Blokh}. Thus it has a generating sequence $\Orb(I_0)\supset\Orb(I_1)\supset \cdots$ of cycles of intervals and belongs to their intersection $Q$. If $Q'=\bigcap_n\Orb(I'_n)$ is formed from another generating sequence for $f$, then it is well known (and an easy exercise) that $Q$ and $Q'$ are either identical or disjoint. This means that given any solenoidal set $S$, there is a unique maximal solenoidal set $Q$ which contains it (so $Q$ is uniquely determined, even if the generating sequence is not).

One can use a translation in a zero-dimensional infinite group as a model for the map $f$ acting on a solenoidal set $Q=\bigcap \Orb(I_j)$. Let $D=\{m_j\}_{j=0}^{\infty}$ where $m_j$ is the period of $\Orb(I_j)$ and let $H(D)=\{(r_0,r_1,\ldots):r_{j+1}=r_j \: (\mathrm{mod}\,\, m_j),\text{ for all }j\geq 0\}$ where $r_j$ is an element of the group of residues modulo $m_j$, for every $j$. Denote by $\tau$ the translation in $H(D)$ by the element $(1,1,\ldots)$.

\begin{theorem}\cite[Theorem 3.1]{Blokh}\label{th:solenoid-model}
There exists a semiconjugacy $\phi:Q\rightarrow H(D)$ between $f|_Q$ and $\tau$ such that, for every $r\in H(D)$, the set $J=\phi^{-1}(r)$ is a connected component of $Q$ and it is either a singleton $J=\{a\}, a\in S_{\Rec}$, or an interval $J=[a,b],\:\emptyset\neq S_{\Rec}\cap J\subset S_{\omega}\cap J\subset\{a,b\}.$
\end{theorem}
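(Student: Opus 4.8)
The plan is to build $\phi$ by \emph{coding} points of $Q$ against the generating sequence, and then to read off the fiber structure from the dynamics of the odometer $\tau$. First I would normalize the generating sequence: since $\Orb(I_{j+1})\subset\Orb(I_j)$, the connected base $I_{j+1}$ lies in a single interval of the cycle $\Orb(I_j)$, so after relabelling the cycles I may assume the bases are nested, $I_{j+1}\subseteq I_j$. Writing $I_j^i=f^i(I_j)$ for the $i$-th interval of the $j$-th cycle, the cyclic action $f(I_j^i)=I_j^{(i+1)\bmod m_j}$ together with $I_{j+1}\subseteq I_j$ forces $m_j\mid m_{j+1}$ and $I_{j+1}^i\subseteq I_j^{i\bmod m_j}$. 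For $x\in Q$ let $r_j(x)$ be the unique index with $x\in I_j^{r_j(x)}$; the last containment gives $r_j(x)\equiv r_{j+1}(x)\pmod{m_j}$, so $\phi(x)=(r_0(x),r_1(x),\dots)$ is a well-defined map $Q\to H(D)$.

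Next I would verify the formal properties. The coding cylinder $\{x\in Q: r_j(x)=c\}=Q\cap I_j^{c}$ is relatively clopen in $Q$, because the $m_j$ intervals of the cycle $\Orb(I_j)$ are pairwise disjoint compact sets and hence lie at positive distance; this makes $\phi$ continuous. Given $r\in H(D)$, the compatibility relation makes $I_0^{r_0}\supseteq I_1^{r_1}\supseteq\cdots$ a nested sequence of closed intervals, and any point of the nonempty intersection lies in every $\Orb(I_j)$, hence in $Q$, and maps to $r$; so $\phi$ is surjective. Since $f(I_j^{r_j})=I_j^{(r_j+1)\bmod m_j}$ we get $r_j(f(x))\equiv r_j(x)+1\pmod{m_j}$, i.e. $\phi\circ f=\tau\circ\phi$. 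Moreover the fiber is exactly $\phi^{-1}(r)=\bigcap_j I_j^{r_j}$, which by the previous line lies in $Q$ and is therefore either a single point or a closed interval $[a,b]\subseteq Q$. As $H(D)$ is totally disconnected, every connected subset of $Q$ maps to a point and so lies in one fiber; since each fiber is itself connected, the fibers are precisely the connected components of $Q$.

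It remains to pin down the recurrent and $\omega$-limit points in a fiber $J=\phi^{-1}(r)$, and this is the heart of the matter. That every fiber meets $S_{\Rec}$ follows from minimality of the odometer (a standard fact, since $(1,1,\dots)$ generates a dense subgroup of the compact monothetic group $H(D)$): $\phi(S_{\Rec})$ is a nonempty closed $\tau$-invariant subset of the minimal system $(H(D),\tau)$, hence all of $H(D)$, so $\phi|_{S_{\Rec}}$ is already onto. In particular a singleton fiber $\{a\}$ satisfies $a\in S_{\Rec}$. For a nondegenerate fiber $J=[a,b]$ I claim $\Int(J)$ contains no point of $S_{\Rec}$: the translation $\tau$ has infinite order (if $\tau^k r=r$ then $m_j\mid k$ for all $j$, forcing $k=0$ since $m_j\to\infty$), so $f^k(J)\subseteq\phi^{-1}(\tau^k r)$ is disjoint from $J$ for every $k\geq1$; thus the orbit of any $c\in\Int(J)$ leaves $J$ at once and never returns to the neighborhood $(a,b)$ of $c$, contradicting recurrence. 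Finally no interior point lies in $S_\omega$ either: if $c\in\Int(J)\cap\omega(x)$, then since $(a,b)\subseteq Q$ is open, some iterate $f^{n_0}(x)$ lands in $(a,b)\subseteq Q$, and the forward invariance $f(Q)\subseteq Q$ together with the recalled fact $\omega(z)=S_{\Rec}$ for $z\in Q$ gives $c\in\omega(x)=\omega(f^{n_0}(x))=S_{\Rec}$, contradicting the previous claim. Hence $S_\omega\cap J\subseteq\{a,b\}$, and combined with the trivial inclusions $\emptyset\neq S_{\Rec}\cap J\subseteq S_\omega\cap J$ this is exactly the asserted dichotomy.

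The routine parts are the construction and the formal checks; the genuine obstacle is the last paragraph, and within it the exclusion of interior $\omega$-limit points. The two ingredients that make it work are the aperiodicity of $\tau$ (which kills interior recurrent points) and the strong invariance of $Q$ (which reduces interior $\omega$-limit points to recurrent ones via $\omega(z)=S_{\Rec}$); isolating these as the crux, rather than attempting a direct topological estimate on the wandering intervals $f^k(J)$, is what keeps the argument short.
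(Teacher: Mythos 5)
The paper never proves this statement: it is imported wholesale from Blokh (\cite[Theorem 3.1]{Blokh}), so there is no internal proof to compare against, and your attempt must be judged on its own. Your skeleton is the standard (and correct) one: normalize the generating sequence so the bases nest, code each $x\in Q$ by its component indices $r_j(x)$, check continuity via the clopen cylinders $Q\cap I_j^{c}$, surjectivity via nested compact intervals, equivariance from $f(I_j^i)=I_j^{(i+1)\bmod m_j}$, identify the fibers $\phi^{-1}(r)=\bigcap_j I_j^{r_j}$ with the connected components of $Q$ using total disconnectedness of $H(D)$, and kill interior recurrent points by aperiodicity of $\tau$. All of that is sound.

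Two steps, however, need repair. First, you assert that $\phi(S_{\Rec})$ is a \emph{closed} $\tau$-invariant set and conclude from minimality of the odometer that $\phi|_{S_{\Rec}}$ is onto. But $\Rec(f)$ need not be closed, and the closedness of $S_{\Rec}=Q\cap\Rec(f)$ is precisely part of what Blokh's theorem delivers (a posteriori it is a minimal set), so it cannot be assumed here. The fix is to run your minimality argument on a minimal subset $M\subseteq Q$, which exists by Zorn's lemma since $Q$ is compact and $f(Q)\subseteq Q$: every point of a minimal set is recurrent, so $M\subseteq S_{\Rec}$, and $\phi(M)$ genuinely is compact, nonempty and $\tau$-invariant, hence equals $H(D)$. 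Second, to exclude interior $\omega$-limit points you invoke ``the recalled fact $\omega(z)=S_{\Rec}$ for $z\in Q$.'' That fact is another clause of the very same Theorem 3.1 of Blokh being proved, so as a self-contained argument this is circular. Fortunately the detour is unnecessary, because your aperiodicity argument finishes the job directly: if $c\in(a,b)\cap\omega(x)$, the forward orbit of $x$ enters the open set $(a,b)$ at least twice, say at times $n_0<n_1$; but $f^{n_0}(x)\in(a,b)\subseteq J=\phi^{-1}(r)$ forces $f^{n_1}(x)\in\phi^{-1}\bigl(\tau^{\,n_1-n_0}r\bigr)$, which is disjoint from $J$ since $\tau^{\,n_1-n_0}r\neq r$, a contradiction. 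With these two repairs (both using only tools already present in your write-up), the proof is complete.
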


%

One lemma which we will need several times throughout the paper is the following

\begin{lemma}\label{lem:inv-sa}
If $A$ is invariant for $f$ and $\alpha(x)\cap\Int(A)\neq\emptyset$, then $x\in A$. In particular, if $\sa(x)\cap\Int(A)\neq\emptyset$, then $x\in A$.
\end{lemma}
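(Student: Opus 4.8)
The plan is to unwind the definition of a preimage sequence and play it directly against forward invariance; the crucial observation is that preimage sequences carry the relation $f^n(x_n)=x$ exactly, which is what pairs cleanly with the hypothesis $f(A)\subset A$.

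First I would take the hypothesis to mean that some preimage sequence $\{x_n\}_{n=0}^\infty$ of $x$ (so $f^n(x_n)=x$ for all $n$) has an accumulation point $y\in\Int(A)$, and extract a subsequence $x_{n_i}\to y$. Since $\Int(A)$ is open and $x_{n_i}\to y\in\Int(A)$, all but finitely many terms satisfy $x_{n_i}\in\Int(A)\subset A$; fix any such index $i$.

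Next I would invoke invariance. From $f(A)\subset A$ one gets $f^k(A)\subset A$ for every $k\geq 0$ by an immediate induction. Applying this with $k=n_i$ to the point $x_{n_i}\in A$ yields
\begin{equation*}
x=f^{n_i}(x_{n_i})\in f^{n_i}(A)\subset A,
\end{equation*}
which is exactly the desired conclusion $x\in A$. No compactness or continuity beyond the bare definitions is needed here; the whole argument rests on the defining identity of a preimage sequence.

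For the ``in particular'' clause I would observe that every backward orbit branch is a preimage sequence: if $x_0=x$ and $f(x_{n+1})=x_n$ for all $n$, then telescoping gives $f^n(x_n)=f^{n-1}(x_{n-1})=\cdots=x_0=x$. Consequently every accumulation point of a backward orbit branch is an accumulation point of a preimage sequence, so $\sa(x)\subset\alpha(x)$, and the hypothesis $\sa(x)\cap\Int(A)\neq\emptyset$ reduces to the case already handled. I do not anticipate any genuine obstacle; the only point requiring care is to use $\Int(A)$ (rather than $A$) precisely where it is needed, namely to guarantee that the convergent subsequence eventually enters $A$.
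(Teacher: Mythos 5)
Your proof is correct and follows essentially the same route as the paper's: both locate a preimage $x_{n_i}$ of $x$ inside $A$ (you via openness of $\Int(A)$ and convergence of a subsequence, the paper via a neighborhood $U\subset A$ of a point of $\alpha(x)\cap\Int(A)$) and then conclude $x=f^{n_i}(x_{n_i})\in A$ by forward invariance, with the ``in particular'' clause reduced to the inclusion $\sa(x)\subset\alpha(x)$ in both cases. No gaps.
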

\begin{proof}
Choose $a\in\alpha(x)\cap\Int(A)$ and choose a neighborhood $U$ of $a$ contained in $A$. There is $n\in\mathbb{N}$ and a point $x_{-n}\in U$ such that $f^n(x_{-n})=x$. Since $U\subset A$ and $A$ is invariant, $x$ must belong to $A$. We get the same conclusion when $\sa(x)\cap\Int(A)\neq\emptyset$, because $\sa(x)\subset\alpha(x)$.
\end{proof}

Now we are ready to give the proof of Theorem~\ref{th:solenoidal}.

\begin{proof}[Proof of Theorem~\ref{th:solenoidal}]
(1):  Fix $z\in Q\cap\alpha(y)$ and let $S=S_{\Rec}=Q\cap\Rec(f)$ be the minimal solenoidal set in $Q$. Then by~\cite[Theorem 3.1]{Blokh} $S=\omega(z)$ and since $\alpha(y)$ is a closed invariant set it must contain $S$. In particular, $\alpha(y)$ contains infinitely many points from each cycle of intervals $\Orb(I_n)$, and so by Lemma~\ref{lem:inv-sa} $y\in \Orb(I_n)$, for all $n$. Therefore $y\in Q$.

(2): Fix $y\in Q$. Since $f(Q)=Q$ we can choose a backward orbit branch for $y$ which never leaves $Q$. Therefore it has an accumulation point in $Q$, and so $\sa(y)\cap Q\neq\emptyset$. Let $w\in \sa(y)\cap Q$. By~\cite[Theorem 3.1]{Blokh}, $\omega(w)$ is the minimal solenoidal set $S$. According to~\cite[Lemma 1]{Hero}, if $\sa(y)$ contains a point, then it contains the whole $\omega$-limit set of that point as well. Therefore $\sa(y) \supset S$ and $\sa(y)\cap Q\supset S$. To finish the proof it is enough to show the opposite inclusion $\sa(y)\cap Q\subset S$.

We will assume otherwise. Suppose there is a point $z\in \sa(y)\cap (Q\setminus S)$. Let $\phi(z)=r$, where $\phi$ is defined in Theorem $\ref{th:solenoid-model}$. Since $z\in\sa(y)\subset \SA(f)\subset \Lambda^1(f)$, we can assume $z\in \Lambda^1(f)\cap(Q\setminus S)=S_{\omega}\setminus S$. By Theorem $\ref{th:solenoid-model}$, $\phi^{-1}(r)$ has to be an interval and $z$ has to be one of its endpoints, say, the right endpoint, and $\phi^{-1}(r)=[x,z]$, $x\in S.$ Since $\phi$ is a semiconjugacy we have $f^i([x,z])\subset\phi^{-1}(\tau^i(r))$ for all $i\geq0$. But the intervals $\phi^{-1}(\tau^i(r))$ are pairwise disjoint. This shows that $[x,z]$ is a wandering interval.

{\bf Claim} $z\in\Int (\Orb(I_j)),$ for every $j\geq 0$.\\
We will assume otherwise. Let $K$ be the connected component of $\Orb(I_N)$, for some $N\geq0$, where $z$ is an endpoint of $K$. Let $v$ be a point such that $z\in\omega(v)$. By \cite[Theorem 3.1]{Blokh}, $S=\omega(z)\subset\omega(v)$, we have $\omega(v)\cap\Orb(I_N)$ infinite and necessarily $\Orb(v)\cap\Int(\Orb(I_N))\neq\emptyset$. This implies $f^k(v)\in\Orb(I_N)$ for all sufficiently large $k$. It follows that $\Orb(v)$ accumulates on $z$ from the interior of $K$ and we can find $k>0$ such that $f^k(v)\in(x,z)$. But $[x,z]$ is a wandering interval, so $\Orb(v)$ cannot accumulate on $z$ which contradicts $z\in\omega(v)$.\\

Let $\{y_n\}_{n=0}^\infty$ be a backward orbit branch of $y$ with a subsequence $\{y_{n_i}\}_{i=0}^\infty$ such that $\lim_{i\to\infty} y_{n_i}=z$. Since $z\in\Int(\Orb I_j)$ it follows from Lemma~\ref{lem:inv-sa} that $y_n\in\Orb(I_j)$ for all $j,n \geq 0$. Therefore $\{y_n\}_{n=0}^\infty \subset Q$. For every $n\geq 1$, denote $\phi(y_n)=r_n$. Then by Theorem $\ref{th:solenoid-model}$, $\phi^{-1}(r_n)$ are connected, pairwise disjoint sets, each containing an element $s_n\in S$. Since $y_n\in\phi^{-1}(r_n)$, we have $\lim_{i\to\infty} s_{n_i}=z$. But $S$ is a closed set and $z\notin S$, which is impossible. Therefore $\sa(y)\cap (Q\setminus S)=\emptyset$ and $\sa(y)\cap Q\subset S$.
\end{proof}

\begin{corollary}\label{cor:at-most-1}
A $\sa$-limit set contains at most one solenoidal set.
\end{corollary}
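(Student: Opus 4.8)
The plan is to show that any nonempty solenoidal set contained in a given $\sa$-limit set must in fact coincide with the minimal solenoidal set of a single maximal solenoidal set $Q$, so that no two distinct solenoidal sets can fit inside $\sa(y)$. Throughout I read ``solenoidal set'' as a nonempty closed strongly invariant subset of some $Q$, since the empty set would otherwise trivialize the count.

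First I would fix $y$ and suppose $S\subset\sa(y)$ is a nonempty solenoidal set. By the discussion preceding Theorem~\ref{th:solenoid-model}, $S$ is contained in a unique maximal solenoidal set $Q=\bigcap_n\Orb(I_n)$. Since $S$ is nonempty and $\sa(y)\subset\alpha(y)$, we have $\alpha(y)\cap Q\supset S\neq\emptyset$, so part (1) of Theorem~\ref{th:solenoidal} gives $y\in Q$. Part (2) then yields $\sa(y)\cap Q=S_{\Rec}$, the minimal solenoidal set in $Q$. Because $S\subset\sa(y)$ and $S\subset Q$, it follows that $S\subset S_{\Rec}$.

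Next I would upgrade this inclusion to equality. Recall from the discussion after Theorem~\ref{th:solenoid-model} that $f|_{S_{\Rec}}$ is minimal. Since $S$ is a nonempty, closed, strongly invariant (hence forward invariant) subset of $S_{\Rec}$, minimality forces $S=S_{\Rec}$: for any $x\in S$ we have $\omega(x)\subset S$ because $S$ is closed and forward invariant, while $\omega(x)=S_{\Rec}$ by minimality, whence $S_{\Rec}\subset S$. Thus every nonempty solenoidal set contained in $\sa(y)$ equals $S_{\Rec}(Q)$ for the maximal solenoidal set $Q$ determined above.

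Finally I would rule out the possibility of solenoidal sets coming from two different maximal solenoidal sets $Q\neq Q'$. If $\sa(y)$ contained nonempty solenoidal sets $S\subset Q$ and $S'\subset Q'$, the argument above gives $y\in Q$ and $y\in Q'$; since distinct maximal solenoidal sets are disjoint, $Q=Q'$. Hence $S=S'=S_{\Rec}(Q)$, and $\sa(y)$ contains at most one solenoidal set. I expect the only delicate point to be the minimality step: one must use that a solenoidal set is nonempty and strongly invariant, so that it is a nonempty closed invariant subset of the minimal system $f|_{S_{\Rec}}$ and must therefore fill it; everything else is a direct application of Theorem~\ref{th:solenoidal}.
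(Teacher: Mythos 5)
Your proof is correct and follows essentially the same route as the paper: apply Theorem~\ref{th:solenoidal} to place $y$ in the maximal solenoidal set $Q$, use the dichotomy that maximal solenoidal sets are identical or disjoint, and identify the only possible solenoidal set inside $\sa(y)$ as $S_{\Rec}(Q)$. Your explicit minimality argument upgrading $S\subset S_{\Rec}$ to equality is a nice filling-in of a step the paper leaves implicit (it invokes instead that $S_{\Rec}$ is the smallest solenoidal set in $Q$), but the approach is the same.
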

\begin{proof}
Let $\Orb(I_0) \supset \Orb(I_1) \supset \cdots$ and $\Orb(I'_0) \supset \Orb(I'_1) \supset \cdots$ be nested sequences of cycles of intervals generating two solenoidal sets $Q=\bigcap\Orb(I_n)$ and $Q'=\bigcap_n\Orb(I'_n)$. If $\sa(y)\cap Q\neq \emptyset$ and $\sa(y)\cap Q'\neq \emptyset$ then, by Theorem $\ref{th:solenoidal}$, $y\in Q\cap Q'$. Since two solenoidal sets $Q$ and $Q'$ are either identical or disjoint we have $Q=Q'$. Then the only solenoidal set contained in $\sa(y)$ is $S=Q\cap\Rec(f)$.
\end{proof}

\subsection{Basic sets}\label{sec:basic}\strut\\\indent
This section is devoted to the proof of Theorem~\ref{th:basic}.\\
Let $f$ be a continuous map acting on an interval $I$. We say that an endpoint $y$ of $I$ is \emph{accessible} if there is $x\in\Int (I)$ and $n\in\mathbb{N}$ such that $f^n(x)=y$. If $y$ is not accessible, then it is called \emph{non-accessible}. The following Proposition is derived from~\cite[Proposition 2.8]{Ruette}.
\begin{proposition}\cite{Ruette}\label{mixing}
Let $f$ be a topologically mixing map acting on an interval $I$. Than for every $\epsilon>0$ and every $x\in I$ such that the interval $[x-\epsilon,x+\epsilon]\subset I$ does not contain a non-accessible endpoint and for every non degenerate interval $U\subset I$, there exists an integer $N$ such that $f^n(U)\supset [x-\epsilon,x+\epsilon]$, for all $n\geq N$.
\end{proposition}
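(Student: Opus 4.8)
The plan is to exploit the fact that $f^n(U)$ is always a subinterval of $I$, so that to prove $f^n(U)\supset K$ for $K=[x-\epsilon,x+\epsilon]$ it suffices to control the two endpoints of $K$ separately. Writing $I=[c,d]$, if for all large $n$ the connected set $f^n(U)$ meets both $[c,x-\epsilon]$ and $[x+\epsilon,d]$ (that is, contains a point $\le x-\epsilon$ and a point $\ge x+\epsilon$), then by connectedness $f^n(U)\supset K$. I may assume $U$ is a nonempty open subinterval of $\Int(I)$, since every nondegenerate interval contains one and shrinking $U$ only weakens the conclusion. I will also use repeatedly that mixing forbids $f^k(V)$ from being a single point for any nondegenerate interval $V$: otherwise the forward images $f^{k+j}(\Int V)$ would all be singletons and could not meet every open set.

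For an endpoint of $K$ lying in $\Int(I)$, say $x-\epsilon>c$, the left reach is immediate. Pick a nonempty open interval $U'\subset(c,x-\epsilon)$ and apply topological mixing to $U$ and $U'$ to obtain $N_1$ with $f^n(U)\cap U'\neq\emptyset$ for all $n\geq N_1$; then $f^n(U)$ contains a point strictly below $x-\epsilon$ whenever $n\geq N_1$. The symmetric argument handles the right endpoint when $x+\epsilon<d$. As a byproduct this already shows that any closed interval $W=[s,t]\subset\Int(I)$ is eventually covered: for all large $n$ the interval $f^n(U)$ contains a point $<s$ and a point $>t$, hence $f^n(U)\supset W$. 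I will reuse this fact below.

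The main obstacle is an endpoint of $K$ that coincides with an endpoint of $I$; by hypothesis such an endpoint is accessible. Suppose $x-\epsilon=c$. Accessibility provides $w\in\Int(I)$ and $k\in\N$ with $f^k(w)=c$. Since $c=\min I$, the value $c$ is the global minimum of $f^k$, attained at the interior point $w$; choosing a small closed interval $W\subset\Int(I)$ around $w$, the image $f^k(W)$ is a compact subinterval of $I$ with left endpoint $c$, and it is nondegenerate because mixing prevents $f^k$ from being constant on $W$. Thus $f^k(W)\supset[c,c+\eta]$ for some $\eta>0$. By the interior-interval case above there is $N_2$ with $f^n(U)\supset W$ for all $n\geq N_2$, whence $f^{n+k}(U)\supset f^k(W)\supset[c,c+\eta]$ for all such $n$. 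Consequently $f^m(U)$ reaches the left endpoint $c$ for every $m\geq N_2+k$. The right endpoint, when it equals $d$, is treated symmetrically.

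Finally I would combine the (at most two) endpoint cases: taking $N$ to be the maximum of the finitely many thresholds produced above, for every $n\geq N$ the interval $f^n(U)$ contains a point at or to the left of $x-\epsilon$ and a point at or to the right of $x+\epsilon$, so $f^n(U)\supset[x-\epsilon,x+\epsilon]$. The only delicate step is the accessible-endpoint construction, where one must check that the interior preimage $w$ yields a genuinely one-sided neighborhood of the endpoint inside $f^k(W)$, i.e. that $\eta>0$; this is precisely where topological mixing is used, through the impossibility of collapsing a nondegenerate interval to a point.
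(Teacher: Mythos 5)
Your proof is correct. Note, however, that the paper does not actually prove this proposition: it is stated as a consequence of \cite[Proposition 2.8]{Ruette}, which gives eventual covering of compact subintervals of the interior (and of accessible endpoints) for topologically mixing interval maps. So your argument is a self-contained, elementary substitute for that citation, and it is a sound one. The key ingredients you use --- (i) images of intervals are intervals, so covering $[x-\epsilon,x+\epsilon]$ reduces to reaching a point on each side of it; (ii) mixing applied to a target open set strictly inside $\Int(I)$ handles endpoints of $K$ interior to $I$, and yields the auxiliary fact that every compact subinterval of $\Int(I)$ is eventually covered; (iii) an accessible endpoint $c$ of $I$ is handled by pulling back through an interior preimage $w$ with $f^k(w)=c$, using the non-collapse property (mixing forbids $f^k$ from crushing a nondegenerate interval to a point, since a singleton cannot eventually meet two disjoint open sets) to get a nondegenerate one-sided neighborhood $f^k(W)\supset[c,c+\eta]$, and then composing with the eventual covering of $W$ --- are all justified, and the final assembly via a maximum of finitely many thresholds and connectedness is clean. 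The case analysis is also complete: since $[x-\epsilon,x+\epsilon]\subset I$, an endpoint of $I$ can only occur in $K$ as one of the two endpoints $x\pm\epsilon$, which is exactly what you treat. What the citation buys the paper is brevity and the transitive-but-not-mixing bookkeeping done elsewhere in Ruette's book; what your argument buys is independence from that reference, at the cost of about a page of elementary work.
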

 An \emph{$m$-periodic transitive map of a cycle of intervals} is a transitive map $g:M\to M$, where $M\subset\mathbb{R}$ is a finite union of pairwise disjoint compact intervals $I$, $g(I)$, \ldots, $g^{m-1}(I)$, and $g^m(I)=I$. We write $\End(M)$ for the endpoints of the connected components of $M$ and refer to these points simply as \emph{endpoints of $M$}. The set of \emph{exceptional points of $g$} is defined
\begin{equation*}
E := M \setminus \bigcap_U \bigcup_{n=1}^\infty g^n(U),
\end{equation*}
where $U$ ranges over all relatively open nonempty subsets of $M$. It is known that $E$ is finite; it can contain some endpoints and at most one non-endpoint from each component of $M$.  If $g$ is topologically mixing, then $E=\bigcup_{i=0}^{m-1}E_i$, where $E_i$ is the set of non-accessible endpoints of $g^m|_{g^i(I)}$ by Proposition \ref{mixing}. If $g$ is transitive but not mixing, then by~\cite[Proposition 2.16]{Ruette} there is an $m$-periodic orbit $c_0,c_1,\ldots,c_{m-1}$ of points such that $c_i\in\Int(g^i(I))$, $g^{2m}|_{[a_i,c_i]}$ and $g^{2m}|_{[c_i,b_i]}$ are topologically mixing interval maps, where $g^i(I)=[a_i,b_i]$, for $i=0,\ldots m-1$. Then $E=\bigcup_{i=0}^{m-1}E_i$, where $E_i$ is the union of the sets of non-accessible endpoints of $g^{2m}|_{[a_i,c_i]}$ and $g^{2m}|_{[c_i,b_i]}$.

By~\cite[Lemma 2.32]{Ruette}, every point in $E$ is periodic and therefore $g(E)=E$. By the definition of non-accessible points, $g^{-1}(E)\cap M=E$.

We use $m$-periodic transitive maps of cycles of intervals as models for maps acting on basic sets. We recall again Blokh's definition. If $M$ is a cycle of intervals for the map $f:[0,1]\to[0,1]$, then we set
\begin{equation*}
B(M,f)=\{x\in M: \text{for any relative neighborhood $U$ of $x$ in $M$ we have } \overline{\Orb(U)}=M\},
\end{equation*}
and if $B(M,f)$ is infinite, then it is a basic set for $f$.

\begin{theorem}\cite[Theorem 4.1]{Blokh}\label{th:basic-model}
Let $I$ be an $m$-periodic interval for $f$, $M=\Orb(I)$ and $B=B(M,f)$ be a basic set. Then there is a transitive $m$-periodic map $g:M'\rightarrow M'$ and a monotone map $\phi:M\rightarrow M'$ such that $\phi$ semiconjugates $f|_M$ to $g$ and $\phi(B)=M'$. Moreover, for any $x\in M'$, $1\leq \#\{\phi^{-1}(x)\cap B\}\leq 2$ and $\Int (\phi^{-1}(x))\cap B=\emptyset$, and so $\phi^{-1}(x)\cap B\subset \partial \phi^{-1}(x)$. Furthermore, $B$ is a perfect set. 
\end{theorem}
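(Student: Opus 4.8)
The plan is to obtain $g$ as a monotone quotient of $f|_M$ in which every maximal interval with non-dense forward orbit is crushed to a point. Concretely, I would introduce a relation on $M$ by declaring, for $x,y$ in the same connected component of $M$, that $x\sim y$ iff $\overline{\Orb([x,y])}\neq M$ (together with $x\sim x$). Because $[x,y]\subset[x,z]$ forces $\overline{\Orb([x,y])}\subset\overline{\Orb([x,z])}$, each class is order-convex, i.e. a point or a closed subinterval of a component, and one checks the relation is closed; hence the quotient $M'=M/{\sim}$ is again a finite disjoint union of compact intervals and the projection $\phi\colon M\to M'$ is monotone and continuous. Since $[f(x),f(y)]\subset f([x,y])\subset\Orb([x,y])$ gives $\overline{\Orb([f(x),f(y)])}\subset\overline{\Orb([x,y])}$, the relation is preserved by $f$, so $f|_M$ descends to a continuous $g\colon M'\to M'$ with $\phi\circ f=g\circ\phi$.

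Transitivity of $g$ I would read straight off the definition of $B$: for $x\in B$ every relative neighborhood $U$ of $x$ in $M$ has $\overline{\Orb(U)}=M$, so pulling a neighborhood of $\phi(x)$ back along the continuous $\phi$ and pushing its dense orbit forward shows that orbits of nonempty open subsets of $M'$ are dense. The fiber/$B$ bookkeeping then follows from maximality of the classes. If a class $J=[a,b]$ is non-degenerate, then $\overline{\Orb(J)}\neq M$, so no interior point of $J$ can lie in $B$ (a small neighborhood inside $J$ would have non-dense orbit); this gives $\Int(\phi^{-1}(x))\cap B=\emptyset$. On the other side, maximality forces $\overline{\Orb([y,a])}=M$ for every $y<a$ in the component, so combining a left neighborhood of $a$ (dense orbit) with the crushed interval on its right yields $a\in B$ whenever $a$ is interior to the component, and symmetrically for $b$; for a singleton class $\{x\}$ the same reasoning gives $x\in B$. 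Since $\Orb(f^i(I))=M$, no class fills an entire component, so each fiber has at least one interior-side endpoint lying in $B$. Together this gives simultaneously $\phi(B)=M'$, the bound $1\le\#(\phi^{-1}(x)\cap B)\le 2$, and $\phi^{-1}(x)\cap B\subset\partial\phi^{-1}(x)$.

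Perfectness of $B$ I would then deduce from this fiber description. If $x\in B$ were isolated in $B$, choose $\delta$ with $B\cap(x-\delta,x+\delta)=\{x\}$; for $y\in(x-\delta,x)$ the class $C_y=[\alpha_y,\beta_y]$ is non-degenerate, and its right endpoint $\beta_y$ lies in $B$ once interior to the component, forcing $\beta_y=x$ (otherwise $\beta_y\in(x-\delta,x)\cap B$ or $x\in\Int C_y\cap B$, both impossible). The same argument on the right produces a class with left endpoint $x$; as $x$ cannot be interior to a non-degenerate class, these must be two distinct classes both containing $x$, contradicting that $\phi$ is single-valued. Hence $B$ is perfect. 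Finally, writing $M=\bigcup_{i=0}^{m-1}f^i(I)$ as its $m$ cyclically permuted interval components and noting that $\sim$ never identifies points of different components, $\phi$ transports this decomposition to $M'=\bigcup_{i=0}^{m-1}g^i(\phi(I))$ into pairwise disjoint intervals permuted by $g$, non-degenerate because $B$ is infinite; this is exactly the assertion that $g$ is a transitive $m$-periodic map of a cycle of intervals.

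The step I expect to be the main obstacle is verifying that $\sim$ is genuinely transitive, i.e. that $\overline{\Orb([x,y])}\neq M$ and $\overline{\Orb([y,z])}\neq M$ (for $x<y<z$) together imply $\overline{\Orb([x,z])}\neq M$. Since $\overline{\Orb([x,z])}=\overline{\Orb([x,y])}\cup\overline{\Orb([y,z])}$, this amounts to showing that two proper closed invariant subsets of the cycle of intervals $M$, each containing a non-degenerate interval and meeting at $y$, cannot cover $M$. I would attack it using Blokh's structure theory~\cite{Blokh}, relying on the fact that the orbit-closure of a non-degenerate interval is itself a finite union of cycles of intervals, so that a covering of $M$ by two such proper sets would violate the connectivity and periodicity constraints on cycles of intervals. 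All remaining verifications—closedness of the relation, continuity of the monotone quotient $g$, and the elementary neighborhood arguments above—I expect to be routine.
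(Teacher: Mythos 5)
The paper itself does not prove this statement: it is imported verbatim from Blokh (\cite[Theorem 4.1]{Blokh}), and the machinery Blokh's actual proof runs on (prolongation sets $P^T_M(x)$, source sides, and the identification $B(M,f)=E(M,f)$) is exactly what this paper quotes separately as Lemma~\ref{lem:prol} and Lemma~\ref{lem:prol2}. So your proposal has to be judged against Blokh's known argument, and there it has a genuine gap, located precisely at the step you flagged: transitivity of your relation $\sim$. First, note that transitivity is simply false without the basic-set hypothesis: glue two transitive self-maps of $[0,\frac12]$ and $[\frac12,1]$ at a common fixed point $\frac12$; then with $x=\frac14$, $y=\frac12$, $z=\frac34$ one has $\overline{\Orb([x,y])}=[0,\tfrac12]$, $\overline{\Orb([y,z])}=[\tfrac12,1]$, but $\overline{\Orb([x,z])}=[0,1]$. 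Here $B([0,1],f)=\{\frac12\}$ is finite, so the theorem's hypothesis fails --- which shows that any proof of transitivity must make essential use of the infiniteness of $B$, something your sketch never does.

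Second, the tool you propose for this step is based on a false premise. The orbit closure of a non-degenerate interval in $M$ is \emph{not} in general a finite union of cycles of intervals, even when $B(M,f)$ is a genuine basic set. Take the tent map $T$, a point $w$ with dense orbit avoiding $\{0,\frac12,1\}$, and blow up every point of the grand orbit $\bigcup_n T^{-n}(\Orb(w))$ into an interval of positive length (summable), defining $f$ to map each blown-up fiber affinely onto its image fiber. The resulting map $f$ on $M$ is continuous and surjective, $B(M,f)$ is the Cantor set $M\setminus\bigcup(\text{fiber interiors})$ (so the hypotheses of the theorem hold), and each blown-up fiber is a \emph{wandering} interval whose orbit closure is an infinite disjoint union of intervals together with a Cantor set --- nothing like a finite union of cycles of intervals. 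So the argument you defer to cannot be carried out, and the connectivity/periodicity contradiction you hope for does not exist in general. The same objection applies to your unproved assertion that $\sim$ is closed: a class is an increasing union of intervals each with proper invariant orbit closure, and an increasing union of proper closed invariant sets can perfectly well have dense union, so closedness of classes needs a real argument too. Granting transitivity and closedness, your fiber bookkeeping, the surjectivity $\phi(B)=M'$, and the perfectness argument do go through essentially as you describe; but those two granted facts are where the entire difficulty of Blokh's theorem lives, and the only known way to them is through the prolongation-set structure theory (Lemmas~\ref{lem:prol} and~\ref{lem:prol2}), not through the orbit-closure dichotomy you propose. As it stands, the proposal defers the main content of the theorem to a claim that is false.
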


\begin{lemma}\label{lem:basic}
Let $B$ be a basic set, $M$ the smallest cycle of intervals for $f$ which contains $B$, and $\phi:(M,f)\to(M',g)$ the semiconjugacy to the $m$-periodic transitive map $g$ given by Theorem~\ref{th:basic-model}. Let $E$ be the set of exceptional points of the map $g$ acting on $M'$. Suppose $y\in M$ and $\phi(y) \not\in E\cup \End(M')$. Then $\sa(y)\supset B$.

\end{lemma}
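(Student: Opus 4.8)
The plan is to fix an arbitrary point $b\in B$ and to produce a single backward orbit branch of $y$ that accumulates at $b$; since $b$ is arbitrary and $B$ is closed, this gives $\sa(y)\supseteq B$. The key reduction is to show that $y$ admits \emph{exact} $f$-preimages, at arbitrarily large times, lying arbitrarily close to $b$. The main obstacle is precisely that $\phi$ is only a semiconjugacy: a point landing in the correct fiber $\phi^{-1}(\phi(y))$ of the model need not be $y$ itself, so one cannot simply lift a backward branch of $p:=\phi(y)$ under $g$ to a backward branch of $y$ under $f$. I will circumvent this with a covering argument that produces genuine preimages of $y$.

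First I would establish the following \emph{covering statement} in the model: for every nondegenerate open $V\subseteq M'$ and every target value $u\notin E\cup\End(M')$, there are arbitrarily large $n$ with $g^n(V)\supseteq[u-\epsilon,u+\epsilon]$ for some $\epsilon>0$. The hypothesis $u\notin E\cup\End(M')$ is exactly what is needed: it places $u$ in the interior of a mixing piece of $M'$ (either a component of $M'$ when $g$ is mixing, or one of the half-intervals $[a_i,c_i],[c_i,b_i]$ when $g$ is transitive but not mixing), so that for small $\epsilon$ the interval $[u-\epsilon,u+\epsilon]$ lies strictly inside that piece and contains no non-accessible endpoint. I would first use transitivity of $g$ on the cycle $M'$ to route a nondegenerate subinterval of $V$ into the mixing piece containing $u$, and then apply Proposition~\ref{mixing} there to swallow $[u-\epsilon,u+\epsilon]$; since Proposition~\ref{mixing} gives the conclusion for \emph{all} sufficiently large iterates, the times $n$ are unbounded. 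I would then transfer this back to $f$ using that $\phi$ is a monotone surjection that is injective off countably many fibers: since $V$ lies in one component, $\phi^{-1}(V)$ is an interval, so $A:=f^{n}(\phi^{-1}(V))$ is an interval with $\phi(A)=g^n(V)\supseteq[u-\epsilon,u+\epsilon]$, and monotonicity of $\phi$ forces $A\supseteq\phi^{-1}([u-\epsilon,u+\epsilon])\supseteq\phi^{-1}(u)$. Applying this with $u=\phi(y)=p$ (legitimate by hypothesis) and $y\in\phi^{-1}(p)\subseteq A$ yields an exact preimage $w\in\phi^{-1}(V)$ with $f^n(w)=y$. This exact-preimage lemma is the crux of the whole argument.

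Next I would use the one-sidedness of $V$ to place $w$ near the specific point $b$. By Theorem~\ref{th:basic-model}, $b$ is an endpoint of its fiber $\phi^{-1}(x^*)$, where $x^*=\phi(b)$. If $b=\max\phi^{-1}(x^*)$, take $V=(x^*,x^*+\delta)$; then $\phi^{-1}(V)$ consists of points just to the right of $b$, which converge to $b$ as $\delta\to0$. If $b=\min\phi^{-1}(x^*)$, use a left-sided $V$ instead (if the fiber is degenerate, either side works). Choosing $\delta$ smaller than the distance from $x^*$ to the nearest point of the finite set $E\cup\End(M')$ on that side guarantees $V\cap(E\cup\End(M'))=\emptyset$. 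Combined with the covering statement applied with target $u=p$, this produces, for each $\delta$, an exact preimage $f^{n}(w)=y$ with $w$ within a prescribed distance of $b$ and $\phi(w)\in V$, hence $\phi(w)\notin E\cup\End(M')$. The finitely many $B$-points lying over endpoints of components of $M'$ (where no interior side is available) are handled separately: since $B$ is perfect, each such point is a limit of interior-fiber $B$-points, which are reachable as above, so exact preimages of $y$ still accumulate there.

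Finally I would assemble a single branch by iterating the construction from successive branch ends rather than from $y$. Having produced a finite backward segment ending at some $w$ with $\phi(w)\notin E\cup\End(M')$ and $w$ within $1/j$ of $b$, I apply the covering statement once more, now with target $u=\phi(w)$ and a one-sided neighborhood $V'$ of $x^*$ (on the side of $b$), to obtain $w'\in\phi^{-1}(V')$ with $f^{n'}(w')=w$, where $n'$ is as large as desired, $w'$ lies within $1/(j+1)$ of $b$, and again $\phi(w')\notin E\cup\End(M')$. The intermediate points $w',f(w'),\ldots,f^{n'-1}(w'),w$ extend the branch, and the choice $\phi(w')\notin E\cup\End(M')$ keeps the procedure alive at the next stage. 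Iterating yields an infinite backward orbit branch of $y$ with a subsequence converging to $b$, so $b\in\sa(y)$. As $b\in B$ was arbitrary, $\sa(y)\supseteq B$.
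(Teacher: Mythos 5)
Your proposal is correct and follows essentially the same route as the paper: both arguments combine the model of Theorem~\ref{th:basic-model}, the covering property of Proposition~\ref{mixing}, and monotonicity of $\phi$ to manufacture exact preimages of $y$ lying one-sidedly near an arbitrary point of $B$, then chain these into a single backward orbit branch, preserving the key invariant $\phi(\cdot)\notin E\cup\End(M')$ at every stage. The differences are only in bookkeeping: you pull back one-sided neighborhoods of $\phi(b)$ from the model $M'$ and treat the finitely many points of $B$ lying over $\End(M')$ by a limiting argument using perfectness of $B$, whereas the paper pushes forward one-sided neighborhoods of $b$ inside $M$, choosing a side on which $\phi$ is non-constant and whose image avoids $\End(M')$.
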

\begin{proof}
Let $x\in B$. There is $\epsilon>0$ such that $\phi|_{(x,x+\epsilon)}$ is not constant and $\phi((x,x+\epsilon))\cap\End(M')=\emptyset$ or $\phi|_{(x-\epsilon,x)}$ is not constant and $\phi((x-\epsilon,x))\cap\End(M')=\emptyset$. Otherwise $x$ has a neighborhood $N$ such that $\phi|_N$ is constant which is in a contradiction with $x\in B$ by Theorem \ref{th:basic-model}. We can assume  $\phi|_{(x,x+\epsilon)}$ is not constant and $\phi((x,x+\epsilon))\cap \End(M')=\emptyset$, and denote $V=(x,x+\epsilon)$. Then $U=\phi(V)$ is a non-degenerate interval in $M'$. Since $\phi(y)\notin E\cup \End(M')$, there is $\delta>0$ such that $[\phi(y)-\delta,\phi(y)+\delta]\subset M'$ and $[\phi(y)-\delta,\phi(y)+\delta]\cap E=\emptyset$. The set $E$ was defined as a union of non-accessible endpoints of a topologically mixing map $g^m$ (resp. $g^{2m})$, therefore we can use Proposition \ref{mixing} for the map $g^m$ (resp. $g^{2m})$ acting on $M'$. There is an $N>0$ such that $g^N(U)\supset [\phi(y)-\delta,\phi(y)+\delta]$. 
 But $\phi(f^N(V))=g^N(U)$, so $\phi(y)$ is in $\phi(f^N(V))$. Since $\phi$ is monotone this means either $y\in f^N(V)$ or $\phi(y)$ is an endpoint of the interval $\phi(f^N(V))$. But we have seen that it is not an endpoint. Therefore $y\in f^N(V)$ and we can find $y_1\in V$ and $N_1=N$ such that $f^{N_1}(y_1)=y.$
Notice that $\phi(y_1)\notin E$ since $g^{N_1}(\phi(y_1))=\phi(y)\notin E$ and $g^{-N_1}(E)\cap M'=E$; and $\phi(y_1)\notin \End(M')$ since $y_1\in V$ and $\phi(V)\cap \End(M')=\emptyset$.  By the same procedure, we can find $y_2\in(x,x+\epsilon/2)\cap M$ and $N_2\in\mathbb{N}$ such that $f^{N_2}(y_2)=y_1$. By repeating this process we construct a sequence $\{y_n\}_{n=1}^{\infty}$ converging to $x$ which is a subsequence of a backward orbit branch of $y$. Since $x\in B$ was arbitrary, this shows $B\subset s\alpha(y)$.
\end{proof}
\begin{corollary}\label{cor:bs}
For every basic set $B$, there is $y\in B$ such that $\sa(y)\supset B$.
\end{corollary}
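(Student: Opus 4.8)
The plan is to reduce everything to Lemma~\ref{lem:basic}, which already does the analytic work: it guarantees $\sa(y)\supset B$ for \emph{any} point $y\in M$ whose image $\phi(y)$ avoids the finite exceptional set $E\cup\End(M')$. So the only task is to locate such a $y$ \emph{inside $B$ itself}, and this is a pure counting argument using the structure of the semiconjugacy $\phi$ from Theorem~\ref{th:basic-model}.

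First I would record that the forbidden set $E\cup\End(M')$ is finite: the endpoints $\End(M')$ of the finitely many components of $M'$ are finite, and $E$ is finite by the cited results of Ruette (it contains only some endpoints and at most one non-endpoint per component). Next I would invoke the fiber bound from Theorem~\ref{th:basic-model}, namely that $1\le\#\bigl(\phi^{-1}(x)\cap B\bigr)\le 2$ for every $x\in M'$. Combining these two facts, the set $\phi^{-1}\!\bigl(E\cup\End(M')\bigr)\cap B$ is a union of finitely many fibers, each meeting $B$ in at most two points, hence it is finite.

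On the other hand, $B$ is a perfect set by the final clause of Theorem~\ref{th:basic-model}, so it is infinite (indeed uncountable). Therefore $B$ cannot be exhausted by the finite set just described, and I may choose a point
\begin{equation*}
y\in B\setminus\phi^{-1}\!\bigl(E\cup\End(M')\bigr),
\end{equation*}
which is precisely a point of $B\subset M$ with $\phi(y)\notin E\cup\End(M')$. Applying Lemma~\ref{lem:basic} to this $y$ yields $\sa(y)\supset B$, as desired.

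I do not expect any genuine obstacle here, since the hard mixing/backward-branch construction is confined to Lemma~\ref{lem:basic}. The only point requiring a little care is making sure both inputs are read off correctly from Theorem~\ref{th:basic-model}: that $B$ is infinite (via perfectness) and that the fibers of $\phi$ meet $B$ in at most two points, so that the ``bad'' portion of $B$ stays finite while $B$ itself does not.
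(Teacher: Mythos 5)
Your proof is correct and follows essentially the same route as the paper's: both arguments combine the finiteness of $E\cup\End(M')$ with the at-most-$2$-to-$1$ property of $\phi|_B$ and the perfectness (hence uncountability) of $B$ to find a point $y\in B$ with $\phi(y)\notin E\cup\End(M')$, then conclude via Lemma~\ref{lem:basic}. The only cosmetic difference is that you count the ``bad'' set as finite while the paper counts the ``good'' set as uncountable; this is the same argument.
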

\begin{proof}
Let $M,\phi, g, M', E$ be as in the previous proof. Since the map $\phi|_B$ is at most 2-to-1 and $E$ is a finite set, there are uncountably many points $y\in B$ such that $\phi(y)\notin E\cup\End(M')$. The result follows by Lemma \ref{lem:basic}.
\end{proof}
Before we proceed to the proof of Theorem~\ref{th:basic} we need to recall the definition of a \emph{prolongation set} and its relation to basic sets. Let $M$ be a cycle of intervals. Let the side $T$ be either the left side $T=L$ or the right side $T=R$ of a point $x\in M$ and $W_T(x)$ be a \emph{one-sided neighborhood} of $x$ from the $T$-hand side, i.e. $W_T(x)$ contains for some $\epsilon>0$ the interval $(x,x+\epsilon)$ (resp. $(x-\epsilon,x)$) when $T=R$ (resp. $T=L$). We do not consider the side $T=R$ (resp. $T=L$) when $x$ is a right endpoint (resp. left endpoint) of a component of $M$. Now let
$$P^T_M(x)=\bigcap_{W_T(x)}\bigcap_{n\geq 0}\overline{\bigcup_{i\geq n}f^i(W_T(x)\cap M)},$$
where the intersection is taken over the family of all one-sided neighborhoods $W_T(x)$ of $x$. We will write $P^T(x)$ instead of $P^T_{[0,1]}(x)$. The following Lemma \ref{lem:prol} and Lemma \ref{lem:prol2} about properties of prolongation sets are taken from~\cite{Blokh}.
\begin{lemma}\label{lem:prol}\cite{Blokh}
Let $x\in[0,1]$. Then $P^T(x)$ is a closed invariant set and only one of the following possibilities holds:
\begin{itemize}
\item There exists a wandering interval $W_T(x)$ with pairwise disjoint forward images and $P^T(x)=\omega(x)$. 
\item There exists a periodic point $p$ such that $P^T(x)=\Orb(p)$.
\item There exists a solenoidal set $Q$ such that $P^T(x)=Q$.
\item There exists a cycle of intervals $M$ such that $P^T(x)=M$.
\end{itemize}
\end{lemma}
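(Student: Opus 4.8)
The plan is to unwind the definition and then reduce everything to a single dichotomy on the existence of a one-sided wandering interval. Writing $T=R$ and $W=(x,x+\epsilon)$ for definiteness, the definition says that $y\in P^T(x)$ exactly when, for every $\epsilon>0$, there are points $w_k\to x$ from the right and times $n_k\to\infty$ with $f^{n_k}(w_k)\to y$. Two conclusions are immediate from this reformulation. First, $P^T(x)$ is an intersection of closed sets, hence closed; and it is invariant, because $f^{n_k}(w_k)\to y$ gives $f^{n_k+1}(w_k)\to f(y)$, so $f(y)\in P^T(x)$. Second, given $f^{n_k}(x)\to y$, continuity lets us choose $w_k\to x$ from the right with $f^{n_k}(w_k)\to y$, so $\omega(x)\subset P^T(x)$ always.

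Next I would split according to whether some $W=W_T(x)$ has pairwise disjoint forward images; since this property passes to smaller neighborhoods, it either holds for all sufficiently small $W$ or for none. In the wandering case the intervals $f^i(W)$ are pairwise disjoint subintervals of $[0,1]$, so $\sum_i \operatorname{diam} f^i(W)\le 1$ and $\operatorname{diam} f^i(\overline{W})\to 0$. Hence for $w_k\to x$ in $W$ we get $|f^{n_k}(w_k)-f^{n_k}(x)|\le \operatorname{diam} f^{n_k}(\overline{W})\to 0$, which forces $P^T(x)\subset\omega(x)$. Together with the reverse inclusion above this establishes the first alternative $P^T(x)=\omega(x)$.

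The substance of the lemma is the remaining case, where no one-sided neighborhood is wandering, so for every $\epsilon$ there are times $i<j$ with $f^i(W)\cap f^j(W)\neq\emptyset$. Here I would invoke Blokh's spectral decomposition, which already underlies the structure theorems quoted in this section. The images $f^i(W)$ are nondegenerate intervals that recur (overlap) under iteration, and this recurrence organises them into a trapping periodic-interval structure. Tracking that structure yields exactly the three remaining alternatives: if the trapping intervals saturate a cycle of intervals $M$ whose images become dense in it, then $P^T(x)=M$; if they are confined to periodic intervals of bounded period collapsing onto a single orbit, then $P^T(x)=\Orb(p)$; and if the periods of the trapping intervals tend to infinity, they form a generating sequence and $P^T(x)=Q$ is the associated solenoidal set. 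The identification of $P^T(x)$ with each piece would use the models recalled earlier — the translation model of Theorem~\ref{th:solenoid-model} in the solenoidal case and the transitive cycle-of-intervals model of Theorem~\ref{th:basic-model} in the interval case.

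The main obstacle is precisely this trichotomy: showing that the overlapping recurrence of the interval images cannot produce a ``mixed'' limit set, and pinning $P^T(x)$ down to a single recurrent type. The delicate points are converting the returns $f^i(W)\cap f^j(W)\neq\emptyset$ into a genuine periodic (or nested periodic) interval structure, and deciding whether that structure collapses to a finite orbit, saturates a cycle of intervals, or refines indefinitely into a solenoid. This is the technical core of Blokh's decomposition theorem; by contrast the closedness, the invariance, and the wandering-interval alternative are all routine once the definition has been unwound.
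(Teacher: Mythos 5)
The paper gives no proof of this lemma at all---it is quoted directly from Blokh's spectral decomposition paper \cite{Blokh}---and your proposal rests on exactly the same citation for the substantive trichotomy (periodic orbit / solenoid / cycle of intervals), so it takes essentially the same approach as the paper. Your added verification of the routine parts is correct: closedness and invariance follow from the definition as you say, and in the wandering case the summable-diameters argument $\sum_i \operatorname{diam} f^i(W)\le 1$ together with the always-valid inclusion $\omega(x)\subset P^T(x)$ does give $P^T(x)=\omega(x)$.
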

There is a close relation between prolongation sets and basic sets. If $M$ is a cycle of intervals for $f$ then we define
\begin{equation*}
E(M,f)=\{x\in M:\text{ there is a side $T$ of $x$ such that }P^T_M(x)=M\},
\end{equation*} and, for $x\in E(M,f)$, we call this side $T$ a \emph{source side} of $x$.

\begin{lemma}\label{lem:prol2}\cite{Blokh}
Either the set $E(M,f)$ is a single periodic orbit, or it is infinite and $E(M,f)=B(M,f)$. In the latter case, for any $x\in B(M,f)$ with a source side $T$ and any one-sided neighborhood $W_T(x)$, we have $W_T(x)\cap B(M,f)\neq \emptyset$.
\end{lemma}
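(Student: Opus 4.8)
The plan is to reduce everything to the one-sided prolongations $P^T_M(x)$ and the set-orbit limit $\omega_M(W):=\bigcap_{n\ge 0}\overline{\bigcup_{i\ge n}f^i(W\cap M)}$, so that the defining condition of $E(M,f)$ (some $P^T_M(x)=M$) can be compared directly with that of $B(M,f)$ (every relative neighborhood has dense forward orbit in $M$). Everything rests on a flow-forward observation: since $M$ is a cycle of intervals, $f(M)=M$, so for any $W\subset M$ with $\overline{\Orb(W)}=M$ continuity gives $M=f(M)=f(\overline{\Orb(W)})\subset\overline{\bigcup_{i\ge 1}f^i(W)}\subset M$, and inductively $\overline{\bigcup_{i\ge n}f^i(W)}=M$ for every $n$; hence $\omega_M(W)=M$. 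Consequently $P^T_M(x)=M$ holds precisely when $\omega_M(W_T)=M$ for every one-sided neighborhood $W_T$ on that side.

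This immediately yields the inclusion $E(M,f)\subset B(M,f)$, valid regardless of cardinality: if $x$ has source side $T$, then every relative neighborhood $U$ contains some $W_T$, so $M=\omega_M(W_T)\subset\overline{\Orb(W_T)}\subset\overline{\Orb(U)}\subset M$ and $x\in B(M,f)$. Thus the two alternatives of the lemma correspond to $B(M,f)$ being finite or infinite, and it remains to prove the reverse inclusion $B(M,f)\subset E(M,f)$ in the infinite case and to identify the finite case.

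For the reverse inclusion I would fix $x\in B(M,f)$ and intersect the flow-forward identity over shrinking two-sided neighborhoods $U=W_L\cup\{x\}\cup W_R$. Using that the relevant unions decrease with $n$ and with $U$, one gets $\omega_M(U)=\omega_M(W_L)\cup\omega(x)\cup\omega_M(W_R)$ and, in the limit, the decomposition $M=P^L_M(x)\cup\omega(x)\cup P^R_M(x)$ into finitely many closed invariant sets. The goal is to force $P^L_M(x)=M$ or $P^R_M(x)=M$. By the structure dichotomy of Lemma~\ref{lem:prol}, each one-sided prolongation that is not $M$ is a periodic orbit, a solenoidal set, or an $\omega$-limit set with a wandering neighborhood --- all nowhere dense --- or else a cycle of intervals strictly smaller than $M$; and $\omega(x)$ is likewise either nowhere dense, a proper cycle of intervals, or all of $M$. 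Since a finite union of nowhere dense closed sets cannot exhaust the Baire space $M$, the only obstruction is a proper cycle of intervals appearing among the three pieces. I would exclude this using that $M$ is the \emph{smallest} cycle of intervals containing $B$ together with the indecomposability of the basic set, as a proper cycle absorbing the eventual orbits of neighborhoods of $x$ would confine $B$ to a smaller cycle; the remaining degenerate possibility $\omega(x)=M$ forces $f|_M$ to be transitive, whence $B(M,f)=M=E(M,f)$ and the lemma holds at once.

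The finite case and the final assertion are the delicate points. For the dichotomy I would check that source sides are carried forward: if $P^T_M(x)=M$ then, even at a turning point, $f(W_T(x))$ contains a one-sided neighborhood of $f(x)$ with $\omega_M$ again equal to $M$, so $f(x)\in E(M,f)$; a finite $E(M,f)$ is therefore a forward-invariant finite set, i.e.\ a union of periodic orbits, and the transitivity-type strength of a source side rules out a splitting of $M$ among several distinct orbits, leaving a single periodic orbit. For the last claim --- that every one-sided neighborhood $W_T(x)$ on a source side meets $B(M,f)$ --- I expect the main obstacle. The idea is that the condition $P^T_M(x)=M$ propagates along the source side: via the monotone semiconjugacy $\phi\colon(M,f)\to(M',g)$ of Theorem~\ref{th:basic-model} onto the transitive model, the points of $M'$ with dense $g$-orbit are dense, and pulling them back produces points $z$ arbitrarily close to $x$ on side $T$ with $P^{T'}_M(z)=M$, i.e.\ points of $E(M,f)=B(M,f)$. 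The crux is to verify that this propagation genuinely lands inside $W_T(x)$ rather than beyond an intervening gap, which is exactly where the fiber structure of $\phi$ (at most two $B$-points per fiber, lying on its boundary) must be used.
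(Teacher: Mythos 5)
The first thing to say is that the paper offers no proof to compare against: Lemma~\ref{lem:prol2} is imported verbatim from Blokh's spectral decomposition theory and cited as such, so your attempt must stand entirely on its own. Its soft parts do stand: the flow-forward identity $\omega_M(W)=M$ whenever $\overline{\Orb(W)}=M$, the resulting inclusion $E(M,f)\subset B(M,f)$, and the decomposition $M=P^L_M(x)\cup\omega(x)\cup P^R_M(x)$ for $x\in B(M,f)$ (by the pigeonhole argument for decreasing families of compact sets) are all correct and well organized. The problems are in the three substantive steps, and the first of these rests on a claim that is provably false, not merely unproved.

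You propose to \emph{exclude} a proper cycle of intervals from appearing among the three pieces, on the grounds that it ``would confine $B$ to a smaller cycle.'' But proper cycles do appear: in the paper's own first map of Figure~\ref{fig:prep}, take $M=[0,1]$ and $x$ the left endpoint of the invariant middle interval $M'$; then $x\in B(M,f)$, the left side of $x$ is a source side, and $P^R_M(x)=M'$ is a proper cycle --- yet $B(M,f)$ is not confined to $M'$ (on the contrary, $B(M,f)\cap\Int(M')=\emptyset$, so $B(M,f)$ lies almost entirely \emph{outside} $M'$). A prolongation equal to a proper cycle $N$ never confines $B(M,f)$ to $N$; it only forces $B(M,f)$ to avoid $\Int_M(N)$. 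What must actually be ruled out is that \emph{all three} pieces fail to equal $M$, and that argument has to use the infiniteness of $B(M,f)$, which your sketch never invokes at this point: if $P^L_M(x)$, $P^R_M(x)$, $\omega(x)$ are all proper, the cycle pieces among them cover $M$ up to a nowhere dense set, $B(M,f)$ avoids all their interiors, and hence $B(M,f)$ lies in the finite union of their component endpoints --- contradicting infiniteness. Infiniteness is genuinely indispensable: glue two transitive maps at a common fixed endpoint $c$, with both halves of $[0,1]$ invariant; then $B([0,1],f)=\{c\}$ while $E([0,1],f)=\emptyset$, so the inclusion $B\subset E$ is simply false for finite $B$. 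Your other two steps have real gaps as well. For forward invariance of $E(M,f)$: from $\omega_M(W_T(x))=M$ you may conclude $\omega_M(f(W_T(x)))=M$, but membership of $f(x)$ in $E(M,f)$ requires $\omega_M(V)=M$ for \emph{arbitrarily small} one-sided neighborhoods $V$ of $f(x)$, and such a $V$ need not contain any set $f(W_T(x))$ (at a turning point every image $f(W_T(x))$ straddles $f(x)$); the inclusion $V\subset f(W_T(x))$ yields only $\omega_M(V)\subset M$, which is vacuous. And the final assertion --- that a source-side neighborhood meets $B(M,f)$ --- you concede is unresolved; the dangerous case is a one-sided neighborhood of $x$ lying inside a single non-degenerate fibre of $\phi$ (whose interior is disjoint from $B$), and one must show the forward orbit of such a fibre interior cannot be dense in $M$, which requires an actual argument (e.g., that distinct fibres are pairwise disjoint intervals, so their lengths tend to zero, plus a wandering-type analysis). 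As it stands, each of the three nontrivial claims of the lemma remains unproved, so the proposal does not establish the statement the paper quotes from Blokh.
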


\begin{proof}[Proof of Theorem~\ref{th:basic}]
(1) Let $\phi$ and $g$ be the maps given in Theorem~\ref{th:basic-model} and $E$  be the set of exceptional points of the map $g$ acting on $M'$. Since $\phi|_B$ is an at most 2-to-1 map, $\phi(\alpha(y)\cap B)$ is an infinite subset of $M'$. But $E\cup \End(M')$ is a finite set, so we can find a point $z\in\alpha(y)\cap B$ such that $\phi(z)\notin E\cup \End(M')$ and therefore $z\notin \phi^{-1}(E\cup\End(M'))$. The set $\phi^{-1}(E\cup\End(M'))$ is a union of finitely many, possibly degenerate, closed intervals in $M$. Since $z\in (\alpha(y)\cap B )\setminus \phi^{-1}(E\cup\End(M'))$, there is a pre-image $y'\in M$ of $y$, $y=f^k(y')$, for some $k\geq 0$, and simultaneously $y'\notin \phi^{-1}(E\cup\End(M'))$, which implies $\phi(y')\notin E\cup\End(M')$. Then $y\in M$ by the invariance of $M$. By Lemma~\ref{lem:basic} applied to $y'$, $\sa(y')\supset B$. But the containment $\sa(y)\supset\sa(y')$ is clear from the definition of $\sa$-limit sets, and so $\sa(y)\supset B$.\\
(2) Let $\{y_i\}_{i=0}^{\infty}$ be the backward orbit branch of $y$ accumulating on $x$. Since $x$ is not a periodic point, it is not contained in $\{y_i\}_{i=0}^{\infty}$ more then one time and we can assume that $\{y_i\}_{i=0}^{\infty}$ accumulates on $x$ from one side $T$. Consider the prolongation set $P^T(x)$. Clearly $\{y_i\}_{i=0}^{\infty}\subset P^T(x)$. Since $P^T(x)$ is closed and invariant, $x$ and $\Orb(x)$ belong to $P^T(x)$, we see that $P^T(x)$ contains both periodic and non-periodic points. By Lemma \ref{lem:prol}, there is only one possibility $P^T(x)=M$, where $M$ is a cycle of intervals. The other possibilities are ruled out - $\Orb(p)$, where $p$ is a periodic point and $\omega(x)$, where $x$ is a preperiodic point, can not contain a non-periodic point; and a solenoidal set $Q$ can not contain a periodic point. Since $P^T(x)=M$ contains $\{y_i\}_{i=0}^{\infty}$ it must contain a $T$-sided neighborhood of $x$ and therefore $P^T_M(x)=P^T(x)=M$. Let $E(M,f)=\{z\in M:\text{ there is a side $S$ of $z$ such that }P^S_M(z)=M\}$. Since $x\in E(M,f)$ and $x$ is not periodic, by Lemma \ref{lem:prol2}, $E(M,f)=B(M,f)$ and $T$ is a source side of $x$ in $B(M,f)$.  Let $\phi$ and $g$ be the maps given in Theorem~\ref{th:basic-model} and $E$  be the set of exceptional points of the map $g$ acting on $M'$.  By Lemma \ref{lem:prol2}, $\phi$ is not constant on any $T$-sided neighborhood of $x$ and since $\{y_i\}_{i=0}^{\infty}$ accumulates on $x$ from the source side $T$, we can find a pre-image $y'\in M$ of $y$ such that $\phi(y')\notin E\cup \End(M')$. Then $\sa(y')\supset B$ by Lemma~\ref{lem:basic}, and $\sa(y)\supset\sa(y')$ since $y'$ is a preimage of $y$. We conclude that $\sa(y)\supset B$.

\end{proof}
We record here one corollary which we will need several times in the rest of the paper.

\begin{corollary}\label{cor:tc}
If $\sa(x)$ contains infinitely many points from a transitive cycle $M$, then $x\in M$ and $\sa(x)\supseteq M$.
\end{corollary}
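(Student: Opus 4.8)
The plan is to identify the transitive cycle $M$ with a basic set and then invoke Theorem~\ref{th:basic}. The first task is therefore to show that $B(M,f)=M$. Let $U$ be any nonempty relatively open subset of $M$. Since $f|_M$ is transitive, for every nonempty relatively open $V\subset M$ there is $n\geq 0$ with $f^n(U)\cap V\neq\emptyset$, so $V$ meets $\Orb(U)=\bigcup_{n\geq 0}f^n(U)$. As $V$ was arbitrary, $\Orb(U)$ is dense in $M$ and hence $\overline{\Orb(U)}=M$. This holds for every relative neighborhood $U$ of every point of $M$, so by the defining formula $B(M,f)=M$. Because $M$ is a cycle of intervals it is uncountable, so $B(M,f)$ is infinite and is therefore a basic set; moreover $M$ is trivially the smallest cycle of intervals containing it.

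With $B:=B(M,f)=M$ recognized as a basic set, the conclusion follows from Theorem~\ref{th:basic}(1). Indeed, every backward orbit branch of $x$ is in particular a preimage sequence of $x$, so $\sa(x)\subseteq\alpha(x)$. By hypothesis $\sa(x)$ contains infinitely many points of $M=B$, and hence so does $\alpha(x)$; that is, $\alpha(x)$ contains an infinite subset of the basic set $B$. Theorem~\ref{th:basic}(1) then yields $x\in M$ and $\sa(x)\supset B=M$, which is precisely the assertion.

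There is essentially no serious obstacle here: the only point requiring care is the identification $B(M,f)=M$, and that reduces immediately to the standard fact that a transitive map has a dense forward orbit issuing from every nonempty open set. Once this is in place, the statement is a direct corollary of Theorem~\ref{th:basic}, with the inclusion $\sa(x)\subseteq\alpha(x)$ (already used in the proof of Lemma~\ref{lem:inv-sa}) bridging the gap between the hypothesis stated for $\sa(x)$ and the hypothesis required by that theorem.
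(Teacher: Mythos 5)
Your proof is correct and follows essentially the same route as the paper, which simply observes that $M$ is itself a basic set and invokes Theorem~\ref{th:basic}; you have merely made explicit the verification that $B(M,f)=M$ is infinite and the bridge $\sa(x)\subseteq\alpha(x)$, both of which the paper leaves implicit.
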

\begin{proof}
In this case $M$ is itself a basic set, so we may apply Theorem~\ref{th:basic}.
\end{proof}

\section{General properties of special $\alpha$-limit sets for interval maps}\label{sec:general}

\subsection{Isolated points are periodic}\label{sec:iso-pts}\strut\\\indent
Unless an $\omega$-limit set is a single periodic orbit, its isolated points are never periodic~\cite{Shar65}. The opposite phenomenon holds for the $\sa$-limit sets of an interval map.

\begin{theorem}
Isolated points in a $\sa$-limit set for an interval map are periodic.
\end{theorem}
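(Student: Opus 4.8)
The plan is to argue by contradiction: assume $x$ is isolated in $\sa(y)$ but not periodic, and exhibit points of $\sa(y)\setminus\{x\}$ converging to $x$. First I would fix a backward orbit branch $\{y_i\}_{i\ge0}$ of $y$ (so $y_0=y$ and $f(y_{i+1})=y_i$) having a subsequence converging to $x$. Since $x$ is not periodic it occurs only finitely often among the $y_i$ (two occurrences would force some $f^k(x)=x$), so after deleting finitely many terms I may assume the convergent subsequence $y_{n_i}\to x$ lies strictly on one fixed side $T\in\{L,R\}$ of $x$. The engine of the proof is the prolongation set $P^T(x)$: from $y_j=f^{n_i-j}(y_{n_i})$ and the fact that $y_{n_i}$ eventually enters every one-sided neighborhood $W_T(x)$, one checks $y_j\in P^T(x)$ for all $j$, so $\{y_i\}_{i\ge0}\subset P^T(x)$, and since this set is closed and invariant, $x\in P^T(x)$ and $\Orb(x)\subset P^T(x)$. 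Then I would apply Lemma~\ref{lem:prol}, which classifies $P^T(x)$ as a periodic orbit, a solenoidal set, an $\omega$-limit set arising from a wandering interval, or a cycle of intervals. The periodic case is impossible, since it would put $x$ on a periodic orbit.

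If $P^T(x)=Q$ is solenoidal, I would place $Q$ inside a maximal solenoidal set $\widehat Q$ with a generating sequence; then $x\in\sa(y)\cap\widehat Q\subset\alpha(y)\cap\widehat Q$, so Theorem~\ref{th:solenoidal}(1) yields $y\in\widehat Q$ and part~(2) yields $\sa(y)\cap\widehat Q=S_{\Rec}$. As $S_{\Rec}$ is perfect and contains $x$, the point $x$ cannot be isolated in $\sa(y)$. If instead $P^T(x)=\omega(x)$ is the wandering-interval alternative, then $\{y_i\}\subset\omega(x)$, while Hero's lemma (\cite{Hero}: the $\omega$-limit set of any point of $\sa(y)$ is contained in $\sa(y)$) gives $\omega(x)\subset\sa(y)$; hence the points $y_{n_i}\neq x$ themselves lie in $\sa(y)$ and converge to $x$, contradicting isolation.

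The remaining case $P^T(x)=M$, a cycle of intervals, is where I would lean on the basic-set machinery. Because the $y_i$ lie in $M$ and accumulate at $x$ from side $T$, a full one-sided neighborhood of $x$ is contained in $M$, so $P^T_M(x)=M$ and $x\in E(M,f)$. Since $x$ is not periodic, Lemma~\ref{lem:prol2} identifies $E(M,f)$ with the basic set $B=B(M,f)$ and makes $T$ a source side of $x$. I would then reuse the construction in the proof of Theorem~\ref{th:basic}(2): through the semiconjugacy $\phi$ of Theorem~\ref{th:basic-model} to a transitive $m$-periodic map and an application of Lemma~\ref{lem:basic}, one finds a preimage $y'$ of $y$ with $\phi(y')\notin E\cup\End(M')$ and hence $B\subset\sa(y')\subset\sa(y)$. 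Since $B$ is perfect and $x\in B$, once more $x$ is not isolated.

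I expect the difficulties to be organizational rather than deep. The two points needing care are the verification that the whole branch $\{y_i\}$, not merely its limit, lies in $P^T(x)$ straight from the one-sided-neighborhood definition, and the confirmation that the argument of Theorem~\ref{th:basic}(2) survives here even though $x$ is only assumed non-periodic: the sole role of preperiodicity there was to force $P^T(x)$ to be a cycle of intervals, and Lemma~\ref{lem:prol} now supplies that conclusion directly. The wandering-interval case looks the most dangerous but is actually the simplest, because there the branch itself already lands in $\sa(y)$.
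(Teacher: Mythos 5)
Your proof is correct, but it takes a genuinely different route from the paper's. The paper classifies the point $x$ itself: by Theorem~\ref{th:structure}, $x\in\SA(f)\subset\Lambda^1(f)$, and by Blokh's decomposition theorem $x$ lies in a periodic orbit, a basic set, or a solenoidal set; the basic-set case is then dispatched by Hero's lemma together with Theorem~\ref{th:basic}(1), the solenoidal case by Theorem~\ref{th:solenoidal}, and the preperiodic case by citing Theorem~\ref{th:basic}(2) as a black box. You instead classify the local backward dynamics: you show the whole backward branch lies in the one-sided prolongation set $P^T(x)$ and invoke Lemma~\ref{lem:prol}, handling its four alternatives separately. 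The two routes share the heavy machinery (Theorem~\ref{th:solenoidal}, the apparatus of Theorem~\ref{th:basic-model}, Lemmas~\ref{lem:basic} and~\ref{lem:prol2}, Hero's lemma, and perfectness of basic and minimal solenoidal sets), but yours never needs the inclusion $\SA(f)\subset\Lambda^1(f)$ nor the decomposition of $\Lambda^1(f)$, and it treats all non-periodic points uniformly with no separate preperiodic case; the price is that you must inline the argument of Theorem~\ref{th:basic}(2) in the cycle-of-intervals case, since its stated hypothesis (preperiodicity) is unavailable to you. Your key observation there --- that preperiodicity enters the proof of Theorem~\ref{th:basic}(2) only to force $P^T(x)$ to be a cycle of intervals, so once Lemma~\ref{lem:prol} hands you that conclusion the rest runs on non-periodicity alone --- is accurate, and your verifications at the delicate points (the whole branch, not just its limit, lies in $P^T(x)$; the branch avoids $x$ after finitely many terms; a component of $M$ must contain a one-sided neighborhood of $x$) all go through. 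The wandering-interval alternative is indeed disposed of cheaply by Hero's lemma, since then the branch itself lies in $\omega(x)\subset\sa(y)$. In sum: the paper's version is shorter because it can lean on previously proved theorems as stated; yours is independent of the decomposition theorem and makes visible exactly which hypotheses drive the basic-set argument.
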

\begin{proof}Let $z\in s\alpha(y)$ such that $z$ is neither periodic nor preperiodic. Then $z$ is a point of an infinite maximal $\omega$-limit set, i.e.~a basic set or a solenoidal set. This follows from Blokh's Decomposition Theorem, that $\Lambda^1(f)$ is the union of periodic orbits, solenoidal sets and basic sets, and from Theorem~\ref{th:structure}, $z\in \SA(f)\subset\Lambda^1(f)$.  According to~\cite[Lemma 1]{Hero}, when $\sa(y)$ contains a point $z$, it contains its orbit $\Orb(z)$ as well. If $z$ is in a basic set $B$ then, $\Orb(z)\subset B\cap\sa(y)$ is infinite and by Theorem~\ref{th:basic} (1), $B\subset \sa(y)$. Then the point $z$ is not isolated in $\sa(y)$ since $B$ is a perfect set. If $z$ is in a solenoidal set $Q=\bigcap_n\Orb(I_n)$ then, by Theorem~\ref{th:solenoidal}, $\sa(y)\cap Q=S$. Again, the point $z$ is not isolated in $ \sa(y)$ since $S$ is a perfect set.

Let $z\in s\alpha(y)$ such that $z$ is a preperiodic point. By Theorem~\ref{th:basic} (2), there is a basic set $B$ such that $z\in B\subset \sa(y)$. Then the point $z$ is not isolated in $\sa(y)$ since $B$ is a perfect set. \end{proof}

In the previous proof, we have shown that if  a point $z\in s\alpha(y)$ is not periodic then $s\alpha(y)$ contains either a minimal solenoidal set $S$ or a basic set $B$. In both cases, $s\alpha(y)$ has to be uncountable. Therefore we have the following corollary. 
\begin{corollary}
A countable $\sa$-limit set for an interval map is a union of periodic orbits.
\end{corollary}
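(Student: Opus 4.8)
The plan is to deduce this directly from the preceding theorem, or rather from the structural dichotomy exhibited inside its proof. First I would fix a countable $\sa$-limit set $\sa(y)$ and take an arbitrary point $z\in\sa(y)$; the goal is to show that $z$ is periodic, after which the stated conclusion follows with little extra work.

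To prove that $z$ is periodic I would argue by contradiction, reusing the case analysis from the proof of the previous theorem. Suppose $z$ is not periodic. If $z$ is preperiodic, then Theorem~\ref{th:basic}(2) produces a basic set $B$ with $z\in B\subset\sa(y)$. Otherwise $z$ lies in an infinite maximal $\omega$-limit set by Blokh's Decomposition Theorem together with Theorem~\ref{th:structure}, i.e.\ in a basic set or a solenoidal set. In the basic-set case, \cite[Lemma 1]{Hero} gives $\Orb(z)\subset\sa(y)$, so $\sa(y)$ meets the basic set $B$ in an infinite set and Theorem~\ref{th:basic}(1) yields $B\subset\sa(y)$. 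In the solenoidal case, Theorem~\ref{th:solenoidal} gives $\sa(y)\cap Q=S$ with $z\in S$. In every non-periodic case $\sa(y)$ therefore contains a perfect set (either $B$ or the minimal solenoidal set $S$), hence is uncountable, contradicting our hypothesis. So $z$ must be periodic.

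Finally, since $z\in\sa(y)$ was arbitrary, every point of $\sa(y)$ is periodic. By \cite[Lemma 1]{Hero} the set $\sa(y)$ is invariant and contains the full (finite) orbit of each of its points, so $\sa(y)=\bigcup_{z\in\sa(y)}\Orb(z)$ is a union of periodic orbits, as claimed.

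I expect no serious obstacle here. All the genuine content, namely the perfectness of basic and solenoidal sets and their forced inclusion in $\sa(y)$ once an appropriate non-periodic point is present, has already been established in the proof of the preceding theorem; this corollary is essentially a repackaging of that argument under the countability hypothesis, with the only additional remark being that a set of periodic points which is closed under taking orbits is automatically a union of periodic orbits.
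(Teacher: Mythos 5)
Your proposal is correct and follows essentially the same route as the paper: the paper derives this corollary directly from the case analysis in the proof of the isolated-points theorem, observing that any non-periodic point of $\sa(y)$ forces $\sa(y)$ to contain a perfect set (a basic set or a minimal solenoidal set) and hence be uncountable. Your closing remark that invariance via \cite[Lemma 1]{Hero} makes the set of periodic points a union of full periodic orbits is left implicit in the paper but is exactly the right finishing touch.
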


%

\subsection{The interior and the nowhere dense part of a special $\alpha$-limit set}\label{sec:int-nwd}\strut\\\indent
A well-known result by Sharkovsky says that each $\omega$-limit set of an interval map is either a transitive cycle of intervals or a closed nowhere dense set~\cite{Sharkovsky}. What can we say in this regard for $\sa$-limit sets of interval maps? When $\Int(\sa(x))$ is nonempty, Kolyada, Misiurewicz, and Snoha showed that $M=\overline{\Int{\sa(x)}}$ is a cycle of intervals containing $x$, see \cite[Proposition 3.6]{KMS}. We strengthen this result by showing that the non-degenerate components of $\sa(x)$ are in fact closed, and the rest of $\sa(x)$ is nowhere dense. We also get some amount of transitivity.

The following lemma is simple and we leave the proof to the reader.

\begin{lemma}\label{lem:cycle}
Let $M$ be a cycle of intervals for $f$ of period $k$ and let $K$ be any of its components. Then
\begin{enumerate}
\item $f|_M$ is transitive if and only if $f^k|_K$ is transitive.
\item If $L\subset K$ is a cycle of intervals for $f^k$, then $\bigcup_{i<k} f^i(L)$ is a cycle of intervals for $f$.
\item For $x\in K$ and $y\in K\setminus\End(K)$, we have $y\in\sa(x,f)$ if and only if $y\in\sa(x,f^k|_K)$.
\end{enumerate}
\end{lemma}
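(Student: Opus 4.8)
The plan is to prove the three claims of Lemma~\ref{lem:cycle} in the order given, working throughout with the identification that $M=\bigsqcup_{i<k}f^i(K)$ consists of $k$ pairwise disjoint compact intervals permuted cyclically by $f$, with $f^k(K)=K$.

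For claim (1), I would argue both directions using the fact that transitivity of $f|_M$ is equivalent to the existence of a dense orbit, or equivalently that for any two nonempty relatively open sets $U,V\subset M$ some forward image of $U$ meets $V$. First suppose $f^k|_K$ is transitive. Given nonempty relatively open $U,V\subset M$, I would replace $U$ by a small subset lying in a single component $f^i(K)$ and $V$ by a subset in a single component $f^j(K)$; pushing $U$ forward by $f^{k-i}$ lands a nonempty open set in $K$, and similarly $f^{k-j}(V)$ meets $K$, so transitivity of $f^k|_K$ produces an iterate of $f^k$ carrying the first set into the second, which translates back into an iterate of $f$ carrying $U$ into $V$. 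The reverse direction is the restriction of the same bookkeeping: a set open in $K$ is open in $M$, and $f^k$-iterates are a subsequence of $f$-iterates, so transitivity of $f|_M$ restricted to the component $K$ gives transitivity of $f^k|_K$. The only care needed is that images stay inside the correct components, which follows from $f^k(K)=K$ and the cyclic permutation of components.

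For claim (2), the point is purely set-theoretic once the periods are tracked. If $L\subset K$ satisfies $f^{kj}(L)=L$ with $L,f^k(L),\dots,f^{k(j-1)}(L)$ pairwise disjoint, then I would verify that the collection $\{f^i(L): 0\le i<kj\}$ consists of pairwise disjoint compact intervals cyclically permuted by $f$ with $f^{kj}(L)=L$, so that $\bigcup_{i<k}f^i(L)$—more precisely $\Orb(L)=\bigcup_{i<kj}f^i(L)$—is a cycle of intervals for $f$. Disjointness of the images splits into two cases: images $f^i(L),f^{i'}(L)$ with $i\equiv i'\pmod k$ are disjoint because they are distinct $f^k$-images of $L$ inside the same component, and images with $i\not\equiv i'\pmod k$ are disjoint because they lie in distinct components $f^i(K),f^{i'}(K)$ of $M$. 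This is the routine verification the lemma invites us to skip.

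For claim (3), I would use the definition of $\sa$ via backward orbit branches together with the invariance already established. Fix $x\in K$ and $y\in K\setminus\End(K)$. A backward orbit branch of $y$ under $f^k|_K$ is in particular a subsequence of a backward orbit branch of $y$ under $f$ that stays inside $K$, giving the implication $y\in\sa(x,f^k|_K)\Rightarrow y\in\sa(x,f)$ directly. For the converse, suppose a backward orbit branch of $x$ under $f$ has a subsequence converging to $y$. Because $y$ lies in the interior of the component $K$ and $K$ is a component of the invariant set $M$, any preimage-branch point close enough to $y$ must itself lie in $K$; invoking Lemma~\ref{lem:inv-sa} (or directly the invariance of $M$) forces the tail of this branch into $M$, and the indices landing near $y\in K$ must land in $K$ itself. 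Passing to the subsequence of times that are multiples of $k$ then exhibits $y$ as an accumulation point of a backward $f^k|_K$-branch of $x$. The delicate point, and the one I expect to be the main obstacle, is precisely this last step: ensuring that one can extract a genuine backward orbit branch for $f^k|_K$—not merely a preimage sequence—while keeping the accumulation at $y$; this is exactly why $y$ is assumed to avoid $\End(K)$, since an endpoint could be approached from outside $K$ and the restricted map $f^k|_K$ need not see those preimages.
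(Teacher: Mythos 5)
The paper offers no proof of Lemma~\ref{lem:cycle} to compare against --- the authors declare it simple and leave it to the reader --- so your outline can only be judged on its own merits, and on those merits its structure is sound and is surely the routine argument the authors intended. In particular, part (3) is the only place where something could genuinely go wrong, and you isolate it correctly: because $y\in K\setminus\End(K)$ is interior to $K$, a backward $f$-branch of $x$ accumulating at $y$ has points in $\Int(K)$ at arbitrarily late times, so invariance of $M$ forces the \emph{whole} branch (not just a tail --- every branch point is a forward image of points lying near $y$) into $M$; disjointness of the components then puts the branch point at time $m$ in the component $f^{(-m)\bmod k}(K)$, so the times landing in $K$ are exactly the multiples of $k$, and $(x_{kn})_{n\geq 0}$ is a genuine backward branch of $x$ for $f^k|_K$ accumulating at $y$. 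Your closing remark about why the hypothesis $y\notin\End(K)$ cannot be dropped is exactly the right observation. (Minor slip of the pen: in the easy direction you speak of backward branches ``of $y$'' where you mean branches of $x$ accumulating at $y$.)

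Two steps are stated inaccurately, though both are repairable without any new idea. First, in part (1) you assert that $f^{k-i}(U)$ is a nonempty \emph{open} subset of $K$; continuous maps are not open maps, so this can fail as stated. Either pull back instead --- $U'=f^{-i}(U)\cap K$ and $V'=f^{-j}(V)\cap K$ are nonempty and relatively open because $f^i(K)\supseteq U$ and $f^j(K)\supseteq V$, and a point of $f^{kn}(U')\cap V'$ yields a point of $U$ whose image under a suitable nonnegative power of $f$ lies in $V$ --- or first observe that transitivity of $f^k|_K$ forbids $f$ from being constant on any nonempty open subset of $M$ (otherwise $f^k$ would be constant on a nonempty open subset of $K$, which is incompatible with transitivity), so that $f^{k-i}(U)$ at least \emph{contains} a nonempty open set, which suffices. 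Second, in part (2) your justification of disjointness in the case $i\equiv i'\pmod k$ is off: when $k\nmid i$, the set $f^i(L)$ is not an $f^k$-image of $L$, so the phrase ``distinct $f^k$-images of $L$'' does not apply to it. The correct two-line fix is to push a hypothetical common point forward: if $z\in f^i(L)\cap f^{i+kb}(L)$ with $0<b<j$ (where $j$ is the period of $L$ under $f^k$ and $0\le i<kj$), then $f^{kj-i}(z)\in f^{kj}(L)\cap f^{kj+kb}(L)=L\cap f^{kb}(L)$, contradicting the disjointness of the $f^k$-images of $L$. With these two repairs your proposal is a complete and correct proof.
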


\begin{theorem}\label{th:M-nd}
A $\sa$-limit set for an interval map $f$ is either nowhere dense, or it is the union of a cycle of intervals $M$ for $f$ and a nowhere dense set. Moreover, $M$ is either a transitive cycle, or it is the union of two transitive cycles.
\end{theorem}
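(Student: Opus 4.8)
The plan is to read off the structure of $\sa(x)$ from the structure of $\Lambda^1(f)$. By Theorem~\ref{th:structure} we have $\sa(x)\subset\Lambda^1(f)$, and by Blokh's decomposition theorem $\Lambda^1(f)$ is the union of the periodic orbits, the solenoidal sets, and the basic sets of $f$. A solenoidal set and a periodic orbit are nowhere dense, while a basic set, being a maximal $\omega$-limit set, is by Sharkovsky's theorem either a transitive cycle of intervals or nowhere dense. Thus every point of $\sa(x)$ lies either in a transitive cycle of intervals contained in $\sa(x)$ (the containment coming from Corollary~\ref{cor:tc} as soon as $\sa(x)$ meets such a cycle infinitely often) or in a nowhere dense maximal $\omega$-limit set. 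I would therefore let $M$ be the union of all transitive cycles of intervals contained in $\sa(x)$ and set $N=\sa(x)\setminus M$, aiming to prove: (i) $M$ is the union of at most two transitive cycles, so that $M$ is itself a cycle of intervals with $M\subset\sa(x)$; and (ii) $N$ is nowhere dense. When $M=\emptyset$ this says $\sa(x)$ is nowhere dense, and when $M\neq\emptyset$ it gives the desired decomposition $\sa(x)=M\cup N$, with the ``moreover'' clause being exactly (i).

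For (i), first note that by Corollary~\ref{cor:tc} every transitive cycle $T\subset\sa(x)$ contains the point $x$, so I must bound how many distinct transitive cycles can pass through the single point $x$. The key step is that two transitive cycles of intervals cannot properly cross: if they overlap in a nondegenerate interval, comparison of their interiors forces one to be contained in the other, so a smaller one contributes nothing new to $\overline{\Int\sa(x)}$. Consequently the transitive cycles through $x$ that actually contribute to $M$ are organized by the two sides of $x$: at most one maximal transitive cycle approaches $x$ from the left and at most one from the right, giving at most two in total, and exactly one when $x$ lies in the interior of a component. This realizes $M$ as one transitive cycle, or as the union of two transitive cycles meeting at the common endpoint $x$ (a periodic orbit). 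Here Lemma~\ref{lem:cycle} together with the mixing/non-mixing dichotomy for transitive cycles (Proposition~\ref{mixing} and the surrounding discussion from~\cite{Ruette}) lets me pass freely between $f|_M$ and the first-return maps $f^k|_K$ on components and phrase transitivity correctly. Since $\Int M\subset\sa(x)$ and, by~\cite[Proposition 3.6]{KMS}, $\overline{\Int\sa(x)}$ is a cycle of intervals whenever nonempty, I would finally verify $M=\overline{\Int\sa(x)}$, confirming $M\subset\sa(x)$.

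The heart of the argument, and the step I expect to be the main obstacle, is (ii): showing that $\sa(x)$ cannot be dense with empty interior in any subinterval. Concretely I would prove the claim that if $\sa(x)$ is dense in an open interval $J$, then $\Int\sa(x)\cap J\neq\emptyset$; applied to $J$ disjoint from $\Int M$ this forces $N$ (and, when $M=\emptyset$, all of $\sa(x)$) to be nowhere dense. Density of $\sa(x)$ in $J$ gives, via $\overline{\sa(x)}\subset\overline{\Rec}=\overline{\Per}$ from Theorem~\ref{th:structure}, that periodic points are dense in $J$, and by the isolated-points theorem proved just above, each non-periodic point of $\sa(x)\cap J$ drags an entire basic set or minimal solenoidal set into $\sa(x)$. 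The aim is to manufacture from this abundance of recurrent structure a transitive cycle of intervals whose interior meets $J$: density then makes $\sa(x)$ meet it infinitely often, and Corollary~\ref{cor:tc} places the whole cycle inside $\sa(x)$, producing interior in $J$. The delicate point is excluding the competing scenario in which $\sa(x)\cap J$ is dense but meager (possibly even countable), accumulating only on nowhere dense basic and solenoidal sets. I would close this off by choosing a point $z\in J$ approached by backward branches of $x$ from a fixed side $T$ (for instance a boundary point of a would-be interval of periodic points, or a point approached from both sides) and applying the prolongation-set machinery of Lemmas~\ref{lem:prol} and~\ref{lem:prol2}: ruling out the single-orbit, wandering-interval, and solenoidal alternatives identifies $P^T(z)$ as a cycle of intervals, hence as a basic set $B(M',f)$, which by the local density of $\sa(x)$ must be thick and therefore a transitive cycle with interior meeting $J$.

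Finally I would assemble the pieces: $\sa(x)=M\cup N$ with $M$ a transitive cycle or a union of two transitive cycles by (i), and $N$ nowhere dense by (ii), which also covers the case $M=\emptyset$. The only genuinely hard input is the non-density claim in (ii); everything else is bookkeeping built on Theorems~\ref{th:solenoidal} and~\ref{th:basic}, Corollary~\ref{cor:tc}, Lemma~\ref{lem:cycle}, and the cited decomposition and transitivity-structure results.
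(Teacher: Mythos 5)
Your overall architecture (a transitive-cycle part plus a nowhere dense remainder, with Corollary~\ref{cor:tc} forcing every transitive cycle inside $\sa(x)$ to pass through $x$, and disjointness of interiors giving the bound of two cycles) matches the paper in spirit, but there is a genuine gap exactly at the step you yourself flag as the heart of the argument, and the prolongation-set sketch does not repair it. First, even if your claim (ii) were proved as stated, it would not give that $N=\sa(x)\setminus M$ is nowhere dense. Suppose $N$ is dense in an open interval $J$ disjoint from $\Int(M)$; your claim produces an open interval $J_1\subseteq \sa(x)\cap J$. But with your definition of $M$ (the union of transitive cycles contained in $\sa(x)$), nothing says $J_1\subseteq M$: since $J_1\cap\Int(M)=\emptyset$, the interval $J_1$ meets the closed set $M$ in at most finitely many boundary points, so $N$ contains $J_1$ minus a finite set and remains dense there --- no contradiction results. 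What you actually need is the stronger statement $\Int(\sa(x))\subseteq M$, i.e.\ that every open subset of $\sa(x)$ is covered by transitive cycles that are themselves contained in $\sa(x)$. You defer this to ``verify $M=\overline{\Int\sa(x)}$'' via \cite[Proposition 3.6]{KMS}, but that proposition only makes $\overline{\Int\sa(x)}$ a cycle of intervals; decomposing that cycle into one or two transitive cycles lying inside $\sa(x)$ is the real content of the theorem and is never argued. Second, the prolongation-set sketch for (ii) ends with an unjustified leap: ``by the local density of $\sa(x)$ [the basic set] must be thick and therefore a transitive cycle.'' A basic set $B(M',f)$ is in general nowhere dense, and density of $\sa(x)$ in $J$ does not transfer to density of any single basic set in a subinterval of $J$: a priori $\sa(x)\cap J$ could be spread over infinitely many nowhere dense pieces (periodic points, countably many basic sets, a solenoidal set), which is precisely the ``competing scenario'' you name but never exclude.

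For comparison, the paper's proof shows how to avoid this impasse entirely by choosing $M$ differently: take $M$ to be the union of the non-degenerate components of $\overline{\sa(x)}$. Then $\sa(x)\setminus M$ lies in the boundary of the closed set $\overline{\sa(x)}$, so its nowhere-density is free, and all the work is concentrated where tools exist. Each component $K$ of $M$ satisfies $f^n(K)=K$ for some $n$ (connectedness, invariance of $\overline{\sa(x)}$, and density of periodic points in $K$ coming from $\sa(x)\subseteq\overline{\Per(f)}$ in Theorem~\ref{th:structure}); Lemma~\ref{lem:inv-sa} puts $x$ in $\Orb(K)$ for every component, so $M=\Orb(K)$ is a single cycle of intervals containing $x$; then the structure theorem for interval maps with dense periodic points \cite[Theorem 3.9]{Ruette} writes $K$ as a union of transitive cycles and periodic orbits of $f^n|_K$, Corollary~\ref{cor:tc} forces each transitive cycle to contain $x$ (hence at most two, by disjointness of interiors), and Lemma~\ref{lem:inv-sa} rules out a residual interval of periodic points; finally Corollary~\ref{cor:tc} gives $M\subseteq\sa(x)$. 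Your side-of-$x$ counting argument survives intact inside that framework, but without the closure-components trick and the dense-periodic-points structure theorem, your outline does not close.
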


\begin{proof}
Consider a limit set $\sa(x)$ for an interval map $f:[0,1]\to[0,1]$. Let $M$ be the union of the non-degenerate components of $\overline{\sa(x)}$. If $M=\emptyset$ then $\sa(x)$ is nowhere dense. Otherwise $M$ must be a finite or countable union of closed intervals, and since $M$ contains the interior of the closure of the $\sa$-limit set we know that $\sa(x)\setminus M$ is nowhere dense.

Let $K$ be any component of $M$. By Theorem~\ref{th:structure} we have $K\subset\overline{\Per(f)}$, and therefore periodic points are dense in $K$. Let $n\geq 1$ be minimal such that $f^n(K)\cap K\neq\emptyset$. Since $f^n(K)$ is connected and $K$ is a component of the invariant set $\overline{\sa(x)}$, we know that $f^n(K)\subseteq K$. Since periodic points are dense in $K$ we must have $f^n(K)=K$. Therefore $\Orb(K)$ is a cycle of intervals, and by Lemma~\ref{lem:inv-sa} we get $x\in \Orb(K)$. Since this holds for every component $K$ of $M$, we must have $\Orb(K)=M$, i.e. $M$ is a cycle of intervals and $x\in M$.

From now on we take $K$ to be the component of $M$ containing $x$. Put $g=f^n|_K$. Then $g:K\to K$ is an interval map with a dense set of periodic points. There is a structure theorem for interval maps with a dense set of periodic points~\cite[Theorem 3.9]{Ruette}%
\footnote{In fact, \cite[Theorem 3.9]{Ruette} tells us that all the transitive cycles for $g$ have period at most 2, and the periodic orbits not contained in transitive cycles also have period at most $2$. Some of this extra information can be shown quite easily; it comes up again in our proof of Theorem~\ref{th:at-least-3}.}
which tells us that $K$ is a union of the transitive cycles and periodic orbits of $g$,
\begin{equation*}
K=\left(\bigcup \Big\{ L~:~L\text{ is a transitive cycle for $g$} \Big\} \right) \cup \Per(g)
\end{equation*}

By Lemma~\ref{lem:cycle}, $\sa(x,g)$ contains a dense subset of $K$. By Corollary~\ref{cor:tc} each transitive cycle $L\subseteq K$ for $g$ must contain $x$. Since transitive cycles have pairwise disjoint interiors, $g$ has at most two transitive cycles. If their union is not $K$, then $K$ must contain a non-degenerate interval of periodic points of $g$. But by an easy application of Lemma~\ref{lem:inv-sa}, no $\sa$-limit set can contain a dense subset of an interval of periodic points. Therefore $K$ is the union of one or two transitive cycles for $g$. By Lemma~\ref{lem:cycle}, $M$ is the union of one or two transitive cycles for $f$.

Finally, if $L$ is one of the (at most two) transitive cycles for $f$ that compose $M$, then by Corollary~\ref{cor:tc} we have $\sa(x)\supseteq L$. Therefore $M\subseteq \sa(x)$.
\end{proof}

\begin{remark}
If $\sa(x)$ contains a cycle of intervals $M$, then $\sa(x)$ is in fact a closed set, but we are not yet ready to prove this fact. See Theorem~\ref{th:not-closed} below.
\end{remark}


\subsection{Transitivity and points with $\sa(x)=[0,1]$}\label{sec:trans-full}\strut\\\indent
Let $f:[0,1]\to [0,1]$ be an interval map. We say that the point $x$ has a \emph{full} $\sa$-limit set if $\sa(x)=[0,1]$.  Kolyada, Misiurewicz, and Snoha proved that when $f$ is transitive, all points $x\in[0,1]$ with at most three exceptions have a full $\sa$-limit set. Conversely, they conjectured that if at least two points $x\in[0,1]$ have full $\sa$-limit sets, then $f$ is transitive. The conjecture was not quite right; the correct result is as follows:

\begin{theorem}\label{th:at-least-3}
An interval map $f:[0,1]\to[0,1]$ is transitive if $\sa(x_i)=[0,1]$ for at least three distinct points $x_1,x_2,x_3$.
\end{theorem}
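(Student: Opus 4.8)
The plan is to apply Theorem~\ref{th:M-nd} to a full $\sa$-limit set and then reduce to a counting argument. Since $\sa(x_1)=[0,1]$ is not nowhere dense, Theorem~\ref{th:M-nd} forces $[0,1]$, which is $\overline{\Int(\sa(x_1))}$, to be a cycle of intervals $M=[0,1]$ that is either a single transitive cycle or the union of two transitive cycles. Because $[0,1]$ is connected, a single transitive cycle must have one component and period $1$, so that $f|_{[0,1]}=f$ is transitive and we are done. Thus the entire content of the theorem is to rule out the remaining case, namely that $[0,1]=L_1\cup L_2$ is the union of two transitive cycles $L_1,L_2$ with pairwise disjoint interiors.

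Assuming this for contradiction, I would first locate the three points. Since $\sa(x_i)=[0,1]\supseteq L_j$ contains infinitely many points of the transitive cycle $L_j$, Corollary~\ref{cor:tc} gives $x_i\in L_j$ for each $i\in\{1,2,3\}$ and each $j\in\{1,2\}$; hence all three distinct points lie in $L_1\cap L_2$. The theorem then reduces to the topological claim that $|L_1\cap L_2|\le 2$, which contradicts the existence of three distinct points.

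To bound $|L_1\cap L_2|$, I would use that $L_1,L_2$ have disjoint interiors and cover $[0,1]$, so their components strictly alternate in ``color'' along $[0,1]$ (two components of the same cycle cannot be adjacent, since between them lies a gap filled by the other cycle), and $L_1\cap L_2$ consists exactly of the internal meeting points where a component of $L_1$ abuts a component of $L_2$. By the structure theorem for maps with dense periodic points quoted in the footnote to Theorem~\ref{th:M-nd} (from \cite[Theorem 3.9]{Ruette}), each transitive cycle here has period at most $2$, so each $L_j$ has at most two components, giving at most four components in total and hence at most three meeting points. I would then eliminate the configuration with exactly four components by a continuity argument: writing them left to right as $A_1,B_1,A_2,B_2$ with $f$ swapping $A_1\leftrightarrow A_2$ and $B_1\leftrightarrow B_2$, the middle meeting point $q$ between $B_1$ and $A_2$ would need to satisfy both $f(q)\in f(B_1)=B_2$ (the rightmost component) and $f(q)\in f(A_2)=A_1$ (the leftmost component), which are disjoint. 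This rules out four components, leaving at most two meeting points, so $|L_1\cap L_2|\le 2$, yielding the desired contradiction.

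The main obstacle is precisely this last step. The clean way to handle it is the observation that the meeting points are mapped among themselves: each point of $L_1\cap L_2$, regarded as a point of $L_1$, has image in $L_1$, and regarded as a point of $L_2$ has image in $L_2$, so $f(L_1\cap L_2)\subseteq L_1\cap L_2$; combined with the forced long-range jump of the middle meeting point under a period-$2$ swap, this produces the contradiction. Some care is also needed to confirm that the single-transitive-cycle case genuinely gives transitivity of $f$ on all of $[0,1]$ rather than on a proper subsystem, which follows from connectedness forcing period $1$.
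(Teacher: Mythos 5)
Your proposal is correct, and its skeleton is the same as the paper's: apply Theorem~\ref{th:M-nd} to reduce to the case $[0,1]=L_1\cup L_2$ with $L_1,L_2$ transitive cycles, use Corollary~\ref{cor:tc} to place every point with a full $\sa$-limit set in $L_1\cap L_2$, and then show $|L_1\cap L_2|\le 2$ by bounding the number of components. The one genuine divergence is how you obtain the bound of two components per cycle: you import it from \cite[Theorem 3.9]{Ruette} via the footnote to Theorem~\ref{th:M-nd}, whereas the paper proves it directly and elementarily --- if $L_1$ had components $A_1<\cdots<A_n$ with $n\ge 3$ and induced cyclic permutation $\sigma$, then some $i$ satisfies $|\sigma(i)-\sigma(i+1)|\ge 2$, and the component $B$ of $L_2$ lying between $A_i$ and $A_{i+1}$ would, by the intermediate value theorem, map over a component of $L_1$ strictly between $A_{\sigma(i)}$ and $A_{\sigma(i+1)}$, contradicting invariance of $L_2$. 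The paper's version buys self-containedness, and the same trick is recycled verbatim in Claim 2 of the proof of Lemma~\ref{lem:cyclesmeet}; your citation is legitimate within this paper's framework, but to invoke Ruette's structure theorem for $f$ itself you should state explicitly that $\Per(f)$ is dense in $[0,1]$ (which holds because each $L_j$ is a transitive cycle, so periodic points are dense in each). Your elimination of the four-component configuration --- the middle meeting point would have to land simultaneously in the leftmost and rightmost components, which are disjoint --- is exactly the paper's argument, and your final count of one or two intersection points in the surviving configurations also coincides with the paper's conclusion (and with Corollary~\ref{cor:1-or-2}).
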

\begin{proof}
Suppose that $f$ is not transitive. We will prove that at most two points have a full $\sa$-limit set. Suppose there is at least one point $x$ with $\sa(x)=[0,1]$. By Theorem~\ref{th:M-nd} there are two transitive cycles $L,L'$ for $f$ such that $[0,1]=L\cup L'$, and by Corollary~\ref{cor:tc} every point with a full $\sa$-limit set belongs to $L\cap L'$. We will show that the cardinality of $L\cap L'$ is at most two.

Let $A_1, A_2, \ldots, A_n$ be the components of $L$, numbered from left to right in $[0,1]$. Let $\sigma:\{1,\ldots,n\} \to \{1,\ldots,n\}$ be the cyclic permutation defined by $f(A_i)=A_{\sigma(i)}$. If $n\geq3$ then there must exist $i$ such that $|\sigma(i)-\sigma(i+1)|\geq2$, so there is some $A_j$ strictly between $A_{\sigma(i)}, A_{\sigma(i+1)}$. Let $B$ be the component of $L'$ between $A_i$ and $A_{i+1}$, see Figure~\ref{fig:proof}. By the intermediate value theorem, $f(B)\supset A_j$. This contradicts the invariance of $L'$. Therefore $L$ has at most 2 components. For the same reason $L'$ has at most two components. Moreover, $L, L'$ cannot both have 2 components; otherwise the middle two of those four components have a point in common, but their images do not, again see Figure~\ref{fig:proof}.

There are two cases remaining. $L, L'$ can have one component each, and then $L\cap L'$ has cardinality one. Otherwise, one of the cycles, say $L$, has two components, while $L'$ has only one, and then $L\cap L'$ has cardinality two.
\end{proof}

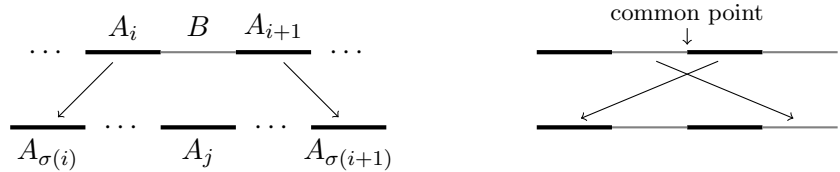
\begin{figure}[htb!!]
\begin{tikzpicture}
\draw [ultra thick] (1,1) -- (2,1) (3,1) -- (4,1);
\draw [thick, gray] (2,1) -- (3,1);
\node at (0.5,1) {$\cdots$};
\node at (4.5,1) {$\cdots$};
\node (b) at (2.5,1) {};
\node (ai) at (1.5,1) {};
\node (aip1) at (3.5,1) {};
\node [inner sep=0pt, minimum size=1pt, label=above:{$A_i$}] at (ai) {};
\node [inner sep=0pt, minimum size=1pt, label=above:{$A_{i+1}$}] at (aip1) {};
\node [inner sep=0pt, minimum size=1pt, label=above:{$B\vphantom{A_{i}}$}] at (b) {};
\draw[ultra thick] (0,0) -- (1,0) (2,0) -- (3,0) (4,0) -- (5,0);
\node at (1.5,0) {$\cdots$};
\node at (3.5,0) {$\cdots$};
\node (asi) at (0.5,0) {};
\node (aj) at (2.5,0) {};
\node (asip1) at (4.5,0) {};
\node [inner sep=0pt, minimum size=1pt, label=below:{$A_{\sigma(i)}$}] at (asi) {};
\node [inner sep=0pt, minimum size=1pt, label=below:{$A_j$}] at (aj) {};
\node [inner sep=0pt, minimum size=1pt, label=below:{$A_{\sigma(i+1)}$}] at (asip1) {};
\draw[->] (ai) -- (asi);
\draw[->] (aip1) -- (asip1);
\begin{scope}[shift={(7,0)}]
\draw [ultra thick] (0,1) -- (1,1) (2,1) -- (3,1) (0,0) -- (1,0) (2,0) -- (3,0);
\draw [thick, gray] (1,1) -- (2,1) (3,1) -- (4,1) (1,0) -- (2,0) (3,0) -- (4,0);
\node [inner sep=8pt] (b1) at (1.3,1) {};
\node [inner sep=8pt] (a2) at (2.7,1) {};
\node [inner sep=8pt] (a1) at (0.3,0) {};
\node [inner sep=8pt] (b2) at (3.7,0) {};
\draw [->] (b1) -- (b2);
\draw [->] (a2) -- (a1);
\node [inner sep=1pt] (lab) at (2,1.5) {\footnotesize common point};
\draw [->] (lab) -- (2,1.1);
\end{scope}
\end{tikzpicture}
\caption{Diagrams for the proof of Theorem~\ref{th:at-least-3}}
\label{fig:proof}
\end{figure}

In the course of the proof we have also shown the following result:

\begin{corollary}\label{cor:1-or-2}
If $f:[0,1]\to [0,1]$ has one or two points with full $\sa$-limit sets, but not more, then $[0,1]$ is the union of two transitive cycles of intervals.
\end{corollary}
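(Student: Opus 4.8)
The plan is to recognize that Corollary~\ref{cor:1-or-2} is not an independent result but a direct harvest of the case analysis already completed inside the proof of Theorem~\ref{th:at-least-3}. Assuming that $f$ has at least one point $x$ with a full $\sa$-limit set, Theorem~\ref{th:M-nd} applies to $\sa(x)=[0,1]$: since $[0,1]$ is certainly not nowhere dense, the theorem forces $[0,1]$ to be the union of a cycle of intervals $M$ (with $[0,1]\setminus M$ nowhere dense) together with a nowhere dense set, and moreover $M$ is either a single transitive cycle or the union of two transitive cycles. The first task is therefore to rule out the ``single transitive cycle'' alternative under the standing hypothesis that $f$ is \emph{not} transitive, and to confirm that $M$ fills out all of $[0,1]$.

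First I would argue that $M=[0,1]$. Because $M=\overline{\Int\overline{\sa(x)}}$ (the union of the non-degenerate components of $\overline{\sa(x)}=[0,1]$) and the complement $[0,1]\setminus M$ is nowhere dense, while $[0,1]$ itself is a single non-degenerate interval, the only non-degenerate component of $\overline{\sa(x)}$ is $[0,1]$ itself; hence $M=[0,1]$. Next I would exclude the possibility that $M$ is a single transitive cycle: if $[0,1]$ were a transitive cycle of period $k$, then by the definition of a cycle of intervals its $k$ components would be pairwise disjoint and their union would be $[0,1]$, which is connected, forcing $k=1$ and making $f$ itself transitive on $[0,1]$. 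This contradicts the hypothesis that one or two (but no more) points have full $\sa$-limit sets, because the Kolyada--Misiurewicz--Snoha result cited just before Theorem~\ref{th:at-least-3} guarantees that a transitive map has all but at most three points with full $\sa$-limit set, in particular more than two such points. Therefore $M=[0,1]$ must be the union of \emph{two} transitive cycles $L,L'$.

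The remaining content is simply to observe that the counting argument already carried out in the proof of Theorem~\ref{th:at-least-3} shows that this configuration $[0,1]=L\cup L'$ yields at most two points in $L\cap L'$, and that by Corollary~\ref{cor:tc} every point with a full $\sa$-limit set lies in $L\cap L'$. Thus the number of such points is automatically one or two, consistent with the hypothesis, and the conclusion ``$[0,1]$ is the union of two transitive cycles'' is exactly what the two-cycle branch delivers. Since the hypothesis explicitly grants at least one point with a full $\sa$-limit set, Theorem~\ref{th:M-nd} is applicable and the whole argument goes through.

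The main obstacle — really the only delicate point — is making sure the single-transitive-cycle case is genuinely impossible rather than merely undesirable. The clean way to dispatch it is the connectivity observation above (a cycle of intervals of period $k\geq2$ has disconnected union, so it cannot equal $[0,1]$), combined with the fact that a transitive $f$ on $[0,1]$ would produce strictly more than two full-$\sa$ points. I would phrase this explicitly to avoid the reader suspecting a gap, and otherwise the proof can be stated in a few lines, since the heavy lifting (the component-counting and the appeal to Corollary~\ref{cor:tc}) was already done in Theorem~\ref{th:at-least-3}.
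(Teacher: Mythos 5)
Your proof is correct and takes essentially the same route as the paper, which obtains this corollary directly from the case analysis inside the proof of Theorem~\ref{th:at-least-3}: non-transitivity of $f$ (forced by the Kolyada--Misiurewicz--Snoha result, since a transitive map has all but at most three points with full $\sa$-limit sets), then Theorem~\ref{th:M-nd} and Corollary~\ref{cor:tc} to produce the two transitive cycles covering $[0,1]$. The only difference is that you spell out two steps the paper leaves implicit---that $M$ must equal $[0,1]$ and that the single-transitive-cycle alternative is excluded because a cycle of intervals equal to the connected set $[0,1]$ has period one---which is a harmless elaboration, not a different argument.
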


Corollary~\ref{cor:1-or-2} corrects the second part of Conjecture 3 (originally \cite[Conjecture 4.14]{KMS}) to allow for two points with a full $\sa$-limit set%
\footnote{Incidentally, when there is exactly one point with a full $\sa$-limit set, the conclusion of the conjecture holds as stated in~\cite{KMS}: there is $c\in(0,1)$ such that such that $f|_{[0,c]}$ and $f|_{[c,1]}$ are both transitive.}%
. 
Both possibilities from the corollary are shown in Figure~\ref{fig:full}. One of the interval maps shown has exactly one point with a full $\sa$-limit set, and the other has exactly two.

\begin{figure}[htb!!]
\includegraphics[width=.2\textwidth]{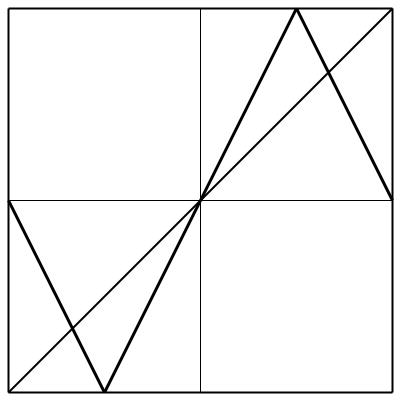}\hspace{.1\textwidth}\includegraphics[width=.2\textwidth]{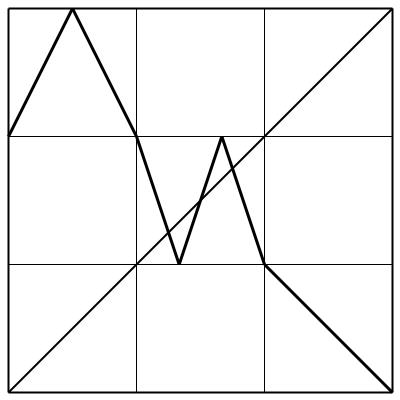}
\caption{Maps for which $\sa(x)=[0,1]$ for only 1 or 2 points $x$.}\label{fig:full}
\end{figure}

\subsection{Special $\alpha$-limit sets containing a common open set}\label{sec:common}\strut\\\indent
Now we study the $\sa$-limit sets that contain a given transitive cycle of intervals. We get a sharpening of Theorem~\ref{th:basic} in the case when $B=M$, i.e. when our basic set is itself a transitive cycle of intervals.

Let $M\subset[0,1]$ be a transitive cycle of intervals for $f$. For the reader's convenience, we recall some definitions from Section~\ref{sec:basic}. We write $\End(M)$ for the endpoints of the connected components of $M$ and refer to these points simply as \emph{endpoints of $M$}. The main role in our analysis is played by the set of \emph{exceptional points of $M$}
\begin{equation*}
E := M \setminus \bigcap_U \bigcup_{n=1}^\infty f^n(U),
\end{equation*}
where $U$ ranges over all relatively open nonempty subsets of $M$. It is known that $E$ is finite; it can contain some endpoints and at most one non-endpoint from each component of $M$. It is also known that $E$ and $M\setminus E$ are both invariant under $f$, see~\cite{Ruette}. Endpoints of $M$ in $E$ are called \emph{non-accessible endpoints}, as explained in Section~\ref{sec:basic}. From~\cite[Proposition 3.10]{KMS} it follows that $\sa(x)\supseteq M$ for all $x\in M\setminus E$. On the other hand, if $x\in E$, then its $\sa$-limit set is disjoint from the interior of $M\setminus E$ by Lemma~\ref{lem:inv-sa}. Therefore we have another characterization of $E$ using $\sa$-limit sets, $E=\{x\in M ~:~ \sa(x)\not\supseteq M\}$.

\begin{theorem}\label{th:at-most-3}
Let $M$ be a transitive cycle of intervals for $f:[0,1]\to[0,1]$ and let $E$ be its set of exceptional points.
\begin{enumerate}
\item Each point $x\in M \setminus \left( E \cup \End(M) \right)$ has the same $\sa$-limit set.
\item At most three distinct $\sa$-limit sets of $f$ contain $M$.
\end{enumerate}
\end{theorem}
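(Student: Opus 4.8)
The plan is to describe $\sa(x)$ for all points with $\sa(x)\supseteq M$. By the characterization $E=\{x\in M:\sa(x)\not\supseteq M\}$ recalled just above the theorem, together with Lemma~\ref{lem:inv-sa} applied to the invariant set $M$ (which has nonempty interior), these are exactly the points of $M\setminus E$. Two structural facts will drive everything. First, for a point $x\in M\setminus(E\cup\End(M))$, Theorem~\ref{th:M-nd} and the disjointness of the interiors of distinct transitive cycles force the interval part of $\sa(x)$ to be $M$ itself, so $\sa(x)=M\cup N_x$ with $N_x:=\sa(x)\setminus M$ nowhere dense; and since $\sa(x)$ contains a cycle of intervals it is closed (Theorem~\ref{th:not-closed}). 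Second, because $f(M)=M$, no point outside $M$ has a preimage inside $M$, so a backward orbit branch that ever leaves $M$ never returns. Consequently every $z\in N_x$ is the limit of a backward orbit branch lying, from some index on, entirely in the open set $[0,1]\setminus M$; read forward, this tail passes through the ``launch set'' $\mathcal{E}:=f^{-1}(M)\setminus M$ exactly as it enters $M$. Hence $z\in\sa(w)$ for some $w\in\mathcal{E}$ with $f(w)\in M$, and $w$ is a preimage of $x$.

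For part (1) it suffices by symmetry to prove $N_x\subseteq N_y$ for two generic points $x,y$. Fix $z\in N_x$ with a launch point $w\in\mathcal E$, $f(w)=u\in M$. The construction inside the proof of Lemma~\ref{lem:basic} shows that a generic point $y$ has preimages dense in $M$; I would use it to produce a preimage $u'$ of $y$ in $M$ arbitrarily close to $u$. Since $f$ carries a neighborhood of $w$ onto a one-sided neighborhood of $u$, choosing $u'$ on the correct side and invoking the intermediate value theorem, I can pull $u'$ back to a launch point $w'\in\mathcal E$ close to $w$ that is a preimage of $y$. Finally I would \emph{shadow} the exterior branch realizing $z$ from $w$ by a branch from $w'$, producing accumulation points of backward branches of $y$ arbitrarily close to $z$; as $\sa(y)$ is closed this gives $z\in\sa(y)$. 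The hard part is this shadowing: along a noninvertible interval map one must track, step by step, on which side of each visited point the perturbed preimage sits, so that the perturbed branch still accumulates near $z$. The saving grace is that closedness of $\sa(y)$ turns exact shadowing into merely approximate shadowing.

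For part (2) I would run the same analysis over all of $M\setminus E$. Since preimages of any point of $M$ are dense in $M$ by transitivity, the construction of part (1), used in reverse, yields $S_0\subseteq\sa(e)$ for every accessible endpoint $e$, where $S_0$ denotes the common generic value from part~(1); so the only question is how many strictly larger sets can occur. Re-examining the transfer argument, the unique way it fails to give $\sa(e)\subseteq S_0$ is when a launch point $w$ for $e$ satisfies $f(w)\in\End(M)$: at an endpoint of $M$ the preimages of a generic point accumulate from the interior side only, which may be the wrong side, so such an excursion cannot be copied by generic points. Thus the excess of $\sa(e)$ over $S_0$ is governed by the finitely many launch points lying over endpoints of $M$, and by the cyclic action of $f$ on the components these excursions organize into at most two forward-orbit classes — intuitively the ``left'' and ``right'' sides of the cycle, with the transitive non-mixing case refined by the central periodic orbit $c_0,\dots,c_{m-1}$ described in Section~\ref{sec:basic}. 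Nailing down this bookkeeping, so that $S_0$ together with at most two endpoint classes exhausts the possibilities, is the main obstacle for part~(2).
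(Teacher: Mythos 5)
Your reduction is sound up to a point: the characterization $\sa(x)\supseteq M\Leftrightarrow x\in M\setminus E$, the observation that a backward branch which leaves $M$ can never return, and the resulting ``launch point'' decomposition of $\sa(x)\setminus M$ are all correct (and your appeal to Theorem~\ref{th:not-closed} for closedness, which is proved independently of this theorem, is not circular). But the argument collapses exactly at the step you yourself flag as ``the hard part'': shadowing an exterior backward branch of $x$ by a backward branch of $y$ issued from a nearby launch point $w'$. No mechanism is offered for this, and no step-by-step mechanism can work, because preimages do not vary continuously with the base point under a noninvertible map: if some $w_{i+1}$ in the original branch is a fold (critical) point, then points on one side of $w_i$ have no preimage anywhere near $w_{i+1}$, and this obstruction can recur at infinitely many steps with no uniform side-choice available. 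Closedness of $\sa(y)$ does not rescue you, since producing \emph{any} point of $\sa(y)$ within $\epsilon$ of $z$ already requires a backward branch of $y$ accumulating $\epsilon$-close to $z$ --- which is precisely what is missing. Part (2) has the same status: the claims that the only obstructions are launch points over $\End(M)$ and that these organize into ``at most two classes'' are exactly the content that needs proof and are not supplied.

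The paper's proof replaces shadowing by a mechanism your outline never uses: forward orbits of one-sided neighborhoods of the limit point. If a branch of $x$ accumulates at $z\notin M$ from the right, then for every $\epsilon>0$ the set $\Orb((z,z+\epsilon))$ is a weak cycle of intervals (Lemma~\ref{lem:wc}); being forward invariant and containing points of the branch of arbitrarily large index, it contains the \emph{whole} branch, hence $x$. By Lemma~\ref{lem:cyclesmeet}(2) these weak cycles have period at most twice that of $M$, so as $\epsilon$ decreases the periods stabilize, and the stabilized set $N=\Orb((z,z+\delta))$ coincides with $\Orb((z,z+\epsilon))$ for all small $\epsilon$; consequently every point of $N\setminus\{z\}$ can be pulled back into arbitrarily small right-hand neighborhoods of $z$, i.e. $N\setminus\{z\}\subseteq\saBasin(z)$, and no shadowing of the original branch is ever needed. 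Lemma~\ref{lem:cyclesmeet}(1) then supplies the dichotomy that drives both parts: since $N\cap M\neq\emptyset$, either $N\supseteq M\setminus E$, so that $z\in\sa(x')$ for every $x'\in M\setminus E$ (which proves part (1), taking $x\notin\End(M)$ to exclude the other alternative), or else $N\cap M$ is a periodic orbit contained in $\End(M)$; the latter is the only way a point of $M\setminus E$ can have a $\sa$-limit set different from the common one, and $\End(M)$ carries at most two periodic orbits, all of whose points share a single $\sa$-limit set --- hence at most three sets in total (part (2)). If you want to salvage your outline, this interval-orbit argument is the ingredient to import; the launch-point picture alone cannot close either gap.
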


We will see in the course of the proof that if $\sa(x')\supseteq M$ is distinct from the $\sa$-limit set described in part (1), then $x'$ belongs to a periodic orbit contained in $\End(M)$. Since there are at most two such periodic orbits in $\End(M)$ we get part (2).

Before giving the proof we discuss some consequences of this theorem.

\begin{corollary}\label{cor:at-most-3}
At most three distinct special alpha-limit sets of $f$ can contain a given nonempty open set.
\end{corollary}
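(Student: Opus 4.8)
The plan is to deduce Corollary~\ref{cor:at-most-3} from Theorem~\ref{th:at-most-3} by showing that whenever a nonempty open set $U$ is contained in $\sa(x)$, there is an underlying transitive cycle of intervals $M$ with $U \subseteq M \subseteq \sa(x)$, and that this same $M$ works for \emph{every} $\sa$-limit set containing $U$. Once the containment of $\sa$-limit sets in a common open set is reduced to the containment of a common transitive cycle, part (2) of Theorem~\ref{th:at-most-3} immediately caps the number of such $\sa$-limit sets at three.

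First I would fix a nonempty open set $U$ and suppose $\sa(x) \supseteq U$. Since $U$ has nonempty interior, $U \subseteq \Int(\overline{\sa(x)})$, so the non-degenerate part of $\overline{\sa(x)}$ is nonempty. By Theorem~\ref{th:M-nd} this forces $\sa(x)$ to contain a cycle of intervals that is the union of one or two transitive cycles; call the transitive cycle meeting $U$ by $M$, so that $M \subseteq \sa(x)$. The key point is that $M$ is determined by $U$ alone and not by the particular point $x$: any transitive cycle of intervals whose interior meets the open set $U$ must actually contain that portion of $U$, and two distinct transitive cycles have disjoint interiors, so the transitive cycle responsible for covering a given piece of $U$ is unique. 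I would make this precise by choosing a point $u$ in $U$ lying in the interior of $M$ (available since $M$ is a union of non-degenerate intervals covering part of $U$), and noting that $u \in \Int(M)$ pins down $M$ as the unique transitive cycle through $u$.

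Now suppose $\sa(x_1), \sa(x_2), \ldots$ are distinct $\sa$-limit sets each containing $U$. By the previous paragraph each contains a transitive cycle $M_i$ with $u \in \Int(M_i)$, and by uniqueness all the $M_i$ coincide with a single transitive cycle $M$ satisfying $u \in \Int(M)$ and $M \subseteq U$-region. Thus every $\sa$-limit set containing $U$ in fact contains the single fixed transitive cycle $M$. Applying part (2) of Theorem~\ref{th:at-most-3} to this $M$, at most three distinct $\sa$-limit sets of $f$ contain $M$, and hence at most three distinct $\sa$-limit sets contain $U$.

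The main obstacle I anticipate is the uniqueness step: justifying rigorously that the transitive cycle extracted from $\overline{\sa(x_i)}$ through a fixed interior point $u$ of $U$ is literally the same cycle for all $i$, rather than merely a cycle of the same period or a cycle with overlapping interior. This rests on the fact that transitive cycles of intervals have pairwise disjoint interiors (used already in the proof of Theorem~\ref{th:at-least-3}), so a point in the interior of a transitive cycle cannot lie in the interior of a different one; combined with the observation that $u$ lies in the interior of each $M_i$, this collapses them to a common $M$. The remaining steps are routine once this identification is secured.
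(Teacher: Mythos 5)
Your overall strategy---reduce to Theorem~\ref{th:at-most-3}(2) by exhibiting a single transitive cycle $M$ contained in every $\sa$-limit set that contains $U$---is the same as the paper's, but there is a genuine gap at exactly the step you flag as the main obstacle. Your first paragraph fixes a point $u\in U\cap\Int(M)$ where $M$ comes from \emph{one particular} limit set; your second paragraph then asserts that ``each $\sa(x_i)$ contains a transitive cycle $M_i$ with $u\in\Int(M_i)$,'' citing the previous paragraph. But the previous paragraph does not deliver this: for another limit set $\sa(x_i)$, Theorem~\ref{th:M-nd} gives a cycle of intervals $N_i\subseteq\sa(x_i)$ which is the union of one or two transitive cycles, say $N_i=L_i\cup L_i'$, and indeed $U\subseteq N_i$ (the nowhere dense remainder cannot cover an open set); however, a priori the fixed point $u$ could be precisely a common endpoint of a component of $L_i$ and a component of $L_i'$. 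In that configuration $u$ lies in $N_i$ but in the interior of neither transitive cycle, and your disjoint-interiors collapse has nothing to act on. The ``observation that $u$ lies in the interior of each $M_i$'' is therefore not an input you may assume; it is the assertion that needs proof, and as written the quantifiers (one $u$ chosen for the first set, required to work for all sets) do not connect.

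The gap is fixable within your approach: if $u$ were, say, the right endpoint of a component $[a,u]$ of $L_i$, then $(\max(a,u-\delta),u)\subseteq\Int(L_i)\cap\Int(M)$ for small $\delta>0$, so $L_i=M$ by disjointness of interiors of distinct transitive cycles, contradicting $u\in\Int(M)$; hence $u$ does lie in $\Int(L_i)$ or $\Int(L_i')$, and the collapse goes through. But note that the paper's proof sidesteps this analysis entirely and is shorter: it applies Theorem~\ref{th:M-nd} only once, to a single limit set containing $U$, obtaining a non-degenerate subinterval $V\subseteq U\cap M$ of a transitive cycle $M$; then \emph{any} $\sa$-limit set containing $U$ contains $V$, hence infinitely many points of $M$, so Corollary~\ref{cor:tc} already forces it to contain all of $M$. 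No decomposition of the other limit sets, and no interior-point bookkeeping, is needed before invoking Theorem~\ref{th:at-most-3}(2).
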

\begin{proof}
Let $U$ be a nonempty open set in $[0,1]$. If a $\sa$-limit set of $f$ contains $U$, then by Theorem~\ref{th:M-nd}, $U$ contains a whole subinterval from some transitive cycle $M$. Applying Corollary~\ref{cor:tc}, we see that any $\sa$-limit set which contains $U$ also contains $M$. By Theorem~\ref{th:at-most-3} there are at most three such $\sa$-limit sets.
\end{proof}

This corollary corrects Conjecture~\ref{conj:4} (originally \cite[Conjecture 4.10]{KMS}), in which it was conjectured that two $\sa$-limit sets which contain a common open set must be equal. For comparison, note that if two $\omega$-limit sets of an interval map contain a common open set, then they are in fact equal, since an $\omega$-limit set with nonempty interior is itself a transitive cycle~\cite{Sharkovsky}. We also remark that the number three in Corollary~\ref{cor:at-most-3} cannot be improved, as is shown by the following example.

\begin{example}
Let $f:[0,1]\to[0,1]$ be an interval map for which $[0,\frac13]$ is a full 2-horseshoe, $[\frac13,\frac23]$ is a full 3-horseshoe, and $[\frac23,1]$ is a full 2-horseshoe, as shown in Figure~\ref{fig:overlap}. Then $\frac12$ belongs to only one of the three transitive invariant intervals for $f$ and $\sa(\frac12)=[\frac13,\frac23]$. But both $\frac13$ and $\frac23$ are accessible endpoints of adjacent transitive intervals and so $\sa(\frac13)=[0,\frac23]$ and $\sa(\frac23)=[\frac13,1]$.
\begin{figure}[htb!!]
\includegraphics[width=.2\textwidth]{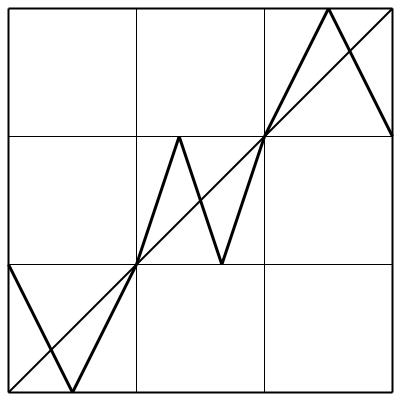}
\caption{An example where 3 distinct $\sa$-limit sets contain a common open interval.}\label{fig:overlap}
\end{figure}
\end{example}

In what follows it is necessary to allow for a weaker notion of a cycle of intervals for $f$. An interval is called \emph{non-degenerate} if it contains more than one point. If $U$ is a non-degenerate interval (not necessarily closed) such that $U, f(U), \ldots, f^{n-1}(U)$ are pairwise disjoint non-degenerate intervals and $f^n(U)\subseteq U$ (not necessarily equal), then we will call $\Orb(U)$ a \emph{weak cycle of intervals} of period $n$. The next lemma records one of the standard ways to produce a weak cycle of intervals. Similar lemmas appear in~\cite{Blokh} and several other papers, but since we were unable to find the exact statement we needed, we chose to give our own formulation here.

\begin{lemma}\label{lem:wc}
If a subinterval $U$ contains three distinct points from some orbit of $f$, then $\Orb(U)=\bigcup_{i=0}^\infty f^i(U)$ is a weak cycle of intervals for $f$.
\end{lemma}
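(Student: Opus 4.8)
The plan is to realize $\Orb(U)$ as a weak cycle whose base is not $U$ itself but a suitable connected \emph{component} of $\Orb(U)$; this is legitimate because ``being a weak cycle of intervals'' is a property of the set $\Orb(U)$, witnessed by some base interval (and indeed $f^n(U)\subseteq U$ typically fails for $U$ itself). First I would label the three orbit points in temporal order as $y_0,y_1,y_2$, writing $y_1=f^s(y_0)$ and $y_2=f^t(y_1)$ with $s,t\ge 1$, and set $W=\Orb(U)$. Since $f(W)\subseteq W$, the set $W$ is forward invariant; as $U$ is connected it lies in a single component, and I would track, for each $i\ge 0$, the component $C_i$ of $W$ containing $f^i(y_0)$, noting $U\subseteq C_0$ and $f(C_i)\subseteq C_{i+1}$ (the image is connected, lands in $W$, and contains $f^{i+1}(y_0)$).

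Next I would extract the cyclic trapping structure. Because $y_1,y_2\in U\subseteq C_0$, we have $C_s=C_0=C_{s+t}$, so the first-return time $n=\min\{m\ge 1:C_m=C_0\}$ exists and satisfies $n\le s$. A standard forward-shift argument---$C_i=C_j$ forces $C_{i+1}=C_{j+1}$, since $f(C_i)=f(C_j)$ is a nonempty connected subset of both $C_{i+1}$ and $C_{j+1}$---gives periodicity $C_{m+n}=C_m$ and, together with minimality of $n$, the distinctness of $C_0,\dots,C_{n-1}$. Consequently the intervals $f^0(C_0),\dots,f^{n-1}(C_0)$ lie in distinct components and are pairwise disjoint, while $f^n(C_0)\subseteq C_n=C_0$. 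Moreover $\Orb(C_0)=\Orb(U)$, since $U\subseteq C_0\subseteq W$ and $W$ is forward invariant. Thus $C_0$ is a candidate base of period $n$.

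The one genuine obstacle is the \emph{non-degeneracy} of the tower: the definition requires $C_0,\dots,f^{n-1}(C_0)$ to be non-degenerate, and this is exactly where the hypothesis of three points (rather than two) is needed. I would argue by contradiction: if $f^{i_0}(C_0)=\{c\}$ for some $1\le i_0\le n-1$, then since $y_0,y_1,y_2\in C_0$ we get $f^{i_0}(y_0)=f^{i_0}(y_1)=f^{i_0}(y_2)=c$. The first equality, with $y_1=f^s(y_0)$, yields $f^{i_0}(y_0)=f^{i_0+s}(y_0)$, so the orbit of $y_0$ is periodic from time $i_0$ with some period $P$ dividing $s$; and because $n\le s$ forces $s>i_0$, the point $y_1$ already lies on this cycle. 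The equality $f^{i_0}(y_1)=f^{i_0}(y_2)$ with $y_2=f^t(y_1)$ then gives $P\mid t$, whence $y_2=f^t(y_1)=y_1$, contradicting that $y_0,y_1,y_2$ are distinct. Therefore no intermediate image collapses, the tower is non-degenerate, and $C_0$ witnesses $\Orb(U)=\Orb(C_0)$ as a weak cycle of intervals. Two points alone would not rule out such a collapse, which is precisely why the hypothesis asks for three.
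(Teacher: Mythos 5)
Your proof is correct and takes essentially the same route as the paper's: both pass to the connected component of $\Orb(U)$ containing $U$ (your $C_0$ is the paper's $B$), extract a minimal return time, obtain pairwise disjointness of the images from minimality via a shift argument, and rule out degenerate images by a contradiction that exploits having three distinct orbit points rather than two. The only real difference is the mechanism of that final contradiction --- the paper shows all sets $f^{jk}(B)$, $j\geq 1$, would be one common singleton yet must contain the distinct points $f^n(x)\neq f^m(x)$, while you run the collapse through periodicity arithmetic ($P\mid s$, then $P\mid t$, forcing $y_1=y_2$) --- and both versions are equally valid.
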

\begin{proof}
Let $x, f^n(x), f^m(x)$ be three distinct points in $U$, $0<n<m$. Clearly $\Orb(U)$ is invariant. Since the intervals $U, f^n(U)$ both contain $f^n(x)$ we see that $U\cup f^n(U)$ is connected, i.e. it is an interval. Then also $f^n(U)\cup f^{2n}(U)$ is connected, and so on inductively. Therefore the set $A=\bigcup_{j=0}^\infty f^{jn}(U)$ is connected. Then $\Orb(U)=\bigcup_{i=0}^{n-1} f^i(A)$ has at most $n$ connected components. Let $B\supseteq A$ be the component of $\Orb(U)$ containing $U$, and let $k\leq n$ be minimal such that $B\cap f^k(B)\neq\emptyset$. Then $f^k(B)$ is a connected subset of $\Orb(U)$, so $f^k(B)\subseteq B$. For $0\leq i<j<k$ if $f^i(B)\cap f^j(B)\neq\emptyset$, then $f^{i+k-j}(B)\cap f^k(B)\neq\emptyset$, so $f^{i+k-j}(B)\cap B\neq\emptyset$, contradicting the choice of $k$. This shows that $B, f(B), \ldots, f^{k-1}(B)$ are pairwise disjoint. It remains to show that they are all non-degenerate. Clearly all three points $x,f^n(x),f^m(x)$ are in $B$. From the disjointness of $B,f(B),\ldots,f^{k-1}(B)$ it follows that $n,m$ are multiples of $k$. And since $x,f^n(x),f^m(x)$ are distinct we get $n=j_1k$, $m=j_2k$ with $0<j_1<j_2$. If any $f^i(B)$ is a singleton, $i\leq k$, then so also is $f^k(B)$. Then using $B\supseteq f^k(B)\supseteq f^{2k}(B) \supseteq \cdots$, we see that all the sets $f^{jk}(B)$, $j\geq1$, are the same singleton. But in that case $f^{j_1k}(B)=\{f^n(x)\}$, $f^{j_2k}(B)=\{f^m(x)\}$ are not the same singleton, which is a contradiction.
\end{proof}

\begin{lemma}\label{lem:cyclesmeet}
Let $N$ be a weak cycle of intervals for $f$ and $M$ a transitive cycle of intervals. Let $E$ be the set of exceptional points for $M$. If $N\cap M$ is nonempty, then the following hold:
\begin{enumerate}
\item Either $N\supseteq M\setminus E$ or else $N\cap M$ is a periodic orbit contained in $\End(M)$, and
\item The period of $N$ is at most twice the period of $M$.
\end{enumerate}
\end{lemma}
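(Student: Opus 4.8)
The plan is to split on whether $N$ meets the interior of $M$, proving the inclusion $N\supseteq M\setminus E$ in the first case and the periodic-orbit statement in the second, and then to bound the period of $N$ by a counting argument in each case. Throughout I will use the following setup. Write $k$ for the period of $M$ with components $K_0,\dots,K_{k-1}$; then $f$ permutes the $K_j$ cyclically, each $f|_{K_j}\colon K_j\to K_{j+1}$ is onto, and since $f|_M$ is transitive, $f$ is nowhere locally constant on $M$ (otherwise $\bigcup_{n\ge1}f^n(V)$ would be countable for a constancy interval $V$, contradicting that it must contain the uncountable set $M\setminus E$). Write $m$ for the period of $N$ with components $U_0=U,\dots,U_{m-1}$; then $f(U_i)=U_{i+1}$ onto for $i<m-1$ and $f(U_{m-1})\subseteq U_0$, so \emph{each application of $f$ advances the $N$-component index by exactly one} $\pmod m$. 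Both $N$ and $M$ are invariant, so $N\cap M$ is invariant.

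\textbf{Case A ($N\cap\Int(M)\neq\emptyset$).} This case yields $N\supseteq M\setminus E$ immediately. If $p\in N\cap\Int(M)$, the component of $N$ containing $p$ contains a nondegenerate interval $V$ with $V\subseteq N$ and $V\subseteq\Int(M)$, so $V$ is a nonempty relatively open subset of $M$ lying in $N$. By the definition $M\setminus E=\bigcap_W\bigcup_{n\ge1}f^n(W)\subseteq\bigcup_{n\ge1}f^n(V)$, and since $N$ is invariant with $V\subseteq N$ we have $\bigcup_{n\ge1}f^n(V)\subseteq N$. Hence $M\setminus E\subseteq N$.

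\textbf{Case B ($N\cap\Int(M)=\emptyset$).} Here $N\cap M\subseteq\End(M)$ is a finite invariant set; write $P=N\cap M$. The crux is to show $P$ is a \emph{single} periodic orbit. I would first prove that $f|_P$ is injective: if $f(e_1)=f(e_2)=e^\ast$ with $e_1\neq e_2$ in $P$, the index-shift forces $e_1,e_2$ into the same component $U_j$, so $[e_1,e_2]\subseteq U_j$ is disjoint from $\Int(M)$; two endpoints of $M$ with no interior of $M$ between them must bound a gap, hence lie in \emph{distinct} components $C\neq C'$ of $M$, and then $e^\ast\in f(C)\cap f(C')=\emptyset$ since $f$ permutes the components bijectively — a contradiction. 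Injectivity makes $f|_P$ a bijection, so $P$ is a union of cycles and the counts $c:=|U_i\cap P|$ are constant in $i$ (the maps $U_i\cap P\to U_{i+1}\cap P$ are injective around the loop), with $c\in\{1,2\}$ because an interval can meet $\End(M)$ in at most two points without crossing $\Int(M)$. If $c=1$ then $|P|=m$ and the component-index shift shows the orbit of any point of $P$ visits each component once and has period $m$, so $P$ is one orbit. If $c=2$ then every $U_i$ contains two endpoints of $M$, hence equals a closed gap $\overline{G_{l(i)}}$ of $M$; since the $U_i$ are pairwise disjoint and $M$ has only $k-1$ gaps, this forces $m\le k-1$, whence $k\nmid m$. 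Now $f^m$ permutes the two-point set $U_0\cap P=\{b,a'\}$ (with $b$ a right endpoint of one $M$-component and $a'$ a left endpoint of another), and if $f^m$ fixed $b$ then $f^m$ would fix the $M$-component containing $b$, i.e. $\sigma^m$ (the induced $k$-cycle) would have a fixed point, forcing $k\mid m$ — impossible. Therefore $f^m$ swaps $b$ and $a'$, so the orbit of $b$ has period $2m=|P|$ and again $P$ is one orbit. I expect this $c=2$ analysis — recognizing that two orbits could only arise in the ``gap-preserving, orientation-preserving'' subcase and then excluding it via the gap count $m\le k-1$ — to be the main obstacle.

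Finally, for part (2) I would bound $m\le 2k$ in both cases. In Case B this is immediate: each $U_i$ meets the finite set $P\subseteq\End(M)$ and the $U_i$ are disjoint, so $m\le|P|\le|\End(M)|=2k$. In Case A I would argue that \emph{every} component of $N$ meets $\Int(M)$: if $U_i$ contains a nondegenerate $V\subseteq\Int(K_j)$, then since $f$ is nowhere constant on $M$, $f(V)$ is a nondegenerate subinterval of $K_{j+1}$ meeting $\Int(M)$ and lying in $U_{i+1}$, and iterating around the loop propagates this to all components. On the other hand, since $N\supseteq M\setminus E$ and $E$ contributes at most one interior point $c_j$ per component, $N\cap\Int(K_j)$ is $\Int(K_j)$ or $\Int(K_j)\setminus\{c_j\}$, so at most two components of $N$ meet each $\Int(K_j)$. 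Counting incidences between components of $N$ and the $k$ sets $\Int(K_j)$ then gives $m\le 2k$.
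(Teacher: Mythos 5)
Your proof is correct and, in the endpoint case, genuinely different from the paper's, so here is the comparison. The dichotomies agree: the paper splits on $N\cap M$ infinite versus finite, which is exactly your Case A versus Case B, and in Case A your derivation of $N\supseteq M\setminus E$ (invariance of $N$ plus the definition of $E$) is the paper's argument verbatim. In the finite case the paper also starts from your two key facts (the index shift, and that distinct components of $M$ have disjoint images) to show that each component of $N$, resp.\ of $M$, carries the same number ($1$ or $2$) of intersection points; but it then finishes with a spatial/intermediate-value argument (the same device as in the proof of Theorem~\ref{th:at-least-3}): ordering the components of $M$ left to right shows that the ``two points per component'' cases force $M$ or $N$ to have at most two components, whence the intersection is a period-$2$ orbit. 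You instead package the key facts as injectivity of $f$ on $P=N\cap M$ and, when $c=2$, identify each component of $N$ with a closed gap of $M$, so that the gap count gives $m\le k-1$, hence $k\nmid m$, and the observation that $\sigma^m$ has a fixed point iff $k\mid m$ forces $f^m$ to swap the two endpoints. Both mechanisms are valid; the paper's route additionally shows that your $c=2$ configuration can only occur with very few components, while yours avoids the intermediate value theorem entirely and is more combinatorial. For part (2) you are in one respect more careful than the paper: the paper's count (``each component of $M$ meets at most two components of $N$, hence the period bound'') tacitly assumes that every component of $N$ meets $M$, and your propagation argument via nowhere-local-constancy of $f$ on $M$ supplies exactly that missing step. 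One caveat you share with the paper: the inference ``$N\supseteq\Int(K_j)\setminus\{c_j\}$ and the $U_i$ are pairwise disjoint, hence at most two $U_i$ meet $\Int(K_j)$'' is airtight when ``components'' means connected components of $N$, but both you and the paper apply it to the defining intervals $f^i(U)$ of the weak cycle, which are pairwise disjoint yet could in principle be adjacent, so that three of them meet one $\Int(K_j)$ (for instance $(0,\tfrac14]$, $(\tfrac14,\tfrac12)$, $(\tfrac12,1)$ cover $(0,1)\setminus\{\tfrac12\}$); closing this completely requires either identifying the period with the number of connected components or a short dynamical argument using transitivity of $f^k$ on $K_j$. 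Since the paper makes the identical leap, this is shared looseness rather than a gap in your proposal.
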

\begin{proof}
Let $n, m$ be the periods of $N, M$, respectively. Let $N_i$, $M_j$, be the components of $N$, $M$ with the temporal ordering, so $f(N_i) \subset N_{i+1\!\mod n}$ for all $i<n$ and $f(M_j) = M_{j+1\!\mod m}$ for all $j<m$.

Suppose first that $N\cap M$ is infinite. Then $N$ contains a non-degenerate interval $U\subset M$. We have $N\supseteq\Orb(U)\supseteq M\setminus E$, where the first containment comes from the invariance of $N$ and the second from the definition of the exceptional set $E$. Since $E$ contains at most one non-endpoint of each component $M_j$, it follow that each component $M_j$ meets at most two components $N_i$. So in this case $n\leq 2m$.

For the rest of the proof we suppose that $N\cap M$ is finite. We no longer need transitivity and the sets $N$, $M$ will play symmetric roles. Clearly each nonempty intersection $N_i \cap M_j$ is at common endpoints. This shows that $N\cap M\subseteq\End(M)$. It also shows that each component $N_i$ contains at most 2 points from $M$, and conversely each component $M_j$ contains at most 2 points from $N$.

\emph{Claim 1: Each component $N_i$ contains the same number of points of $M$. Conversely, each component $M_j$ contains the same number of points of $N$.} Suppose first that some $N_i$ contains 2 distinct points $a,b$ from $M$. Then $a,b$ belong to distinct components of $M$, and therefore $f(a), f(b)$ also belong to distinct components of $M$. But they both belong to $N_{i+1 \mod n}$. Continuing in this way we see that each each component of $N$ contains 2 points from $M$. Now suppose instead that each component of $N$ contains at most 1 point from $M$. Surely some $N_i$ contains at least one point $a\in M$. Then $N_{i+1 \mod n}$ contains the point $f(a)\in M$. Continuing in this way we see that each component $N_i$ contains exactly 1 point from $M$. Moreover, the whole argument still works if we reverse the roles of $N$ and $M$. This concludes the proof of Claim 1.

\emph{Claim 2: The intersection $N\cap M$ is a periodic orbit.} Suppose first that each component of $N$ contains 2 points from $M$. We reuse an argument from the proof of Theorem~\ref{th:at-least-3}. Let $A_1, A_2, \ldots, A_m$ be the components of $M$ in the spatial order, i.e. numbered from left to right in $[0,1]$. Let $\sigma:\{1,\ldots,m\} \to \{1,\ldots,m\}$ be the cyclic permutation defined by $f(A_i)\subseteq A_{\sigma(i)}$. If $m\geq3$ then there must exist $i$ such that $|\sigma(i)-\sigma(i+1)|\geq2$, so there is some $A_j$ strictly between $A_{\sigma(i)}, A_{\sigma(i+1)}$. Let $N_k$ be the component of $N$ which intersects both $A_i$ and $A_{i+1}$. By the intermediate value theorem, $f(N_k)\supset A_j$. This contradicts the fact that $M\cap N$ is finite. Therefore $M$ has only 2 components, and $N$ has only one, i.e. $N=N_1$. The two endpoints of $N_1$ belong to $A_1, A_2$, respectively, and are therefore interchanged by $f$. So in this case $M\cap N$ is a periodic orbit of period 2.

The symmetric situation arises if each component of $M$ contains 2 points from $N$. Then $N$ has 2 components and $M$ has only $1$, and again $M\cap N$ is a periodic orbit of period $2$.

Now suppose that each $N_i$ contains exactly one point from $M$, and each $M_j$ contains exactly one point from $N$. Then $m=n$ and we may assume the components are indexed such that $N_i\cap M_j$ is nonempty if and only if $i=j$. Let $x_i$ be the unique point of intersection of $N_i$ and $M_i$. Since these intersection points are unique we get $f(x_i)=x_{i+1 \mod n}$ for all $i$. Thus $N\cap M$ is a periodic orbit of period $m=n$.
\end{proof}

%

\begin{proof}[Proof of Theorem~\ref{th:at-most-3}]\strut\\\indent%
We have already seen that the set of exceptional points in $M$ may be characterized as $E=\{x\in M ~:~ \sa(x)\not\supseteq M\}$. Combined with Lemma~\ref{lem:inv-sa} this shows that
\begin{equation*}
\sa(x)\supseteq M \Leftrightarrow x\in M\setminus E.
\end{equation*}
Our task is to compare the $\sa$-limit sets of the various points $x\in M\setminus E$. Since they all contain $M$ it is enough to check whether or not they coincide outside of $M$. For any point $y\in[0,1]$ let us write $\saBasin(y)=\{z~:~y\in\sa(z)\}$.

\emph{Step 1: If $x\in M\setminus E$ and $y\in\sa(x)\setminus M$, then there is a weak cycle of intervals $N$ for $f$ such that $x\in N\subseteq\saBasin(y)$.}
To prove this claim, let $(x_i)$ be a backward orbit branch of $x$ accumulating on $y$. The sequence $(x_i)$ cannot contain $y$ twice, for otherwise $y$ would be a periodic point containing $x$ in its orbit, contradicting the invariance of $M$. Therefore there is a subsequence $(x_{i_j})$ which converges to $y$ monotonically from one side. We will suppose $x_{i_j} \searrow y$ from the right. The proof when $x_{i_j} \nearrow y$ from the left is similar.

For $k=1,2,\ldots$ let $U_k=(y,y+\frac1k)$. Then $U_k$ contains the points $x_{i_j}$ for large enough $j$. By Lemma~\ref{lem:wc}, the set $V_k=\Orb(U_k)$ is a weak cycle of intervals. Since $V_k$ is invariant and contains points $x_i$ for arbitrarily large natural numbers $i$, we must have the whole backward orbit branch $(x_i)$ contained in each $V_k$. Moreover, we have nesting $V_1\supseteq V_2\supseteq \cdots$. Letting $v_k$ denote the period (i.e. the number of connected components) of $V_k$ this implies $v_1\leq v_2\leq \cdots$. But each $V_k$ contains the point $x\in M$, so by Lemma~\ref{lem:cyclesmeet}~(2) each $v_k\leq2m$, where $m$ is the period of $M$. A bounded increasing sequence of natural numbers is eventually constant. So fix $k'$ such that $v_{k'}=v_{k'+1}=\cdots$.

For each $k$ let $V^0_k$ be the connected component of $V_k$ which contains $U_k$. Choose some $i$ such that $x_i\in V^0_{k'}$, $x_i>y$. For $k\geq k'$, $V_k$ is a weak cycle of intervals contained in $V_{k'}$ and with the same number of components. It follows that $V^0_k$ is the only component of $V_k$ which meets $V^0_{k'}$. Therefore $V^0_k$ must contain $x_i$ as well. In particular, setting $\delta=x_i-y$ we find that
\begin{equation*}
\forall_{\epsilon>0}, \Orb((y,y+\epsilon)) \supseteq (y,y+\delta).
\end{equation*}

Now let $N=\Orb((y,y+\delta))=\bigcap_{\epsilon>0}\Orb((y,y+\epsilon))$. It follows easily that $N\setminus\{y\}\subseteq\saBasin(y)$. For if $z\in N$, $z\neq y$, then taking $\epsilon_1<\min(\delta,|z-y|)$ we find $z_1\in(y,y+\epsilon_1)$ and $n_1\geq1$ such that $f^{n_1}(z_1)=z$. Then taking $\epsilon_2<z_1-y$ we find $z_2\in(y,y+\epsilon_2)$ and $n_2\geq1$ such that $f^{n_2}(z_2)=z_1$. Continuing inductively, we get a subsequence of a backward orbit branch of $z$ which accumulates on $y$.

Since $(y,y+\delta)$ contains $x_{i_j}$ for sufficiently large $j$, Lemma~\ref{lem:wc} also implies that $N$ is a weak cycle of intervals, and since it is forward invariant we have $x\in N$. This concludes Step 1.

\emph{Step 2: If $x, x'\in M\setminus E$ and $y\in\sa(x)\setminus M$, then $\sa(x)\subseteq\sa(x')$.} To prove this claim, fix an arbitrary point $y\in\sa(x)$. We need to show that $y\in\sa(x')$ as well. If $y\in M$ then there is nothing to prove, since $M\subset\sa(x')$. So suppose $y\notin M$. By Step 1 there is a weak cycle of intervals $N$ such that $x\in N\subseteq\saBasin(y)$. Now we apply Lemma~\ref{lem:cyclesmeet}~(1), noting that $M\cap N$ contains the point $x\not\in\End(M)$. Therefore $M\setminus E\subseteq N$. We now have $x'\in M\setminus E \subseteq N \subseteq \saBasin(y)$, from which it follows that $y\in\sa(x')$. This concludes Step 2.

\emph{Step 3: Each point in $M\setminus(E\cup\End(M))$ has the same $\sa$-limit set}. Suppose $x, x'\in M\setminus(E\cup\End(M))$. Then we may apply Step 2 to get both containments $\sa(x)\subseteq\sa(x')$ and $\sa(x')\subseteq\sa(x)$. This concludes Step 3 and the proof of Theorem~\ref{th:at-most-3}~(1). From now on we will refer to this common $\sa$-limit set as $S$.

\emph{Step 4: If the $\sa$-limit set of $x'$ contains $M$ and is distinct from $S$, then $x'$ belongs to a periodic orbit contained in $\End(M)$.} The hypothesis $M\subseteq\sa(x')$ implies that $x'\in M\setminus E$. Fix $x\in M\setminus(E\cup\End(M))$ and apply Step 2 to conclude that $S=\sa(x)\subseteq \sa(x')$. Our hypothesis is that this containment is strict. So choose $y\in\sa(x')\setminus\sa(x)$. Clearly $y\notin M$. Applying Step 1 to $x'$ we get a weak cycle of intervals $N$ such that $x'\in N \subseteq\saBasin(y)$. Moreover $x\not\in N$ since $y\not\in\sa(x)$. Now $M\cap N$ contains $x'$ but not $x$, so we may apply Lemma~\ref{lem:cyclesmeet}~(1) and conclude that $M\cap N$ is a periodic orbit contained in $\End(M)$. This concludes Step 4.

\emph{Step 5: At most three distinct $\sa$-limit sets of $f$ contain $M$.} One of these sets is $S$ from Step 3. By step 4, the only other sets to consider are the $\sa$-limit sets of those periodic points whose whole orbits are contained in $\End(M)$. Since $M$ is a cycle of intervals it is clear that $\End(M)$ contains at most two periodic orbits. And from the definitions it is clear that all points in a periodic orbit have the same $\sa$-limit set. This concludes Step 5 and the proof of Theorem~\ref{th:at-most-3}~(2).
\end{proof}

\section{On special $\alpha$-limit sets which are not closed}\label{sec:not-closed}

\subsection{Points in the closure of a special $\alpha$-limit set}\label{sec:points}\strut\\
We start Section~\ref{sec:not-closed} with two theorems which are important for understanding any non-closed $\sa$-limit sets of an interval map. Theorem~\ref{th:almost-closed} relates the closure of a $\sa$-limit set to the three kinds of maximal $\omega$-limit sets. Theorem~\ref{th:main-not-closed} determines precisely which points do or do not have a closed $\sa$-limit set, and which points from the closure are not present in the limit set. In Section~\ref{sec:not-closed-props} we apply these results to establish some topological properties of non-closed $\sa$-limit sets for interval maps, showing that they are always uncountable, nowhere dense, and of type $F_\sigma$ and $G_\delta$. In Section~\ref{sec:props} we address the question of which interval maps have all of their $\sa$-limit sets closed. Most importantly, this holds in the piecewise monotone case. In Section~\ref{sec:example} we give a concrete example of an interval map with a non-closed $\sa$-limit set.

Recall that a \emph{generating sequence} is any nested sequence of cycles of intervals $\Orb(I_0)\supset\Orb(I_1) \supset \cdots$ for an interval map $f$ with periods tending to infinity. In light of the discussion at the beginning of Section~\ref{sec:solenoidal} we may define a \emph{maximal solenoidal set} as the intersection $Q=\bigcap\Orb(I_n)$ of a generating sequence. Recall that a \emph{solenoidal $\omega$-limit set} is an infinite $\omega$-limit set containing no periodic points, and a solenoidal $\omega$-limit set is always contained in a maximal solenoidal set. Recall also that the \emph{Birkhoff center} of $f$ is the closure of the set of recurrent points.

\begin{theorem}\label{th:almost-closed}
Let $f$ be an interval map, let $x\in\overline{\sa(y)}$, and suppose that any of the following conditions holds:
\begin{enumerate}
\item $x$ is periodic,
\item $x$ belongs to a basic set $B$, or
\item $x$ is a recurrent point in a solenoidal $\omega$-limit set.
\end{enumerate}
Then $x\in\sa(y)$.
\end{theorem}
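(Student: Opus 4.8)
The plan is to dispatch the three hypotheses using the three structure theorems already available --- Theorem~\ref{th:periodic} for (1), Theorem~\ref{th:solenoidal} for (3), and Theorem~\ref{th:basic} for (2). First I would record the one observation that drives everything: since every backward orbit branch is a preimage sequence, $\sa(y)\subseteq\alpha(y)$, and $\alpha(y)$ is closed, so the hypothesis $x\in\overline{\sa(y)}$ already yields $x\in\alpha(y)$.

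Cases (1) and (3) are then immediate. If $x$ is periodic, then $x\in\alpha(y)\cap\Orb(x)$, and Theorem~\ref{th:periodic} gives $\Orb(x)\subseteq\sa(y)$, hence $x\in\sa(y)$. If $x$ is a recurrent point of a solenoidal $\omega$-limit set, let $Q$ be the maximal solenoidal set containing that $\omega$-limit set and $S=Q\cap\Rec(f)$; then $x\in S\subseteq Q$, so $x\in\alpha(y)\cap Q$, and Theorem~\ref{th:solenoidal}(1) forces $y\in Q$, whereupon Theorem~\ref{th:solenoidal}(2) gives $\sa(y)\supseteq S\ni x$.

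Case (2) I would split according to the orbit of $x\in B$. Since a basic set is strongly invariant and $\alpha(y)$ is closed and invariant, we have $\Orb(x)\subseteq\alpha(y)\cap B$. If $\Orb(x)$ is infinite, this is an infinite subset of $B$ inside $\alpha(y)$, and Theorem~\ref{th:basic}(1) gives $\sa(y)\supseteq B\ni x$; if $x$ is periodic we are back in case (1). The remaining, and genuinely hard, possibility is that $x$ is preperiodic. Here the given $B$ is of little use (as the first map in Figure~\ref{fig:prep} illustrates, $\alpha(y)\cap B$ may be finite even though $x\in B\cap\overline{\sa(y)}$), so instead I would build a single backward orbit branch of $y$ converging to $x$ directly. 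Passing to a subsequence, choose $w_k\in\sa(y)$ converging to $x$ monotonically from one side, say the right, and set $U_k=(x,x+\tfrac1k)$. Each $w_k$ is a limit of a backward orbit branch of $y$, so $U_k$ contains three points of a single forward orbit; by Lemma~\ref{lem:wc} the set $V_k=\Orb(U_k)$ is a weak cycle of intervals, and the $V_k$ are nested with non-decreasing periods $v_k$.

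The crux is to show that the periods $v_k$ stay bounded, and this is exactly where preperiodicity enters: if $v_k\to\infty$ then the base components $J_k$ of the $V_k$ form a nested sequence of (weak) cycles of intervals with periods tending to infinity, all accumulating on $x$, which would place $x$ in a solenoidal set --- impossible, as such sets contain no periodic, hence no preperiodic, points. Once $(v_k)$ is bounded it is eventually constant, and the argument of Step~1 in the proof of Theorem~\ref{th:at-most-3} applies to produce a fixed $\delta>0$ with $\Orb((x,x+\epsilon))\supseteq(x,x+\delta)$ for every $\epsilon>0$. Starting from a deep preimage of $y$ inside $(x,x+\delta)$ and repeatedly pulling it back into ever smaller right-neighbourhoods of $x$, I would splice the resulting finite backward segments into one backward orbit branch of $y$ accumulating at $x$, giving $x\in\sa(y)$. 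I expect the bounded-period step to be the main obstacle: making the ``would place $x$ in a solenoidal set'' implication precise (passing from the weak cycles $f^{v_k}(J_k)\subseteq J_k$ to genuine generating intervals, and ruling out the delicate configuration in which $x$ sits on the boundary of solenoidal behaviour hidden inside a collapsing interval of $B$) is where Blokh's structure theory must be invoked with care.
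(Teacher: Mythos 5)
Your handling of cases (1) and (3), and of case (2) when $\Orb(x)$ is infinite, is correct and coincides with the paper's own proof (which likewise reduces everything to $x\in\alpha(y)$ and invokes Theorems~\ref{th:periodic}, \ref{th:solenoidal} and \ref{th:basic}). The divergence is the preperiodic case, and there your argument has two gaps, one of which you flag and one of which you do not.

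The flagged gap, boundedness of the periods $v_k$, is real but is closed by an elementary argument rather than by solenoidal theory: writing $p=f^n(x)$ for the periodic point in the orbit of $x$ and $q$ for its period, the inclusion $x\in\overline{J_k}$ gives $p=f^{n+jq}(x)\in\overline{f^{(n+jq)\bmod v_k}(J_k)}$ for every $j\geq 0$, so $p$ lies in the closures of $v_k/\gcd(v_k,q)$ pairwise disjoint non-degenerate intervals; a point of $[0,1]$ lies in the closure of at most two such intervals, whence $v_k\leq 2q$. Your proposed route is shakier: the $V_k$ are only weak cycles ($f^{v_k}(J_k)\subseteq J_k$), the paper's generating-sequence machinery requires genuine cycles of intervals, and extracting a genuine cycle from $\overline{V_k}$ (say as $\bigcap_m f^{mv_k}(\overline{J_k})$) can lose the point $x$, which is only known to lie in $\overline{V_k}$.

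The unflagged gap is the claim that ``the argument of Step~1 in the proof of Theorem~\ref{th:at-most-3} applies'' to produce $\delta>0$ with $\Orb((x,x+\epsilon))\supseteq(x,x+\delta)$ for all $\epsilon>0$. Step~1 starts from a \emph{single} backward orbit branch accumulating on the target point from the right; by forward invariance that whole branch lies in every $V_k$, and a branch point $x_i$ above the target serves as the anchor lying in every component $J_k$, which is exactly what prevents $\sup J_k$ from collapsing to the target. In your setting such a branch is precisely what you are trying to construct: you only have distinct branches $\beta_k$ accumulating at each $w_k$, and $\beta_k$ is guaranteed to lie in $V_j$ only for $j\leq k$, so a priori the $J_k$ contain no common point to the right of $x$ and no $\delta$ exists, in which case the pull-back/splicing construction cannot be continued. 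The available substitute anchor is $f^v(x)$, which lies in $\overline{J_k}$ for all large $k$ and differs from $x$ since $x$ is not periodic. When $f^v(x)>x$ this repairs your argument completely, since then $\overline{J_k}\supseteq[x,f^v(x)]$ and $\delta=f^v(x)-x$ works. But when $f^v(x)<x$ --- the image of $x$ on the opposite side from the accumulation --- the components $J_k$ may still fail to contain any fixed right-hand neighborhood of $x$, and your construction breaks down. This opposite-sides configuration is exactly the hard half of the paper's proof: the paper reduces via \cite[Proposition 2.9]{KMS} to the case $x\neq f(x)=f^2(x)$ and then, when $\sa(y)$ accumulates on $x$ only from the side away from $f(x)$, it must build the auxiliary invariant connected set $W$ and prove that $W$ contains a one-sided neighborhood of $x$. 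Your sketch reproduces, in effect, only the easy same-side case, so the preperiodic case remains unproved as written.
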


\begin{theorem}\label{th:main-not-closed}
Let $f:[0,1]\to [0,1]$ be an interval map and $y\in[0,1]$. The set $\sa(y)$ is not closed if and only if $y$ belongs to a maximal solenoidal set $Q$ which contains a nonrecurrent point from the Birkhoff center of $f$. In this case
\begin{equation*}
\overline{\sa(y)}\setminus\sa(y) = Q \cap \left(\overline{\Rec(f)}\setminus\Rec(f)\right).
\end{equation*}
\end{theorem}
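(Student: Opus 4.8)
The plan is to establish Theorem~\ref{th:main-not-closed} by first showing that a non-closed $\sa$-limit set forces $y$ into a maximal solenoidal set $Q$ of the required type, then computing the difference set $\overline{\sa(y)}\setminus\sa(y)$ exactly, and finally verifying the converse. The central tool is Theorem~\ref{th:almost-closed}, which tells us that the only points of $\overline{\sa(y)}$ that can fail to lie in $\sa(y)$ are points which are neither periodic, nor in a basic set, nor recurrent-in-a-solenoidal-set. By Blokh's decomposition of $\Lambda^1(f)$ into periodic orbits, basic sets, and solenoidal $\omega$-limit sets (invoked as in the proof that isolated points are periodic), together with $\overline{\sa(y)}\subseteq\overline{\Lambda^1(f)}$, any missing point $x\in\overline{\sa(y)}\setminus\sa(y)$ must be a \emph{nonrecurrent} point of a solenoidal set. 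In particular such an $x$ lies in the Birkhoff center $\overline{\Rec(f)}$ but not in $\Rec(f)$, which already produces the shape of the claimed difference set.

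First I would prove the forward implication. Assume $\sa(y)$ is not closed and fix $x\in\overline{\sa(y)}\setminus\sa(y)$. By the decomposition argument above, $x$ belongs to some maximal solenoidal set $Q=\bigcap\Orb(I_n)$ and is nonrecurrent, i.e. $x\in S_\omega\setminus S_{\Rec}$ in the notation of Section~\ref{sec:solenoidal}. The key point is to locate $y$ inside this same $Q$. Since $x\in\overline{\sa(y)}$, there are points of $\sa(y)$ arbitrarily close to $x$; because $x$ lies in the interior of every $\Orb(I_n)$ (a nonrecurrent solenoidal point is interior to each generating cycle, exactly as argued in the Claim within the proof of Theorem~\ref{th:solenoidal}), these nearby points of $\sa(y)$ eventually land in $\Int(\Orb(I_n))$ for every $n$. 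Then Lemma~\ref{lem:inv-sa} applied to the invariant sets $\Orb(I_n)$ forces $y\in\Orb(I_n)$ for all $n$, whence $y\in Q$. Thus $Q$ contains the nonrecurrent Birkhoff-center point $x$, as required.

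Next I would pin down the difference set precisely. For $y\in Q$, Theorem~\ref{th:solenoidal}(2) gives $\sa(y)\cap Q=S=S_{\Rec}=Q\cap\Rec(f)$. I claim $\overline{\sa(y)}\supseteq S_\omega=Q\cap\Lambda^1(f)$; indeed the closure of $\sa(y)$ contains the closure of $S_{\Rec}$, and by Theorem~\ref{th:solenoid-model} the recurrent points are dense among the solenoidal $\omega$-points, so $\overline{S_{\Rec}}=S_\omega$. Combining these, the part of $\overline{\sa(y)}$ inside $Q$ is exactly $S_\omega$, while the part of $\sa(y)$ inside $Q$ is exactly $S_{\Rec}$, so within $Q$ the missing points are $S_\omega\setminus S_{\Rec}=Q\cap(\overline{\Rec(f)}\setminus\Rec(f))$. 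It then remains to argue that \emph{no} missing point lies outside $Q$: any $x'\in\overline{\sa(y)}\setminus\sa(y)$ is, by the decomposition plus Theorem~\ref{th:almost-closed}, a nonrecurrent point of \emph{some} maximal solenoidal set $Q'$, and by Corollary~\ref{cor:at-most-1} (a $\sa$-limit set meets at most one maximal solenoidal set) together with $y\in Q$ we must have $Q'=Q$. This yields the displayed equality
\begin{equation*}
\overline{\sa(y)}\setminus\sa(y)=Q\cap\left(\overline{\Rec(f)}\setminus\Rec(f)\right).
\end{equation*}

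Finally, for the converse, suppose $y$ lies in a maximal solenoidal set $Q$ containing a nonrecurrent Birkhoff-center point, i.e. $S_\omega\setminus S_{\Rec}\neq\emptyset$. The computation just performed shows $\overline{\sa(y)}\cap Q=S_\omega\supsetneq S_{\Rec}=\sa(y)\cap Q$, so $\sa(y)$ omits a point of its closure and is therefore not closed. The step I expect to be the main obstacle is the forward direction's localization of $y$ into $Q$: one must be careful that approximating points of $\sa(y)$ genuinely penetrate the interiors of the generating cycles, which relies on the interiority of nonrecurrent solenoidal points established in Theorem~\ref{th:solenoidal} and the disjointness/nesting geometry of generating sequences. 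Verifying that $\overline{S_{\Rec}}=S_\omega$ (rather than something larger) via Theorem~\ref{th:solenoid-model} is the other place demanding care, since it is what guarantees the closure adds \emph{exactly} the nonrecurrent solenoidal points and nothing more.
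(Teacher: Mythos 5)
Your forward direction is essentially sound: showing that any point of $\overline{\sa(y)}\setminus\sa(y)$ is a nonrecurrent Birkhoff-center point of a maximal solenoidal set $Q$, and then localizing $y$ into $Q$, works either your way (interiority of nonrecurrent solenoidal points plus Lemma~\ref{lem:inv-sa}) or, more directly as the paper does it, via $\overline{\sa(y)}\subseteq\alpha(y)$ and Theorem~\ref{th:solenoidal}(1). (Your appeal to Corollary~\ref{cor:at-most-1} for the uniqueness of $Q$ is slightly off, since $x'$ lies only in the closure of $\sa(y)$ rather than in $\sa(y)$ itself, but rerunning the localization argument for $x'$ repairs this.) The fatal problem is the claim $\overline{S_{\Rec}}=S_\omega$. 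This is false precisely in the situation the theorem addresses: $S_{\Rec}$ is closed---by Blokh's theorem it equals $\omega(x)$ for every $x\in Q$---so taking its closure adds nothing; and by Theorem~\ref{th:solenoid-model} a nonrecurrent point of $S_\omega$ is an endpoint of a non-degenerate component $[a,b]$ of $Q$ which is approached by no other points of $Q$ (this failure of approach from within $Q$ is exactly why the point fails to be recurrent, as the paper's own proof makes explicit). Hence nonrecurrent points of $S_\omega$ are never limits of points of $S_{\Rec}$, your identity $\overline{\sa(y)}\cap Q=S_\omega$ fails, and with it both your derivation of the displayed equality and your converse implication collapse.

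What is missing is the actual hard content of the theorem: the inclusion $Q\cap\bigl(\overline{\Rec(f)}\setminus\Rec(f)\bigr)\subseteq\overline{\sa(y)}$. Since nothing in $Q$ accumulates on such a point $x$, the approximating points of $\sa(y)$ must come from \emph{outside} $Q$. The paper constructs them: because $x\in\overline{\Per(f)}$ while $Q$ does not accumulate on $x$ from the left, periodic points of $f$ accumulate on $x$ from the left; for each $\epsilon$ one passes to a well-chosen fixed point $p'\in(x-\epsilon,x)$ of an iterate $h=f^{mk}$, shows that the graph of $h$ stays above the diagonal on $(p',x]$ so that every right-hand neighborhood of $p'$ eventually covers a component of some $\Orb(I_{n'})$ and hence covers $Q\ni y$, giving $p'\in\alpha(y)$; Theorem~\ref{th:periodic} then upgrades this to $p'\in\sa(y)$, and letting $\epsilon\to0$ gives $x\in\overline{\sa(y)}$. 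None of this appears in your proposal. A secondary error is your identification of $S_\omega\setminus S_{\Rec}$ with $Q\cap\bigl(\overline{\Rec(f)}\setminus\Rec(f)\bigr)$: a nonrecurrent point of a solenoidal $\omega$-limit set need not lie in the Birkhoff center at all (see the example of \cite{BlockCoven} cited in the paper), and such points really are absent from $\overline{\sa(y)}$; this is exactly why the theorem intersects $Q$ with $\overline{\Rec(f)}\setminus\Rec(f)$ rather than with $\Lambda^1(f)\setminus\Rec(f)$.
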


The rest of Section~\ref{sec:points} is devoted to the proofs of these two theorems. For Theorem~\ref{th:almost-closed} the main idea is that $\overline{\sa(y)}\subset\alpha(y)$, so we can apply Theorems~\ref{th:solenoidal}, \ref{th:basic}, and \ref{th:periodic}. However, some extra care is needed for preperiodic points. Recall that $x$ is \emph{preperiodic} for $f$ if $x\not\in\Per(f)$ but there exists $n\geq1$ such that $f^n(x)\in\Per(f)$.

\begin{lemma}
Let $f$ be an interval map. If $x\neq f(x)=f^2(x)$ and $x\in\overline{\sa(y)}$, then $x\in\sa(y)$.
\end{lemma}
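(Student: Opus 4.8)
Write $p:=f(x)$; the hypothesis $x\neq f(x)=f^2(x)$ says precisely that $p$ is a fixed point and $x\neq p$. The first step is to reduce to a statement about $p$. Since every backward orbit branch of $y$ is in particular a preimage sequence of $y$, we have $\sa(y)\subseteq\alpha(y)$, and $\alpha(y)$ is closed, so $x\in\overline{\sa(y)}\subseteq\alpha(y)$. As $\alpha$-limit sets are forward invariant, $p=f(x)\in\alpha(y)$. Applying Theorem~\ref{th:periodic} to the periodic orbit $P=\{p\}$ then gives $p\in\sa(y)$; in particular there already exists a backward orbit branch of $y$ accumulating at $p$.

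Next I would extract a local pullback tool near $x$. From $x\in\overline{\sa(y)}$ choose $z_k\in\sa(y)$ with $z_k\to x$, and pass to a subsequence so that the $z_k$ approach $x$ strictly monotonically from one side; then $w_k:=f(z_k)\to f(x)=p$, and after a further subsequence $w_k$ approaches $p$ monotonically from one side. Applying the intermediate value theorem to $f$ on the shrinking intervals with endpoints $x$ and $z_k$ (using $f(x)=p$ and $f(z_k)=w_k$) yields the key fact: for every $\epsilon>0$ there is a one-sided neighborhood of $p$ all of whose values have an $f$-preimage within $\epsilon$ of $x$. I would also record the structural constraint that drives the whole construction: because $x$ is \emph{not} fixed, no backward orbit branch can \emph{converge} to $x$, so a branch accumulates at $x$ only by returning to a neighborhood of $x$ infinitely often, and the point immediately preceding each such return must lie near $f(x)=p$.

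With these tools the plan is to build a single backward orbit branch of $y$ accumulating at $x$ by a greedy/diagonal construction: use the branches $\beta_k$ of $y$ accumulating at $z_k$ (which, since $f(z_k)=w_k\to p$, automatically pass near $p$ just before each near-$z_k$ visit, and thereby furnish excursions that return to a neighborhood of $p$), and at the near-$p$ points apply the pullback tool to descend to within a prescribed $\epsilon$ of $x$; iterating with $\epsilon\to 0$ along one branch gives $x\in\sa(y)$. The main obstacle is exactly the coherent gluing of these pieces into one \emph{genuine} backward orbit branch: both the pullback tool and the returning excursions are only approximate, so the crux is an IVT/pullback argument that upgrades the approximate returns to exact preimage coherence while driving the precision near $x$ to zero. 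A secondary but essential difficulty folded into this is matching the side of approach to $p$ with the side on which preimages near $x$ exist, i.e.\ handling whether $p=f(x)$ is a local extremum value of $f$ at $x$ or a crossing value, which is what determines whether the near-$p$ points supplied by the branches can actually be pulled back to lie near $x$.
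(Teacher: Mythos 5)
Your proposal is not a complete proof: the step you yourself flag as ``the crux'' --- upgrading approximate returns into one coherent backward orbit branch --- is exactly where all the work lies, and your outline contains no mechanism for doing it. Concretely, after you redirect a branch of $y$ from a near-$p$ point to an $f$-preimage $\tilde y$ lying within $\epsilon$ of $x$, you must continue the branch backwards from $\tilde y$; but $\tilde y$ is just some point produced by the intermediate value theorem, and nothing guarantees it has any preimages at all (let alone a backward branch that later returns near $p$ so that you can redirect again). Likewise you cannot splice together the branches $\beta_k$ accumulating at the various $z_k$, since distinct backward branches need not pass through any common point. A second, smaller defect: your ``pullback tool'' is vacuous if $f(z_k)=p$ for all large $k$ (for instance if $f$ is constant on a neighborhood of $x$), since then the IVT yields only the degenerate interval $\{p\}$ rather than a one-sided neighborhood of $p$.

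The paper's proof supplies precisely the missing mechanism, and it never needs your step 1 (that $p\in\sa(y)$). Splitting into the two sides from which $\sa(y)$ can accumulate at $x$ (say $x<p$): on the right-hand side one picks $a_n\in\sa(y)$ in a tiny interval $U_n\subset(x,p)$ near $x$; by Theorem~\ref{th:structure} the point $a_n$ is \emph{non-wandering}, so there are $b,c\in U_n$ and $k_n>0$ with $f^{k_n}(b)=c$. Since $p$ is fixed, $f^{k_n}(x)=p$, so $f^{k_n}([x,b])$ stretches from $c$ (near $x$) all the way up to $p$, and by the IVT it contains an \emph{exact} preimage $y_n\in(x,b)$ of the previously constructed point $y_{n-1}$. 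This makes the induction self-sustaining --- each new point $y_n$ again lies strictly between $x$ and the earlier thresholds, so the same argument applies to it --- and concatenating the pullbacks produces a single genuine backward orbit branch with $y_n\to x$. No gluing of pre-existing branches, and no matching of sides at $p$, is ever required. The case where $\sa(y)$ accumulates at $x$ only from the left (the side away from $p$) is genuinely harder and is handled in the paper by a separate argument built around the invariant connected set $W=\bigcap_{\epsilon>0}\bigcup_{t\geq1}f^t((x-\epsilon,x])$, which is shown to contain a left-hand neighborhood of $x$; so the side-of-approach difficulty you identify is real, but it is resolved by this different construction rather than by the side-matching you envision.
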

\begin{proof}
We may suppose without loss of generality that $x<f(x)$. A set $U$ is called a \emph{right-hand neighborhood} (resp. \emph{left-hand neighborhood}) of $x$ if it contains an interval $(x,x+\epsilon)$ (resp. $(x-\epsilon,x)$) for some $\epsilon>0$. By hypothesis either $x\in\sa(y)$, or every right-hand neighborhood of $x$ contains points from $\sa(y)$, or every left-hand neighborhood of $x$ contains points from $\sa(y)$. In the first case there is nothing to prove. The remaining two cases will be considered separately.

Suppose that every right-hand neighborhood of $x$ contains points from $\sa(y)$. We will construct inductively a sequence of points $y_n\to x$ and times $k_n>0$ such that $f^{k_0}(y_0)=y$ and $f^{k_n}(y_n)=y_{n-1}$ for $n\geq1$. We also construct points $a_n\in\sa(y)$ and open intervals $U_n\ni a_n$ that are compactly contained (i.e. their closure is contained) in $(x,f(x))$. The points $a_n$ will decrease monotonically to $x$, the intervals $U_n$ will be pairwise disjoint, and the inequalities $\sup U_{n} < y_{n-1} < \sup U_{n-1}$ will hold for all $n\geq1$.

For the base case, choose any point $a_0\in\sa(y)$ with $x<a_0<f(x)$. Choose a small open interval $U_0\ni a_0$ which is compactly contained in $(x,f(x))$. There exists $y_0\in U_0$ and $k_0>0$ such that $f^{k_0}(y_0)=y$. Now we make the induction step. Suppose that $x < y_{n-1} < \sup U_{n-1}$ and choose $a_n\in\sa(y)$ with $x<a_n<\min\{y_{n-1},\inf U_{n-1}, x+\frac1n\}$. Choose a small open interval $U_n\ni a_n$ compactly contained in $(x,f(x))$ with $\sup U_n < \min\{y_{n-1},\inf U_{n-1}\}$. By Theorem~\ref{th:structure} we know that $a_n$ is a non-wandering point, so there exist $b,c\in U_n$ and $k_n>0$ such that $f^{k_n}(b)=c$. Thus $f^{k_n}([x,b])$ contains both $f(x), c$, but $c<y_{n-1}<f(x)$, so by the intermediate value theorem there is $y_n\in(x,b)$ with $f^{k_n}(y_n)=y_{n-1}$. Clearly $y_n<\sup U_n$, so we are ready to repeat the induction step. This completes the proof in the case when every right-hand neighborhood of $x$ contains points from $\sa(y)$.

Now suppose that every left-hand neighborhood of $x$ contains points from $\sa(y)$. Consider the set
\begin{equation*}
W = \bigcap_{\epsilon>0} W_\epsilon, \text{ where } W_\epsilon = \bigcup_{t=1}^\infty f^t((x-\epsilon,x]).
\end{equation*}
For $\epsilon>0$ the set $W_\epsilon$ is invariant for $f$. It is connected because the intervals $f^t((x-\epsilon,x])$ all contain the common point $f(x)$. Now choose a point $a\in\sa(y)\cap(x-\epsilon,x)$. Since $a$ is non-wandering there are points $b,c\in(x-\epsilon,x)$ such that $f^t(b)=c$ for some $t>0$. Thus $c\in W_\epsilon$, so by connectedness we get $[x,f(x)]\in W_\epsilon$. Since these properties hold for arbitrary $\epsilon$, we see that $W$ is invariant, connected, and contains $[x,f(x)]$.

We claim that $W$ contains a left-hand neighborhood of $x$. Assume to the contrary that $x=\min W$. By continuity there is a point $z>f(x)$ such that $f([f(x),z]) \subset (x,1]$. Again by continuity there is a point $w<x$ such that $f([w,x]) \subset (x,z)$. Fix $\epsilon>0$ arbitrary. Put $U=(\max\{w,x-\epsilon\},x)$. Choose $a\in\sa(y)\cap U$. Since $a$ is non-wandering there are points $b,c\in U$ such that $f^s(b)=c$ for some $s>0$. But $f(b)>x$, so there is some $t\geq 1$ such that $f^t(b) > x$ and $f^{t+1}(b) < x$. Since $W$ is invariant, $f^t(b)\not\in W$. Therefore $f^t(b)>z$, so it follows that $z \in W_\epsilon$. Since $\epsilon>0$ was arbitrary, this shows that $z\in W$. Again fix $\epsilon>0$ arbitrary and let $U,a,b,c,s$ be as they were before. We have $f(b)\in(x,z)\subset W$ and $W$ is invariant, so $f^s(b)=c\in W$, contradicting the assumption that $x=\min W$.

Finally, we construct inductively a monotone increasing sequence of points $y_n \to x$ and a sequence of times $k_n>0$ such that $f^{k_0}(y_0)=y$ and $f^{k_n}(y_n)=y_{n-1}$ for $n\geq1$. Let $\delta>0$ be such that $W$ contains the left-hand neighborhood $(x-\delta,x)$. For the base case we find $a\in\sa(y)\cap (x-\delta,x)$. Then there is $y_0\in (x-\delta,x)$ and $k_0>0$ such that $f^{k_0}(y_0)=y$. For the induction step, suppose we are given $y_{n-1}\in (x-\delta,x)$. Choose a positive number $\epsilon<\min\{|x-y_n|,\frac1n\}$. Since $(0,\delta)\subset W \subset W_\epsilon$, we see from the definition of $W$ that there exist $y_n\in(x-\epsilon,x]$ and $k_n>0$ such that $f^{k_n}(y_n)=y_{n-1}$. Clearly $y_n\neq x$, since $y_{n-1}\neq f(x)$. Therefore $y_n\in(x-\delta,x)$ and the induction can continue.
\end{proof}

\begin{proposition}\label{prop:preperiodic}
Let $f$ be an interval map. If $x$ is preperiodic and $x\in\overline{\sa(y)}$, then $x\in\sa(y)$.
\end{proposition}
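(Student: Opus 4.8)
The plan is to reduce to the previous lemma by passing to a suitable iterate of $f$. Since $x$ is preperiodic, fix the least $n\ge1$ with $q:=f^n(x)\in\Per(f)$, let $p$ be the period of $q$, and choose $N$ to be a multiple of $p$ with $N\ge n$. Put $g=f^N$. Then $x^\ast:=g(x)=f^N(x)=f^{N-n}(q)$ lies on the orbit of $q$, so $g(x^\ast)=x^\ast$, while $x\ne x^\ast$ because $x\notin\Per(f)$. Thus $x\ne g(x)=g^2(x)$ and $g^j(x)=x^\ast$ for every $j\ge1$; with respect to $g$, the point $x$ is exactly of the type treated in the previous lemma. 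If $x\in\sa(y)$ there is nothing to prove, so assume $x\in\overline{\sa(y)}\setminus\sa(y)$; then points of $\sa(y)$ accumulate on $x$ from at least one side, and after possibly reflecting the interval we may take $x<x^\ast$.

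First I would record the single fact that lets the lemma's machinery run for $g$: every point of $\sa(y)$ is nonwandering for $g$. Indeed $\sa(y)\subseteq\Lambda^1(f)$ by Theorem~\ref{th:structure}, and if $a\in\omega(z,f)$ then infinitely many of the times $n$ with $f^n(z)$ near $a$ share a common residue $r$ modulo $N$, whence $a\in\omega(f^r(z),g)\subseteq\Omega(g)$. With this in hand I would re-run the construction of the previous lemma, but with $g$ in place of $f$ and with $x^\ast$ playing the role of the fixed point: the intermediate-value and $W$-set arguments there use only that $g^j(x)=x^\ast$ is constant for $j\ge1$ and that the approximating points of $\sa(y)$ admit short $g$-returns, both of which now hold. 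The one change is in the base step, where I would use that $x\in\overline{\sa(y)}$ furnishes an honest $f$-preimage of $y$ arbitrarily close to $x$ (rather than a $g$-preimage). The milestones $y_0,y_1,\dots\to x$ produced by the construction then satisfy $f^{k_0}(y_0)=y$ and $f^{Nm_j}(y_j)=y_{j-1}$, so filling in the intermediate $f$-preimages yields a genuine backward orbit branch of $y$ under $f$ converging to $x$. Hence $x\in\sa(y)$.

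The step I expect to be the main obstacle --- and the reason one cannot simply quote the previous lemma for $g$ --- is the mismatch between $\sa(\cdot,f)$ and $\sa(\cdot,g)$. One has $\sa(y,g)\subseteq\sa(y,f)$, but not the reverse, and the hypothesis only gives $x\in\overline{\sa(y,f)}$. Tracking a backward $f$-branch by its residues modulo $N$ expresses $\sa(y,f)$ as a union of sets $\sa(z,g)$ over $f$-preimages $z$ of $y$; consolidating these forces one to pass to a forward iterate $f^{\ell}(y)$, and the resulting inclusion $x\in\sa(f^\ell(y),f)$ points the wrong way, since $\sa(y,f)\subseteq\sa(f^\ell(y),f)$. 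This is exactly why the argument must build a backward branch of $y$ itself, using $g$ only to supply the constant geometry $g^j(x)=x^\ast$ and the short $g$-returns, while keeping the branch anchored at $y$ through an $f$-preimage in the base step. Verifying that the two sub-cases (accumulation from the side of $x^\ast$, and from the far side) transcribe faithfully to $g$ --- especially the connectedness of the sets $W_\epsilon=\bigcup_t g^t((x-\epsilon,x])$, which in the lemma relied on every forward image containing the fixed point --- is the technical heart of the proof.
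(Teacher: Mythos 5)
Your proof is correct, but it takes a genuinely different route from the paper's. The paper disposes of the proposition in three lines: choose $n$ with $x\neq f^n(x)=f^{2n}(x)$, invoke \cite[Proposition 2.9]{KMS} in the form $\sa(y,f)=\bigcup_{j=0}^{n-1}\sa(f^j(y),f^n)$, distribute the closure over this finite union to find $j$ with $x\in\overline{\sa(f^j(y),f^n)}$, apply the preceding lemma to the map $f^n$ and the point $f^j(y)$, and use the identity once more to conclude $x\in\sa(f^j(y),f^n)\subseteq\sa(y,f)$. You avoid any such identity and instead re-run the lemma's construction for $g=f^N$, keeping the original set $\sa(y,f)$ and the original anchor point $y$. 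The two ingredients you add are exactly the ones the transcription needs: every point of $\sa(y,f)$ is non-wandering for $g$ (your residue argument via Theorem~\ref{th:structure} is fine), and the mixed-exponent milestones $f^{k_0}(y_0)=y$, $f^{Nm_j}(y_j)=y_{j-1}$ fill in to an honest backward orbit branch of $y$ under $f$ whose subsequence $(y_j)$ converges to $x$ (the branch itself need not converge, but that is all the definition of $\sa$ requires); both sub-cases of the lemma --- the intermediate-value argument on the side of $x^\ast$ and the $W_\epsilon$-argument on the far side --- go through verbatim, since they use nothing about $x$ beyond $g^k(x)=x^\ast$ for all $k\geq 1$, plus non-wandering of the approximating points and the availability of preimages of the anchor near them. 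As for what each approach buys: the paper's proof is far shorter, but it rests entirely on the quoted identity, and your reservation about it is well-founded. A backward orbit branch of $f^j(y)$ under $f^n$ need not pass through $y$ --- it can descend through a different preimage of $f^j(y)$ --- so the containment $\sa(f^j(y),f^n)\subseteq\sa(y,f)$ asserted by that formula is exactly the delicate point, and it can fail for interval maps when the backward tree below $y$ is much poorer than the trees below its siblings; the robust form of the iterate formula applies $f^j$ to the set, $\sa(y,f)=\bigcup_{j=0}^{n-1}f^j\bigl(\sa(y,f^n)\bigr)$, and with that form the quick reduction is no longer immediate, because the $f^j$-preimage of $x$ it produces may have a preperiodic tail longer than $n$, so the lemma's hypothesis need not hold for it with the same $n$. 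Your construction, anchored at $y$ throughout, is immune to this subtlety; the price is that you must redo, rather than cite, the lemma's two-case argument.
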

\begin{proof}
Find $n$ such that $x\neq f^n(x)=f^{2n}(x)$ and apply~\cite[Proposition 2.9]{KMS}, which says $sa(y,f)=\bigcup_{j=0}^{n-1} \sa(f^j(y),f^n)$. Since the closure of a finite union is the union of the closures, we find $j$ such that $x\in\overline{\sa(f^j(y),f^n)}$. By Lemma 1, $x\in\sa(f^j(y),f^n)$. Again applying~\cite[Proposition 2.9]{KMS} we conclude $x\in\sa(y,f)$.
\end{proof}

\begin{proof}[Proof of Theorem~\ref{th:almost-closed}]
Let $x\in\overline{\sa(y)}$. Since $\alpha(y)$ is a closed set containing $\sa(y)$, we have $x\in\alpha(y)$ as well. If $x$ is periodic, then by Theorem~\ref{th:periodic}, $x\in\sa(y)$. If $x$ is a recurrent point in a solenoidal $\omega$-limit set, then by Theorem~\ref{th:solenoidal} we again have $x\in\sa(y)$. If $x$ belongs to a basic set $B$, then it must be periodic, preperiodic, or have $\Orb(x)$ infinite. In the periodic case Theorem~\ref{th:periodic} applies. In the preperiodic case Proposition~\ref{prop:preperiodic} shows that $x\in\sa(y)$. And when $\Orb(x)$ is infinite, then it must be contained in both $B$ and $\alpha(y)$ since those sets are both invariant. So by Theorem~\ref{th:basic} we again have $x\in\sa(y)$.
\end{proof}

\begin{proof}[Proof of Theorem~\ref{th:main-not-closed}]
Suppose first that $\sa(y)$ is not closed. Pick any point $x\in\overline{\sa(y)}\setminus\sa(y)$. By Theorem~\ref{th:structure} we have $x\in\overline{\Rec(f)}$ and in particular $x\in\Lambda^1(f)$. Let $\omega(z)$ be a maximal $\omega$-limit set of $f$ containing $x$. By Theorem~\ref{th:almost-closed}, $x$ is not recurrent and $\omega(z)$ is a solenoidal $\omega$-limit set. Then $\omega(z)$ is contained in some maximal solenoidal set $Q$. By Theorem~\ref{th:solenoidal} we know that $y\in Q$. But $Q$ also contains $x$, so we have shown that $y$ belongs to a maximal solenoidal set which contains a nonrecurrent point from the Birkhoff center. Now if $x'$ is any other point from $\overline{\sa(y)}\setminus\sa(y)$, then by the same argument, $x'$ is a nonrecurrent point in the Birkhoff center and belongs to a maximal solenoidal set $Q'$ which also contains $y$. Since two maximal solenoidal sets are either disjoint or equal, we get $Q=Q'$. This shows that
\begin{equation*}
\overline{\sa(y)}\setminus\sa(y) \subseteq Q \cap \left(\overline{\Rec(f)}\setminus\Rec(f)\right).
\end{equation*}

To prove the converse part of the theorem, suppose that $Q$ is any maximal solenoidal set for $f$ which contains a nonrecurrent point from the Birkhoff center, and fix $y\in Q$. We will show that $\sa(y)$ is not closed, and that
\begin{equation*}
\overline{\sa(y)}\setminus\sa(y) \supseteq Q \cap \left(\overline{\Rec(f)}\setminus\Rec(f)\right).
\end{equation*}
To that end, let $x$ be any point from $Q\cap(\overline{\Rec(f)}\setminus\Rec(f))$. Choose a generating sequence $\Orb(I_0) \supset \Orb(I_1) \supset \cdots$ with $Q=\bigcap\Orb(I_n)$. Without loss of generality we may assume that $x\in I_n$ for each $n$. Since $x$ is not recurrent we have $x\not\in\sa(y)$ by Theorem~\ref{th:solenoidal}. To finish the proof it suffices to show that $x\in\overline{\sa(y)}$. We do so by finding periodic points arbitrarily close to $x$ which are in the $\alpha$-limit set of $y$, and then applying Theorem~\ref{th:periodic}.

Let $K$ be the connected component of $Q$ containing $x$. By Theorem~\ref{th:solenoid-model} we know that the singleton components of $Q$ are recurrent points, and in the non-degenerate components of $Q$, only the endpoints can appear in $\omega$-limit sets. Since $x$ is in an $\omega$-limit set but is not recurrent, we know that $K$ is a non-degenerate interval with $x$ as one of its endpoints. Without loss of generality we may assume that $x$ is the left endpoint of $K$ and write $K=[x,b]$.

Again using Theorem~\ref{th:solenoid-model} (and the fact that the set $S_{\Rec}$ referred to there is perfect) we know that each component of $Q$ has at least one recurrent endpoint, and the endpoint of a component is recurrent if and only if it is the limit of points of other components of $Q$. Since $x$ is not recurrent, we know that $b$ is. We conclude that $Q$ does not accumulate on $x$ from the left and it does accumulate on $b$ from the right.

Since $x$ is in the Birkhoff center $x\in\overline{\Rec(f)}=\overline{\Per(f)}$ and $K$ contains no periodic points, we know that the periodic points of $f$ accumulate on $x$ from the left. In particular, $x$ is not the left endpoint of $[0,1]$. That means we can find non-empty left-hand neighborhoods of $x$ in $[0,1]$, i.e. open intervals with right endpoint $x$.

Since $Q$ does not accumulate on $x$ from the left, we can find a left-hand neighborhood of $x$ which contains no points from $Q$, and then there must be some $n$ such that $I_n$ does not contain that left-hand neighborhood. It follows that $x=\min(Q\cap I_n)$. At this moment we do not know if $x$ is the left endpoint of $I_n$ or not, only that it is the left-most point of $Q$ in $I_n$.

Let $m$ be the period of the cycle of intervals $\Orb(I_n)$ and let $g=f^m$. Then $I_n$ is an invariant interval for $g$. Also $f$ and $g$ have the same periodic points (but not necessarily with the same periods). Since $g^i(x)\in I_n$ for all $i$ and $x\in Q$ is neither periodic nor preperiodic, there must be $i\leq2$ such that $g^i(x)$ is not an endpoint of $I_n$. By continuity there is $\delta>0$ such that $g^i((x-\delta,x])\subset I_n$. Then by invariance we get
\begin{equation}\label{star}\tag{*}
g^j((x-\delta,x])\subset I_n \text{ for all }j\geq i.
\end{equation}

Choose an arbitrary positive real number $\epsilon<\delta$. Choose a periodic point $p$ in $(x-\epsilon,x)$. Let $k$ be the period of $p$ under the map $g$. Then $g^{ik}(p)=p$ and by~\eqref{star} $g^{ik}(p)\in I_n$. This shows that $p\in I_n$. (Incidentally, we see that $x$ is not the left endpoint of $I_n$.)

The set of fixed points of $g^k$ is closed, so let $p'$ be the largest fixed point of $g^k$ in the interval $[p,x]$. Let $h$ be the map $h=g^k=f^{mk}$. We know that $h(x)$ is an element of $Q$ in $I_n$, and therefore it lies to the right of $x$. We also know $h(x)\notin[x,b]$ because no component of $Q$ returns to itself. So the order of our points is $p'=h(p')<x<b<h(x)$, and the graph of $h|_{(p',x]}$ lies above the diagonal, since there are no other fixed points there, see Figure~\ref{fig:graph-h}.

\begin{figure}[htb!!]
\begin{tikzpicture}[scale=0.5]
\draw (0,0) -- (9,0) -- (9,9) -- (0,9) -- cycle;
\draw (0,0) -- (9,9);
\draw [fill] (1,1) circle [radius=0.1] node [below right, inner sep=2] {\small $p$};
\draw [fill] (2,2) circle [radius=0.1] node [below right, inner sep=1] {\small $p'$};
\draw [fill] (3,3) circle [radius=0.06] node [below right, inner sep=2] {\small $x$};
\draw [fill] (4,4) circle [radius=0.06] node [below right, inner sep=2] {\small $b$};
\draw [fill] (7,7) circle [radius=0.06] node [below right, inner sep=2] {\small $h(x)$};
\draw [thick] (3,3) -- (4,4);
\draw [thick] (5,5) -- (6,6);
\node [below right, inner sep=2] at (5.5,5.5) {\small $L$};
\draw [dashed, thin] (3,3) -- (3,7) -- (7,7);
\draw [fill] (3,7) circle [radius=0.1];
\draw [thick] (2,2) to [out=70, in=250] (3,7);
\end{tikzpicture}
\caption{The graph of $h|_{I_n}$. Since $\Orb(L,f)\supset Q$ and each right-hand neighborhood of $p'$ eventually covers $L$, we see that $p'\in\alpha(y)$. Note: We do not claim that $h$ is monotone on $(p',x]$, only that the graph stays above the diagonal.}
\label{fig:graph-h}
\end{figure}
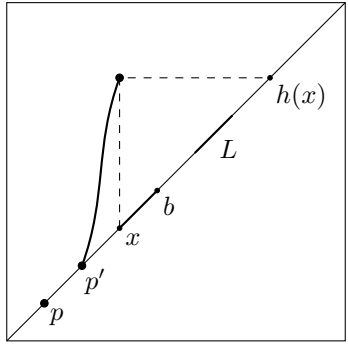

Since $Q$ accumulates on $b$ from the right we can find $n'>n$ such that one of the components $L$ of $\Orb(I_{n'}, f)$ lies between $b$ and $h(x)$. By the intermediate value theorem, $h([p',x])$ contains $L$.

Let $U$ be any right-hand neighborhood of $p'$, that is $U=(p',z)$ with $z-p'>0$ as small as we like. We consider the orbit of $U$. Since the graph of $h$ lies above the diagonal on $(p',x]$ we get a monotone increasing sequence $(h^{j}(z))_{j=0}^l$ with $l\geq1$ minimal such that $h^l(z)\geq x$ (if there is no such $l$, then $h^j(z)$ converges to a fixed point of $h$ in the interval $[z,x]$, which contradicts the choice of $p'$). Then $h^{l+1}(U)\supset L$. In particular, $\Orb(U, f) \supset \Orb(L, f) \supset \Orb(I_{n'}, f) \supset Q$.

We see that for every right-hand neighborhood $U$ of $p'$ there is a point $y'\in U$ and a time $t$ such that $f^t(y')=y$. As the neighborhood $U$ shrinks, the value of $t$ must grow without bound, because $p'$ is periodic. This shows that $p'\in\alpha(y)=\alpha(y,f)$. By Theorem~\ref{th:periodic} it follows that $p'\in\sa(y)$.

Since $p'$ was chosen from the $\epsilon$-neighborhood $(x-\epsilon,x)$ and $\epsilon>0$ can be arbitrarily small, we conclude that $x\in\overline{\sa(y)}$. This completes the proof.
\end{proof}

\subsection{Properties of a non-closed special $\alpha$-limit set}\label{sec:not-closed-props}\strut\\\indent
As an application of Theorems~\ref{th:main-not-closed} and~\ref{th:almost-closed}, we get the following results.

\begin{theorem}\label{th:not-closed}
If $\sa(y)$ is not closed, then it is uncountable and nowhere dense.
\end{theorem}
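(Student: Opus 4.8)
The plan is to handle the two assertions separately, both resting on Theorem~\ref{th:main-not-closed}: since $\sa(y)$ is not closed, $y$ must lie in a maximal solenoidal set $Q$, which we fix together with a generating sequence $\Orb(I_0)\supset\Orb(I_1)\supset\cdots$ with $Q=\bigcap_n\Orb(I_n)$. Uncountability is then immediate. Because $y\in Q$, Theorem~\ref{th:solenoidal}(2) gives $\sa(y)\supseteq S$, where $S=Q\cap\Rec(f)$ is the minimal solenoidal set. By Theorem~\ref{th:solenoid-model} the set $S$ is a nonempty perfect set, hence uncountable, and so is $\sa(y)$.

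For nowhere density I would argue by contradiction through Theorem~\ref{th:M-nd}. If $\sa(y)$ fails to be nowhere dense, then it contains a cycle of intervals which is either a transitive cycle or the union of two transitive cycles; in either case $\sa(y)$ contains some transitive cycle of intervals $L$, say of period $k$. Since $L$ is invariant with nonempty interior and $L\subseteq\sa(y)$, Lemma~\ref{lem:inv-sa} yields $y\in L$. As $Q$ is strongly invariant and $L$ is invariant, the whole forward orbit satisfies $\Orb(y)\subseteq L\cap Q$; and because $Q$ contains no periodic (hence no preperiodic) points, $\Orb(y)$ is infinite. Therefore $L\cap Q$ is infinite and cannot be contained in the finite set $\End(L)$, so there exists a point $z\in\Int(L)\cap Q$.

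The crux of the argument — and the step I expect to be the main obstacle — is to upgrade this single interior intersection point into the containment $L\subseteq Q$. Fix any $n$; since $z\in Q\subseteq\Orb(I_n)$, let $C_n$ be the component of $\Orb(I_n)$ containing $z$, and let $K$ be the component of $L$ with $z\in\Int(K)$. Because $z$ is interior to $K$ and lies in $C_n$, a one- or two-sided neighborhood of $z$ inside $C_n$ already lies in the open set $\Int(K)$, so $C_n\cap K$ contains a nonempty relatively open subinterval $U$ of $K$. The return map $f^k|_K$ is transitive, whence $\bigcup_j f^{kj}(U)$ is dense in $K$; but this union lies inside the closed invariant set $\Orb(I_n)$, so $K\subseteq\Orb(I_n)$ and consequently $L=\Orb(K)\subseteq\Orb(I_n)$. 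As $n$ was arbitrary, $L\subseteq\bigcap_n\Orb(I_n)=Q$. This is the sought contradiction: a transitive cycle of intervals contains periodic points (they are in fact dense in it), while $Q$ contains none.

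The only facts requiring care are the two invoked above about $Q$. That a maximal solenoidal set carries no periodic points is already used elsewhere in the paper (for instance in the proof of Theorem~\ref{th:basic}(2)) and follows from the semiconjugacy $\phi$ of Theorem~\ref{th:solenoid-model} onto the periodic-point-free translation $\tau$; the absence of preperiodic points in $Q$ then follows since $Q$ is strongly invariant. With these in hand the contradiction is complete, so $\sa(y)$ is nowhere dense. I note that this argument simultaneously establishes the Remark following Theorem~\ref{th:M-nd}: if $\sa(y)$ contains a cycle of intervals, then $y$ cannot lie in any solenoidal set, so by Theorem~\ref{th:main-not-closed} the set $\sa(y)$ is closed.
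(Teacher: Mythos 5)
Your proof is correct, and the uncountability half is exactly the paper's argument: Theorem~\ref{th:main-not-closed} places $y$ in a maximal solenoidal set $Q$, Theorem~\ref{th:solenoidal} gives $\sa(y)\supseteq S=Q\cap\Rec(f)$, and $S$ is perfect, hence uncountable. (Strictly speaking, perfectness of $S$ is Blokh's result quoted in the text before Theorem~\ref{th:solenoid-model} rather than part of that theorem's statement, but that is only a citation detail.) The nowhere-density half is where you genuinely depart from the paper. The paper stays at the level of its cycle-intersection machinery: if any transitive cycle $M$ met $Q$, then Lemma~\ref{lem:cyclesmeet}(2) would bound the periods of all the cycles $\Orb(I_n)$ by twice the period of $M$, contradicting periods tending to infinity; hence $Q$ meets no transitive cycle, so $y$ lies in none, so Corollary~\ref{cor:tc} says $\sa(y)$ contains no transitive cycle, and Theorem~\ref{th:M-nd} concludes. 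You instead assume $\sa(y)$ is not nowhere dense, extract a transitive cycle $L\subseteq\sa(y)$ with $y\in L\cap Q$, locate an interior intersection point $z$, and then re-derive by hand the case you need of Lemma~\ref{lem:cyclesmeet}(1): transitivity of the return map $f^k|_K$ together with closedness and invariance of each $\Orb(I_n)$ forces $K$, hence $L$, into every $\Orb(I_n)$, so $L\subseteq Q$, which is absurd because $L$ contains periodic points (a fixed point of the continuous self-map $f^k|_K$ already suffices) while $Q$ contains none. Your route is more self-contained: it avoids Lemma~\ref{lem:cyclesmeet}, whose proof is among the more intricate in the paper, and it replaces the period-counting contradiction by the cleaner periodic-point contradiction; the cost is redoing a density argument that the paper gets for free from that lemma, and the paper's route also yields the slightly stronger intermediate fact that $Q$ is disjoint from \emph{every} transitive cycle of $f$, not just those inside $\sa(y)$. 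Both arguments rest on the same three pillars (Theorems~\ref{th:main-not-closed}, \ref{th:solenoidal}, and \ref{th:M-nd}), and your closing observation is also right: the Remark after Theorem~\ref{th:M-nd} is precisely the contrapositive of the theorem just proved.
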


\begin{proof}
Suppose $\sa(y)$ is not closed. By Theorem~\ref{th:main-not-closed} we have $y\in Q$ for some solenoidal set $Q=\bigcap \Orb(I_n)$. By Theorem~\ref{th:solenoidal} we know that $\sa(y)$ contains $Q\cap\Rec(f)$. This set is perfect~\cite[Theorem 3.1]{Blokh}, and therefore uncountable.

Let $M$ be a transitive cycle of intervals for $f$. If $Q\cap M\neq\emptyset$, then by Lemma~\ref{lem:cyclesmeet}, each cycle of intervals $\Orb(I_n)$ has period at most twice the period of $M$. This contradicts the fact that the periods of the cycles of intervals tend to infinity. Therefore $Q$ does not intersect any transitive cycle of intervals $M$ for $f$. In particular, $y$ does not belong to any transitive cycle of intervals $M$. By Corollary~\ref{cor:tc} we conclude that $\sa(y)$ does not contain any transitive cycle. By Theorem~\ref{th:M-nd} it follows that $\sa(y)$ is nowhere dense.
\end{proof}

A subset of a compact metric space $X$ is called $F_\sigma$ if it is a countable union of closed sets, and $G_\delta$ if it is a countable intersection of open sets. These classes of sets make up the second level of the Borel hierarchy. Closed sets and open sets make up the first level of the Borel hierarchy and they are always both $F_\sigma$ and $G_\delta$. The next result shows that the $\sa$-limit sets of an interval map can never go past the second level of the Borel hierarchy in complexity.

\begin{theorem}
Each $\sa$-limit set for an interval map $f$ is both $F_\sigma$ and $G_\delta$.
\end{theorem}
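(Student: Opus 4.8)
The plan is to split into the cases where $\sa(y)$ is closed and where it is not, using Theorem~\ref{th:main-not-closed} to control the latter. First I would record the routine stability properties of the classes $F_\sigma$ and $G_\delta$ in a compact metric space: every closed set and every open set is simultaneously $F_\sigma$ and $G_\delta$; the class of sets that are both $F_\sigma$ and $G_\delta$ is closed under complementation; the intersection of a closed set with an $F_\sigma$ (resp.\ $G_\delta$) set is again $F_\sigma$ (resp.\ $G_\delta$); and consequently the difference $C_1\setminus C_2=C_1\cap(X\setminus C_2)$ of two closed sets, being a closed set intersected with an open set, is simultaneously $F_\sigma$ and $G_\delta$. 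If $\sa(y)$ is closed there is nothing to prove, since a closed subset of a metric space is automatically both $F_\sigma$ and $G_\delta$; so all the work lies in the non-closed case.

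Assume then that $\sa(y)$ is not closed. By Theorem~\ref{th:main-not-closed}, $y$ belongs to a maximal solenoidal set $Q$ and the missing points are described exactly by
\begin{equation*}
\overline{\sa(y)}\setminus\sa(y)=Q\cap\left(\overline{\Rec(f)}\setminus\Rec(f)\right).
\end{equation*}
Call this set $D$. The crucial observation is that, although $\Rec(f)$ is in general only a $G_\delta$ set (it equals $\bigcap_k\bigcup_n\{x:d(f^n(x),x)<1/k\}$, an intersection of open sets), inside $Q$ it coincides with the minimal solenoidal set $S_{\Rec}=Q\cap\Rec(f)$, which by Blokh's Theorem~\ref{th:solenoid-model} is perfect and hence \emph{closed}. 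Thus $Q\cap\Rec(f)=S_{\Rec}$ is closed, and therefore
\begin{equation*}
D=\left(Q\cap\overline{\Rec(f)}\right)\setminus S_{\Rec}
\end{equation*}
exhibits $D$ as the difference of two closed sets. By the preliminary remarks, $D$ is simultaneously $F_\sigma$ and $G_\delta$.

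It then remains only to intersect with the closure. Writing $\sa(y)=\overline{\sa(y)}\cap D^c$, where $\overline{\sa(y)}$ is closed and $D^c$ is again simultaneously $F_\sigma$ and $G_\delta$, the intersection rules give at once that $\sa(y)$ is both $F_\sigma$ and $G_\delta$, which completes the proof. I expect the one genuinely non-formal ingredient to be the identity $Q\cap\Rec(f)=S_{\Rec}$ together with the perfectness (hence closedness) of $S_{\Rec}$: this is precisely what upgrades the generic $G_\delta$ description of the recurrent points to a closed set once the problem is localized inside the solenoidal set $Q$, and it is what makes the symmetric $F_\sigma$ conclusion available rather than just the $G_\delta$ one. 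Everything else is bookkeeping within the first two levels of the Borel hierarchy.
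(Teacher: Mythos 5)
Your proof is correct, and it takes a genuinely different route from the paper's. The paper handles the two Borel classes separately: for $F_\sigma$ it decomposes $\sa(y)$ as $\big(\sa(y)\cap\Per(f)\big)\cup\big(\sa(y)\cap\Bas(f)\big)\cup\big(\sa(y)\cap\Sol(f)\big)$ and uses Theorem~\ref{th:almost-closed} to show each piece is relatively closed in an $F_\sigma$ set (this also needs the countability of the family of basic sets and Corollary~\ref{cor:at-most-1}); for $G_\delta$ it shows, via Theorem~\ref{th:main-not-closed} and Theorem~\ref{th:solenoid-model}, that $\overline{\sa(y)}\setminus\sa(y)$ is at most countable, being contained in the set of endpoints of non-degenerate components of $Q$. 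You instead get both classes at once from a single structural fact: writing $D=\overline{\sa(y)}\setminus\sa(y)=Q\cap\big(\overline{\Rec(f)}\setminus\Rec(f)\big)$ and observing that $Q\cap\Rec(f)=S_{\Rec}$ is perfect (hence closed) by Blokh's theorem, so that $D=\big(Q\cap\overline{\Rec(f)}\big)\setminus S_{\Rec}$ is a difference of two closed sets, i.e.\ locally closed, and the rest is Borel-hierarchy bookkeeping. Your route is leaner---it bypasses Theorem~\ref{th:almost-closed}, the countability of basic sets, and Corollary~\ref{cor:at-most-1} entirely, needing only Theorem~\ref{th:main-not-closed} together with the closedness of the minimal solenoidal set---whereas the paper's argument yields more refined information along the way: an explicit $F_\sigma$ decomposition of $\sa(y)$ into dynamically meaningful pieces, and the countability of the residual set (your argument shows instead that it is locally closed; neither statement implies the other in general, but either suffices). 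One small point worth making explicit in your write-up: $Q$ itself is closed, being an intersection of cycles of intervals, each a finite union of compact intervals---this is needed for $Q\cap\overline{\Rec(f)}$ to be closed, and the paper records it at the start of Section~\ref{sec:solenoidal}.
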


\begin{proof}
We write $\Bas(f)$ for the union of all basic $\omega$-limit sets of $f$ and $\Sol(f)$ for the union of all solenoidal $\omega$-limit sets of $f$. We continue to write $\Per(f)$ for the union of all periodic orbits of $f$.

To prove that $\sa(y)$ is of type $F_\sigma$ we express it as the following union
\begin{equation*}
\sa(y)=\big(\sa(y)\cap\Per(f)\big) \cup \big(\sa(y)\cap\Bas(f)\big) \cup \big(\sa(y)\cap\Sol(f)\big),
\end{equation*}
and show that each of the three sets in the union is of type $F_\sigma$.

The set $\Per(f)=\bigcup_n\{x~:~f^n(x)=x\}$ is clearly of type $F_\sigma$. By Theorem~\ref{th:almost-closed}, $\sa(y)\cap\Per(f)$ is a relatively closed subset of $\Per(f)$, and is therefore of type $F_\sigma$.

Since an interval map has at most countably many basic sets~\cite[Lemma 5.2]{Blokh}, their union $\Bas(f)$ is of type $F_\sigma$. By Theorem~\ref{th:almost-closed}, we know that $\sa(y)\cap \Bas(f)$ is a relatively closed subset of $\Bas(f)$, and is therefore of type $F_\sigma$.

By Theorem~\ref{th:solenoidal} and Corollary~\ref{cor:at-most-1}, we know that $\sa(y)\cap\Sol(f)$ is either the empty set, or a single minimal solenoidal set $S$, and minimal solenoidal sets are closed. Closed sets are trivially of type $F_\sigma$.

To prove that $\sa(y)$ is of type $G_\delta$ it is enough to show that $\overline{\sa(y)}\setminus\sa(y)$ is at most countable. By Theorem~\ref{th:main-not-closed} we know that this set is of the form 
\begin{equation*}
\overline{\sa(y)}\setminus\sa(y) = Q\cap(\overline{\Rec(f)}\setminus\Rec(f))
\end{equation*}
for some maximal solenoidal set $Q$. But by Theorem~\ref{th:solenoid-model} the only points in $Q$ which can be in the Birkhoff center but not recurrent are endpoints of non-degenerate components of $Q$. Since $Q\subset[0,1]$ has at most countably many non-degenerate components, this completes the proof.
\end{proof}

\subsection{Maps which have all special $\alpha$-limit sets closed}\label{sec:props}\strut\\\indent
In~\cite{KMS} it was proved that all $\sa$-limit sets of $f$ are closed if $\Per(f)$ is closed. This is a very strong condition; it implies in particular that $f$ has zero topological entropy. In this section we give necessary and sufficient criteria to decide if all $\sa$-limit sets of $f$ are closed. We show in particular that this is the case for piecewise monotone maps.  Note that an interval map $f:[0,1]\to[0,1]$ is called \emph{piecewise monotone} if there are finitely many points $0=c_0 < c_1 < \cdots < c_n = 1$ such that for each $i<n$, the restriction $f|_{[c_i,c_{i+1}]}$ is monotone, i.e. non-increasing or non-decreasing.

\begin{theorem}\label{th:all-closed}
Let $f:[0,1]\to[0,1]$ be an interval map. The following are equivalent:
\begin{enumerate}
\item For some $y\in[0,1]$, $\sa(y)$ is not closed.
\item The attracting center $\Lambda^2(f)$ is not closed.
\item The attracting center is strictly contained in the Birkhoff center $\Lambda^2(f)\subsetneq\smash{\overline{\Rec(f)}}$.
\item Some solenoidal $\omega$-limit set of $f$ contains a non-recurrent point in the Birkhoff center.
\end{enumerate}
\end{theorem}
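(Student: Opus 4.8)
The plan is to establish the four equivalences by first dispatching the purely point-set-topological equivalence (2)$\iff$(3), and then closing the cycle (1)$\Rightarrow$(3)$\Rightarrow$(4)$\Rightarrow$(1), which draws on the structural theorems of this section. For (2)$\iff$(3), I would use the chain of inclusions $\Rec(f)\subseteq\Lambda^2(f)\subseteq\overline{\Rec(f)}$ from Theorem~\ref{th:structure}. Taking closures, the outer two sets share a closure, so $\overline{\Lambda^2(f)}=\overline{\Rec(f)}$. Hence $\Lambda^2(f)$ is closed precisely when $\Lambda^2(f)=\overline{\Rec(f)}$; since $\Lambda^2(f)\subseteq\overline{\Rec(f)}$ always holds, the attracting center fails to be closed exactly when this inclusion is strict. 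I expect this to be the easiest step.

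Next I would prove (1)$\Rightarrow$(3). Assuming $\sa(y)$ is not closed, I pick $x\in\overline{\sa(y)}\setminus\sa(y)$. Because $\sa(y)\subseteq\SA(f)=\Lambda^2(f)\subseteq\overline{\Rec(f)}$ and the last set is closed, we have $x\in\overline{\Rec(f)}$, and by Theorem~\ref{th:main-not-closed} the point $x$ is non-recurrent and lies in the maximal solenoidal set $Q$ containing $y$. It then suffices to check $x\notin\Lambda^2(f)=\SA(f)$. Indeed, if $x\in\sa(z)$ for some $z$, then $\alpha(z)\cap Q\neq\emptyset$, so $z\in Q$ by Theorem~\ref{th:solenoidal}(1), whence $\sa(z)\cap Q=Q\cap\Rec(f)$ by Theorem~\ref{th:solenoidal}(2); but $x$ is non-recurrent, a contradiction. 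Thus $x\in\overline{\Rec(f)}\setminus\Lambda^2(f)$, which is (3).

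For (3)$\Rightarrow$(4), I take $x\in\overline{\Rec(f)}\setminus\Lambda^2(f)$. Since $\Rec(f)\subseteq\Lambda^2(f)$, the point $x$ is non-recurrent, and since $\overline{\Rec(f)}\subseteq\Lambda^1(f)$ it lies in some $\omega$-limit set, hence in a maximal one, which is a periodic orbit, a basic set, or a solenoidal $\omega$-limit set. It is not a periodic orbit, as $x$ is not recurrent, and it is not a basic set $B$ either: by Corollary~\ref{cor:bs} there is $w$ with $\sa(w)\supseteq B\ni x$, forcing $x\in\SA(f)=\Lambda^2(f)$, contrary to the choice of $x$. Hence $x$ belongs to a solenoidal $\omega$-limit set and is a non-recurrent point of the Birkhoff center, giving (4). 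Finally, (4)$\Rightarrow$(1) follows from the converse direction of Theorem~\ref{th:main-not-closed}: a solenoidal $\omega$-limit set is contained in a maximal solenoidal set $Q$, which therefore contains the non-recurrent Birkhoff-center point supplied by (4), so $\sa(y)$ is not closed for any $y\in Q$.

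I expect the main obstacle to be the correct routing through the three types of maximal $\omega$-limit sets in the steps (1)$\Rightarrow$(3) and (3)$\Rightarrow$(4): the real content lies in certifying that the ``missing'' boundary point is genuinely non-recurrent, solenoidal, and absent from every $\sa$-limit set of $f$. This rests entirely on Theorems~\ref{th:structure}, \ref{th:solenoidal}, and~\ref{th:main-not-closed} together with Corollary~\ref{cor:bs}; once these structural inputs are invoked, the remaining bookkeeping is routine.
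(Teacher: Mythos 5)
Your proposal is correct and follows essentially the same approach as the paper's proof: the same inputs (Theorems~\ref{th:structure}, \ref{th:solenoidal}, \ref{th:main-not-closed} and Corollary~\ref{cor:bs}) are used for the same purposes, with the only difference being cosmetic routing --- the paper runs the cycle (1)$\Rightarrow$(2)$\Rightarrow$(3)$\Rightarrow$(4)$\Rightarrow$(1), while you prove (2)$\Leftrightarrow$(3) separately and close the cycle (1)$\Rightarrow$(3)$\Rightarrow$(4)$\Rightarrow$(1), your (1)$\Rightarrow$(3) being exactly the paper's (1)$\Rightarrow$(2) argument with its conclusion restated via the containment $\Lambda^2(f)\subseteq\overline{\Rec(f)}$.
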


\begin{proof}[Proof of Theorem~\ref{th:all-closed}]
(1) $\implies$ (2): Suppose there is a point $y\in[0,1]$ with $\sa(y)$ not closed. Choose $x\in\overline{\sa(y)}\setminus\sa(y)$. By Theorem~\ref{th:main-not-closed} we know that $x$ is a non-recurrent point in a solenoidal set. By Theorem~\ref{th:solenoidal} it follows that no $\sa$-limit set of $f$ contains $x$. This shows that $x\in\overline{\SA(f)}\setminus\SA(f)$. Thus $\SA(f)$ is not closed. But $\Lambda^2(f)=\SA(f)$ by Theorem~\ref{th:structure}.

(2) $\implies$ (3): This follows immediately from the containments $\Rec(f)\subset\Lambda^2(f)\subset\overline{\Rec(f)}$ in Theorem~\ref{th:structure}.

(3) $\implies$ (4): Suppose $\SA(f)=\Lambda^2(f)\neq\overline{\Rec(f)}$. Then we can find $x\in\overline{\Rec(f)}\setminus\SA(f)$. By Theorem~\ref{th:structure} we know that $x\in\Lambda^1(f)$. Because $x$ is in an $\omega$-limit set, it must belong to a periodic orbit, a basic set, or a solenoidal $\omega$-limit set.

Each periodic orbit is contained in the $\sa$-limit set of any one of its points. Each basic set is also contained in a $\sa$-limit set by Corollary~\ref{cor:bs}. Since we supposed that $x$ is not in any $\sa$-limit set, we must conclude that $x$ is not in a periodic orbit or a basic set.

Now we know that $x$ belongs to a solenoidal $\omega$-limit set. Since $x$ is not in any $\sa$-limit set we may use Theorem~\ref{th:structure} to conclude that $x$ is not recurrent.

(4) $\implies$ (1): Suppose a solenoidal $\omega$-limit set $\omega(z)$ contains a non-recurrent point $x$ in the Birkhoff center $x\in\overline{\Rec(f)}$. Since $\omega(z)$ is solenoidal it has a generating sequence, i.e. a nested sequence of cycles of intervals $\Orb(I_0)\supset\Orb(I_1)\supset \cdots$ with $\omega(z)\subseteq Q=\bigcap_n\Orb(I_n)$. By Theorem~\ref{th:main-not-closed} for any $y\in Q$ the set $\sa(y)$ is not closed.
\end{proof}

\begin{corollary}
If $f$ is a piecewise monotone interval map, then all $\sa$-limit sets of $f$ are closed.
\end{corollary}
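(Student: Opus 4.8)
The plan is to deduce the corollary directly from Theorem~\ref{th:all-closed}, whose condition (4) isolates the only obstruction to closedness: a solenoidal $\omega$-limit set harbouring a non-recurrent point of the Birkhoff center. Thus it suffices to prove that a piecewise monotone map cannot satisfy (4), and then invoke the equivalence $(1)\Leftrightarrow(4)$. Suppose for contradiction that some solenoidal $\omega$-limit set $\omega(z)$ of $f$ contains a point $x\in\overline{\Rec(f)}\setminus\Rec(f)$. Enclose $\omega(z)$ in a maximal solenoidal set $Q=\bigcap_n\Orb(I_n)$ and consider the semiconjugacy $\phi\colon Q\to H(D)$ from $f|_Q$ to the group translation $\tau$ furnished by Theorem~\ref{th:solenoid-model}.

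First I would locate $x$ geometrically. Since $x$ lies in an $\omega$-limit set but is not recurrent, $x\in S_{\omega}\setminus S_{\Rec}$; by Theorem~\ref{th:solenoid-model} every singleton component of $Q$ consists of a recurrent point, so $x$ must in fact be an endpoint of a \emph{non-degenerate} component $K=\phi^{-1}(r)$ of $Q$. The second step is to recognise $K$ as a wandering interval. Because $\phi$ semiconjugates $f|_Q$ to $\tau$, we have $f^i(K)\subseteq\phi^{-1}(\tau^i(r))$ for all $i\ge0$. The translation $\tau$ is an adding machine with no periodic points (if $\tau^k(r)=r$ then $k\equiv0\pmod{m_j}$ for every $j$, impossible since the periods $m_j\to\infty$), so the points $\tau^i(r)$ are pairwise distinct and the sets $\phi^{-1}(\tau^i(r))$ are pairwise disjoint; hence so are the images $f^i(K)$. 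Since $\Orb(K)\subseteq Q$ and $Q$ contains no periodic points, the orbit of $K$ is not attracted to any periodic orbit. Thus the non-degenerate interval $K$ is a genuine wandering interval for $f$.

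The final step invokes the classical fact that a piecewise monotone interval map has no wandering intervals (see, e.g.,~\cite{Ruette}); this contradiction shows that $Q$ has no non-degenerate component. Consequently every point of $Q$ lies in a singleton component and is therefore recurrent, so $Q$ — and with it $\omega(z)\subseteq Q$ — contains no non-recurrent point of the Birkhoff center. Hence condition (4) of Theorem~\ref{th:all-closed} fails, and by the equivalence $(1)\Leftrightarrow(4)$ every $\sa$-limit set of $f$ is closed.

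I expect the no-wandering-interval input to be the main obstacle, as it is the only non-formal ingredient and is precisely what rules out solenoidal attractors with non-degenerate components; everything else is a bookkeeping assembly of Theorems~\ref{th:all-closed} and~\ref{th:solenoid-model} together with the structure of $Q$. A secondary point to watch is that the paper's definition of piecewise monotone permits constant (plateau) pieces: one should confirm that $K$ still qualifies as a wandering interval even if some forward image $f^i(K)$ happens to degenerate to a point. This causes no trouble, since the contradiction rests only on the disjointness of the images $f^i(K)$ and on $\Orb(K)$ avoiding periodic orbits, both of which were established above independently of any non-degeneracy of the images.
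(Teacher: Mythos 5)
Your reduction to condition (4) of Theorem~\ref{th:all-closed} is exactly the paper's strategy, and your intermediate work is sound: the non-recurrent point $x$ must indeed be an endpoint of a non-degenerate component $K$ of $Q$, and $K$ is indeed a wandering interval (this repeats the argument inside the paper's own proof of Theorem~\ref{th:solenoidal}). The gap is the final step: the ``classical fact'' that a piecewise monotone interval map has no wandering intervals is \emph{false}, and no such statement appears in \cite{Ruette}. Non-existence of wandering intervals is a smoothness phenomenon, not a consequence of piecewise monotonicity: the known theorems require hypotheses such as $C^2$ smoothness with non-flat critical points (de Melo--van Strien). At the purely continuous level a Denjoy-type blow-up defeats your claim: take an infinitely renormalizable (Feigenbaum-like) unimodal map $g$ with Cantor attractor $C$ on which $g$ acts as the dyadic adding machine, and replace each point of the grand orbit of a suitable $x_0\in C$ by a small interval, extending the map affinely across the inserted intervals. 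The resulting map is continuous with two strictly monotone laps --- hence piecewise monotone --- and the inserted intervals have pairwise disjoint images accumulating only on a set with no periodic points, i.e.\ they are wandering intervals. So the contradiction you need never materializes.

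What actually kills condition (4) for piecewise monotone maps is a strictly weaker, genuinely piecewise-monotone-specific statement, namely Blokh's Lemma PM2 \cite{Blokh}, which is what the paper cites: every point of a solenoidal $\omega$-limit set of a piecewise monotone map is recurrent. This lemma does \emph{not} forbid wandering intervals inside the maximal solenoidal set $Q$; it only guarantees that a non-recurrent endpoint of such a wandering interval never lies in any $\omega$-limit set (in the blow-up example above, both endpoints of each inserted interval are in fact recurrent, consistent with PM2). Note also that some piecewise-monotone-specific input is unavoidable: the Chu--Xiong map of Section~\ref{sec:example} realizes exactly the configuration you are trying to exclude --- a wandering interval in a solenoidal set whose non-recurrent endpoint lies in the Birkhoff center and in an $\omega$-limit set --- and that map is continuous but has infinitely many laps. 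Since your argument uses piecewise monotonicity only through the false no-wandering-interval claim, it cannot be repaired locally; the step must be replaced by an appeal to (or a proof of) Lemma PM2.
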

\begin{proof}
By~\cite[Lemma PM2]{Blokh} each point in a solenoidal $\omega$-limit set for a piecewise monotone map $f$ is recurrent, so condition (4) of Theorem~\ref{th:all-closed} can never be satisfied.
\end{proof}

We remark that in general the conditions of Theorem~\ref{th:all-closed} may be difficult to verify. Even condition (4) is difficult, since the non-recurrent points in a solenoidal $\omega$-limit set need not belong to the Birkhoff center. For an example, see~\cite{BlockCoven}. But for maps with zero topological entropy, the whole picture simplifies considerably. For the definitions of topological entropy and Li-Yorke chaos we refer the reader to any of the standard texts in topological dynamics, eg.~\cite{Ruette}.

\begin{corollary}\label{cor:all-closed}
Let $f:[0,1]\to[0,1]$ be an interval map with zero topological entropy. Then all $\sa$-limit sets of $f$ are closed if and only if $\Rec(f)$ is closed.
\end{corollary}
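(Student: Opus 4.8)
The plan is to deduce this corollary from Theorem~\ref{th:all-closed}, invoking the hypothesis of zero entropy at exactly one point: to convert the failure of $\Rec(f)$ being closed into the existence of the kind of solenoidal set flagged by condition~(4) there. Thus the whole argument is a short bridge between the recurrence condition and the equivalent conditions (1)--(4) of Theorem~\ref{th:all-closed}.

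First I would dispose of the easy implication, that $\Rec(f)$ closed forces every $\sa$-limit set to be closed, noting that this half needs no entropy assumption at all. If $\Rec(f)=\overline{\Rec(f)}$, then the chain $\Rec(f)\subset\Lambda^2(f)\subset\overline{\Rec(f)}$ supplied by Theorem~\ref{th:structure} collapses to a string of equalities, so the strict containment demanded by condition~(3) of Theorem~\ref{th:all-closed} fails. Since conditions (1)--(4) there are mutually equivalent, condition~(1) fails as well; that is, every $\sa$-limit set is closed. For the converse I would argue the contrapositive: assuming zero entropy, I will show that if $\Rec(f)$ is \emph{not} closed then some $\sa$-limit set is not closed. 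Pick $x\in\overline{\Rec(f)}\setminus\Rec(f)$, so $x$ lies in the Birkhoff center but is not recurrent. By Theorem~\ref{th:structure} we have $\overline{\Rec(f)}\subset\Lambda^1(f)$, so $x$ belongs to some maximal $\omega$-limit set. This set is not a periodic orbit, since $x$ is not periodic, hence it is an infinite maximal $\omega$-limit set, so by the trichotomy it is either a basic set or a solenoidal $\omega$-limit set. Here is where zero entropy enters to eliminate the basic case, after which $x$ necessarily sits in a solenoidal $\omega$-limit set containing a non-recurrent point of the Birkhoff center. This is precisely condition~(4) of Theorem~\ref{th:all-closed}, so condition~(1) holds and some $\sa$-limit set is not closed.

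The main obstacle, and the only structural input beyond Theorem~\ref{th:all-closed}, is the fact that a map of zero entropy has no basic sets. I would justify it directly from the machinery already assembled: by Theorem~\ref{th:basic-model} a basic set $B=B(M,f)$ admits a monotone semiconjugacy from $f|_M$ onto a transitive $m$-periodic map $g$ of a non-degenerate cycle of intervals; since transitive interval maps have positive topological entropy (see~\cite{Ruette}) and a semiconjugacy cannot raise entropy, we get $h(f)\ge h(f|_M)\ge h(g)>0$. Contrapositively, zero entropy excludes basic sets. (Alternatively one may simply cite the classical description of zero-entropy interval maps, under which every infinite $\omega$-limit set is solenoidal.) Granting this, the two implications above combine to give the stated equivalence, and everything else is a routine reading-off of the equivalences in Theorem~\ref{th:all-closed}.
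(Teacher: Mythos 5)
Your proof is correct, but it takes a genuinely different route from the paper's. The paper's own proof is a one-liner: it invokes Xiong's theorem that $\Rec(f)=\Lambda^2(f)$ for zero-entropy interval maps~\cite{Xiong}, and then reads the statement directly off condition (2) of Theorem~\ref{th:all-closed} (all $\sa$-limit sets are closed iff the attracting center is closed). You bypass Xiong's result entirely: for the easy direction you collapse the chain $\Rec(f)\subset\Lambda^2(f)\subset\overline{\Rec(f)}$ of Theorem~\ref{th:structure} against condition (3), and for the converse you verify condition (4) directly, using Blokh's decomposition of $\Lambda^1(f)$ into periodic orbits, basic sets, and solenoidal sets, together with the fact that a basic set forces positive entropy (via the semiconjugacy of Theorem~\ref{th:basic-model} onto a transitive model map, whose entropy is positive and cannot exceed that of the extension). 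Both arguments are sound. What the paper's approach buys is brevity, at the cost of an external black box; what yours buys is self-containedness relative to the machinery already in the paper, and it localizes the entropy hypothesis to a single step (excluding basic sets), which makes visible a fact the paper does not remark on: the implication ``$\Rec(f)$ closed $\Rightarrow$ all $\sa$-limit sets closed'' holds for arbitrary interval maps, with no entropy assumption. One small point to polish: the transitive model map $g$ acts on a cycle of intervals $M'$ rather than a single interval, so to conclude $h(g)>0$ from the positivity of entropy of transitive maps of an interval you should pass to $g^m$ restricted to one component of $M'$ (transitive by Lemma~\ref{lem:cycle}) and use $h(g)=h(g^m)/m$; this is routine but worth saying explicitly.
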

\begin{proof}
For an interval map $f$ with zero topological entropy, the set of recurrent points is equal to $\Lambda^2(f)$ by~\cite{Xiong}. Now apply Theorem~\ref{th:all-closed}.
\end{proof}

\begin{corollary}\label{cor:LiYorke}
Suppose $f:[0,1]\to[0,1]$ is not Li-Yorke chaotic. Then all $\sa$-limit sets of $f$ are closed.
\end{corollary}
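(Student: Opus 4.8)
The plan is to prove the contrapositive: if some $\sa$-limit set of $f$ fails to be closed, then $f$ is Li-Yorke chaotic. The first move, which is the concrete part I can carry out with the machinery already in place, is to translate the failure of closedness into a statement about recurrence. By Theorem~\ref{th:all-closed} the existence of a non-closed $\sa$-limit set is equivalent to the strict containment $\Lambda^2(f)\subsetneq\overline{\Rec(f)}$; since $\Rec(f)\subseteq\Lambda^2(f)$ by Theorem~\ref{th:structure}, this forces $\Rec(f)\subsetneq\overline{\Rec(f)}$, i.e.\ the set of recurrent points is not closed. Equivalently, by Theorem~\ref{th:main-not-closed}, there is a point $x$ which lies in a solenoidal $\omega$-limit set, is not recurrent, and yet belongs to the Birkhoff center $\overline{\Rec(f)}=\overline{\Per(f)}$.

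With this reduction in hand, the conclusion follows from the known description of Li-Yorke chaos for interval maps. Since a map with positive topological entropy is automatically Li-Yorke chaotic, a non-chaotic $f$ has zero entropy, and then $\Rec(f)=\Lambda^2(f)$ by \cite{Xiong}; thus Corollary~\ref{cor:all-closed} already reduces the whole statement to the claim that $\Rec(f)$ is closed. The content I need is therefore the converse implication for zero-entropy maps: the presence of a non-recurrent point in the Birkhoff center --- equivalently, a non-minimal solenoidal $\omega$-limit set meeting $\overline{\Per(f)}$ --- forces Li-Yorke chaos. This is exactly Smítal's description of chaos in zero-entropy interval maps, and it contradicts the hypothesis that $f$ is not Li-Yorke chaotic, completing the contrapositive.

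The step that carries the real weight, and the main obstacle, is this last one: extracting a Li-Yorke pair from the solenoidal configuration rather than quoting the characterization wholesale. The direct route would reuse the detailed picture from the proof of Theorem~\ref{th:main-not-closed}, where $x$ is the non-recurrent endpoint of a wandering component $[x,b]$ of the solenoidal set, periodic points accumulate on $x$ from the side on which the solenoid does not accumulate, and a power $h=f^{mk}$ has its graph above the diagonal on a one-sided neighborhood of $x$ while $h(x)$ overshoots $b$ into an interval $L$ whose orbit covers the whole solenoid. One tries to compare the orbit of a point just outside $[x,b]$, repeatedly dragged back toward the solenoid by the covering property of $\Orb(L)$, with that of the recurrent endpoint $b$, producing two points that are proximal but not asymptotic, i.e.\ a scrambled pair, and then invokes the Kuchta--Smítal theorem that for interval maps a single scrambled pair already yields an uncountable scrambled set. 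Verifying the inequalities $\liminf=0$ and $\limsup>0$ against the nested cycle-of-intervals structure is the delicate point; if one prefers to sidestep it, the cleanest write-up simply cites the characterization of Li-Yorke chaos for interval maps (e.g.\ via \cite{Ruette}) and applies it to the non-recurrent Birkhoff-center point obtained in the first paragraph.
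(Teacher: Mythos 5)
Your reductions coincide with the paper's own proof: pass to the contrapositive, use Theorem~\ref{th:all-closed} (or Theorem~\ref{th:main-not-closed}) to convert a non-closed $\sa$-limit set into the failure of closedness of $\Rec(f)$, note that a map which is not Li-Yorke chaotic has zero topological entropy, and feed $\Rec(f)=\Lambda^2(f)$ from \cite{Xiong} into Corollary~\ref{cor:all-closed}. At that point the paper finishes in one line: it cites T.~H.~Steele \cite[Corollary 3.4]{Steele}, which says precisely that an interval map that is not Li-Yorke chaotic has a closed set of recurrent points. So the entire content of the corollary sits in the one step you leave open.

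That step, as you write it, is a genuine gap. The claim that a solenoidal $\omega$-limit set containing a non-recurrent point of the Birkhoff center forces Li-Yorke chaos is \emph{not} ``exactly Sm\'{\i}tal's description'': the characterizations of zero-entropy chaos in the literature (Sm\'{\i}tal, Fedorenko--\v{S}arkovskii--Sm\'{\i}tal, as presented in \cite{Ruette}) are phrased in terms of the solenoidal structure itself --- roughly, the existence of two distinct points of one infinite $\omega$-limit set that lie in the same component of every cycle $\Orb(I_k)$ of a generating sequence, equivalently portions of the $\omega$-limit set whose diameters do not shrink to zero --- not in terms of recurrence versus the Birkhoff center. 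Bridging the two formulations needs an argument you never give, although Theorem~\ref{th:solenoid-model} supplies it: such a point $x$ lies in $S_\omega\setminus S_{\Rec}$, hence is an endpoint of a non-degenerate fiber $\phi^{-1}(r)$ whose other endpoint $a$ is recurrent; both $a$ and $x$ belong to any $\omega$-limit set containing $x$ and are never separated by the cycles $\Orb(I_k)$, which is what the criterion asks for. Note also that your parenthetical ``equivalently'' is false as an equivalence: as the paper remarks after Theorem~\ref{th:all-closed}, citing \cite{BlockCoven}, a non-recurrent point of a solenoidal $\omega$-limit set need not belong to the Birkhoff center, so non-minimality of the solenoidal $\omega$-limit set is strictly weaker than your condition (harmlessly so for the implication you need, but it signals the imprecision). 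Your fallback route --- building a proximal, non-asymptotic pair by hand and invoking Kuchta--Sm\'{\i}tal --- is, by your own admission, incomplete exactly at the $\liminf$/$\limsup$ verification. In short, the plan is completable, either by the fiber argument above or, as the paper does, by replacing the vague appeal with the citation \cite[Corollary 3.4]{Steele}; but as written, the only step carrying real weight is asserted rather than proved.
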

\begin{proof}
When $f$ is not Li-Yorke chaotic, T.~H.~Steele showed that $\Rec(f)$ is closed~\cite[Corollary 3.4]{Steele}. Moreover, it is well known that such a map has zero topological entropy, see eg.~\cite{Ruette}. Now apply Corollary~\ref{cor:all-closed}.
\end{proof}

\subsection{Example of a non-closed special $\alpha$-limit set}\label{sec:example}\strut\\\indent
In 1986 Chu and Xiong constructed a map $f:[0,1]\to[0,1]$ with zero topological entropy such that $\Rec(f)$ is not closed~\cite{ChuXiong}. This example appeared 6 years before the definition of $\sa$-limit sets~\cite{Hero}, but by Corollary~\ref{cor:all-closed} it provides an example of an interval map whose $\sa$-limit sets are not all closed.

In this section we give a short direct proof that one of the $\sa$-limit sets of Chu and Xiong's map $f$ is not closed. Here are the key properties of the map $f$ from~\cite{ChuXiong}.
\begin{enumerate}
\item There is a nested sequence of cycles of intervals $[0,1]=M_0 \supset M_1 \supset M_2 \supset \cdots$ for $f$, where $M_n = \Orb(J_n)$ has period $2^n$.
\item For each $n\in\N$ the interval $J_n$ is the connected component of $M_n$ which appears farthest to the left in $[0,1]$.
\item For each $n\in\N$ we can express $J_n= A_n \cup J_{n+1} \cup B_n \cup K_{n+1} \cup C_n$ as a union of five closed non-degenerate intervals with disjoint interiors appearing from left to right in the order $A_n < J_{n+1} < B_n < K_{n+1} < C_n$.
\item For each $n\in\N$ the map $f^{2^n}:J_n\to J_n$ has the following properties:
\begin{itemize}
\item $f^{2^n}|_{A_n} : A_n \to A_n \cup J_{n+1} \cup B_n$ is an increasing linear bijection,
\item $f^{2^n}|_{J_{n+1}} : J_{n+1} \to K_{n+1}$ is surjective,
\item $f^{2^n}|_{B_n} : B_n \to K_{n+1} \cup B_n \cup J_{n+1}$ is a decreasing linear bijection,
\item $f^{2^n}|_{K_{n+1}} : K_{n+1} \to J_{n+1}$ is an increasing linear bijection, and
\item $f^{2^n}|_{C_n} : C_n \to B_n \cup K_{n+1} \cup C_n$ is an increasing linear bijection.
\end{itemize}
\item The nested intersection $J_\infty = \bigcap_{n=0}^\infty J_n$ is a non-degenerate interval $J_\infty=[x,y]$.
\end{enumerate}

\begin{figure}[h!!]
\begin{minipage}{.4\textwidth}
\scalebox{0.7}{
\begin{tikzpicture}
\node [inner sep=0pt] at (0,0) {\includegraphics[width=8cm]{./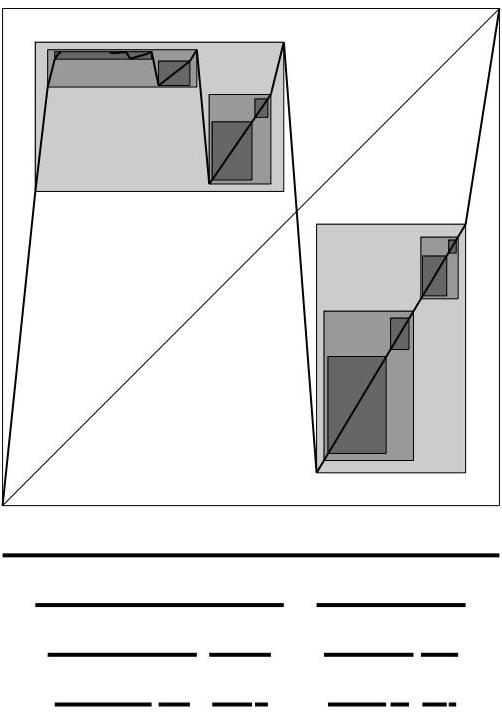}};
\node at (4.4cm,-3.2cm) {$M_0$};
\node at (3.9cm,-3.98cm) {$M_1$};
\node at (3.7cm,-4.76cm) {$M_2$};
\node at (3.6cm,-5.54cm) {$M_3$};
\node at (0cm,-2.90cm) {$J_0$};
\node at (-1.6cm,-3.68cm) {$J_1$};
\node at (2.25cm,-3.68cm) {$K_1$};
\node at (-2.1cm,-4.46cm) {$J_2$};
\node at (-0.2cm,-4.46cm) {$K_2$};
\node at (-2.3cm,-5.24cm) {$J_3$};
\node at (-1.2cm,-5.24cm) {$K_3$};
\end{tikzpicture}
}
\end{minipage}\hspace{.15\textwidth}%
\begin{minipage}{.4\textwidth}
\begin{tikzpicture}[scale=1.3]
\draw (0,0) -- (4,0) -- (4,4) -- (0,4) -- cycle;
\draw (0,0) -- (4,4);
\draw [fill, lightgray] (2.2,3.5) -- (0.5,3.5) -- (0.5,2.7) -- (2.2,2.7) -- cycle;
\draw [dashed] (3.5,2.2) -- (2.7,2.2) -- (2.7,0.5) -- (3.5,0.5) -- cycle;
\draw [dashed] (2.2,3.5) -- (0.5,3.5) -- (0.5,2.7) -- (2.2,2.7) -- cycle;
\draw [thick] (4,4) -- (3.5,2.2) -- (2.7,0.5) -- (2.2,3.5);
\draw [thick] (0.5,2.7) -- (0,0);
\draw [ultra thick] (3.5,0) -- (2.7,0);
\node [below, inner sep=3] at (3.1,0) {\small $K_{n+1}$};
\draw [ultra thick] (2.2,0) -- (0.5,0);
\node [below, inner sep=3] at (1.4,0) {\small $J_{n+1}$};
\draw [ultra thick] (0,3.5) -- (0,2.7);
\node [left, inner sep=3] at (0,3.1) {\small $K_{n+1}$};
\draw [ultra thick] (0,2.2) -- (0,0.5);
\node [left, inner sep=3] at (0,1.4) {\small $J_{n+1}$};
\node [below, inner sep=3] at (0.25,0) {\small $A_n$};
\node [below, inner sep=3] at (2.45,0) {\small $B_n$};
\node [below, inner sep=3] at (3.75,0) {\small $C_n$};
\node [left, inner sep=3] at (0,0.25) {\small $A_n$};
\node [left, inner sep=3] at (0,2.45) {\small $B_n$};
\node [left, inner sep=3] at (0,3.75) {\small $C_n$};
\node at (2,-1) {The graph of $f^{2^n}|_{J_n}$.};
\end{tikzpicture}
\end{minipage}
\caption{A map with a $\sa$-limit set which is not closed.}\label{fig:map}
\end{figure}

The graph of the map $f:[0,1]\to[0,1]$ is shown in Figure~\ref{fig:map}. Chu and Xiong showed that the left endpoint $x$ of $J_\infty$ is not recurrent, but it is a limit of recurrent points. We give a short direct proof that $\sa(x)$ is not closed.

\begin{theorem}
Let $f$ be the interval map defined in~\cite{ChuXiong} and let $x$ be the left endpoint of $J_\infty$ as defined above. Then $x\in\overline{\sa(x)}\setminus\sa(x)$, and therefore $\sa(x)$ is not closed.
\end{theorem}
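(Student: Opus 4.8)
The plan is to verify the two halves of the claim separately: first that $x\notin\sa(x)$, which is quick, and then that $x\in\overline{\sa(x)}$, which carries the real content. Throughout, set $Q=\bigcap_n M_n$. Since each $M_n=\Orb(J_n)$ has period $2^n\to\infty$, the chain $\Orb(J_0)\supset\Orb(J_1)\supset\cdots$ is a generating sequence and $Q$ is a maximal solenoidal set. Because $J_n\subseteq M_n$ for every $n$, we have $J_\infty=\bigcap_n J_n\subseteq Q$, and in particular $x\in Q$. For the first half I would simply invoke non-recurrence: Chu and Xiong proved that $x$ is not recurrent, so $x\notin\Rec(f)$, while Theorem~\ref{th:solenoidal}(2) applied to $x\in Q$ gives $\sa(x)\cap Q=Q\cap\Rec(f)$. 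As $x\in Q$ but $x\notin\Rec(f)$, it follows that $x\notin\sa(x)$.

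For the second half the idea is to produce periodic points of $\sa(x)$ converging to $x$. For each $n$ let $a_n$ be the left endpoint of $J_n$. Since $A_n$ is a non-degenerate interval lying to the left of $J_{n+1}$, the numbers $a_n$ strictly increase, and because $J_\infty=[x,y]=\bigcap_n J_n$ their supremum is exactly $x$; thus $a_n\nearrow x$, and it suffices to show $a_n\in\sa(x)$ for every $n$. The key observation is that $a_n$ is a repelling fixed point of $g_n:=f^{2^n}|_{J_n}$: indeed $g_n|_{A_n}$ is an increasing linear bijection of $A_n$ onto the strictly longer interval $A_n\cup J_{n+1}\cup B_n$, so it fixes the common left endpoint $a_n$ with slope greater than $1$, and its inverse contracts $A_n\cup J_{n+1}\cup B_n$ toward $a_n$. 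Since $x\in J_{n+1}\subseteq A_n\cup J_{n+1}\cup B_n$, I can set $x_0=x$ and $x_{k+1}=(g_n|_{A_n})^{-1}(x_k)$; each $x_k$ lies in $A_n$ for $k\ge 1$, satisfies $g_n(x_{k+1})=x_k$, and $x_k\to a_n$.

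Next I would assemble these points into a single backward orbit branch of $x$. Each relation $f^{2^n}(x_{k+1})=x_k$ is realized by the finite backward chain $x_k=f^{2^n}(x_{k+1}),\,f^{2^n-1}(x_{k+1}),\ldots,f(x_{k+1}),\,x_{k+1}$, and concatenating these chains for $k=0,1,2,\ldots$ produces a backward orbit branch of $x$ whose terms at times $k\cdot 2^n$ are exactly the $x_k$, hence a subsequence converging to $a_n$. Therefore $a_n\in\sa(x)$. (Alternatively, the $x_k$ are preimages of $x$ at the times $2^n k\to\infty$ accumulating on the periodic point $a_n$, so $a_n\in\alpha(x)$ and Theorem~\ref{th:periodic} gives $a_n\in\sa(x)$.) Letting $n\to\infty$ and using $a_n\nearrow x$ yields $x\in\overline{\sa(x)}$, and combined with $x\notin\sa(x)$ this proves that $\sa(x)$ is not closed.

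The hard part will be the construction of Steps two and three. The two points demanding care are that the contracting branch $(g_n|_{A_n})^{-1}$ is legitimately iterable, i.e. that the backward iterates $x_k$ never leave $A_n$, and that these $a_n$, although repelling periodic points lying \emph{outside} $Q$ (they cannot belong to the minimal solenoidal set $Q\cap\Rec(f)$), nevertheless approach the boundary point $x$ of the solenoidal component $J_\infty$. This last feature is exactly the mechanism by which $\sa(x)$ fails to contain its own boundary, and it is why the easy route, trying to show $x\in\overline{Q\cap\Rec(f)}$, does not work: the recurrent points of $Q$ do not accumulate on $x$, so the approximating points must be sought among periodic points external to $Q$.
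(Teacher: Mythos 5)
Your proof is correct, but it departs from the paper on exactly the half you call ``quick.'' For $x\in\overline{\sa(x)}$ your argument is the paper's argument essentially verbatim: the paper also takes $a_n$ (the left endpoint of $A_n$, which is the same point as the left endpoint of $J_n$), uses that $f^{2^n}|_{A_n}:A_n\to A_n\cup J_{n+1}\cup B_n$ is an increasing linear bijection with slope $\lambda_n>1$ fixing $a_n$, and concatenates the inverse-branch preimages of $x\in J_{n+1}$ into a backward orbit branch with $x_{k\cdot 2^n}=a_n+(x-a_n)/\lambda_n^k\to a_n$, whence $a_n\in\sa(x)$ and $a_n\nearrow x$. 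For $x\notin\sa(x)$, however, the paper invokes neither Theorem~\ref{th:solenoidal} nor Chu--Xiong's non-recurrence statement: it takes an arbitrary backward orbit branch $\{x_i\}$ of $x$ and runs a two-case analysis. If the branch stays in $Q=\bigcap M_n$, then for each $i\geq 1$ one chooses $n$ with $2^n>i$; since $f^i(x_i)\in J_n$ and the sets $J_n,f(J_n),\ldots,f^{2^n-1}(J_n)$ are pairwise disjoint, $x_i\notin J_n$, and since $J_n$ is the leftmost component of $M_n$ this forces $x_i>y>x$. If instead the branch leaves $Q$, the indices $n(i)$ with $x_i\in M_{n(i)}\setminus M_{n(i)+1}$ are non-increasing by invariance of the $M_n$, hence eventually constant $=n$, so eventually $x_i\notin M_{n+1}\supseteq J_{n+1}$, and $J_{n+1}$ is a neighborhood of $x$ by properties (3) and (5). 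Either way no branch accumulates at $x$. Your route buys brevity: given the machinery already in the paper, three lines suffice ($x\in J_\infty\subseteq Q$, $x\notin\Rec(f)$, so $x\notin\sa(x)\cap Q=Q\cap\Rec(f)$ by Theorem~\ref{th:solenoidal}(2)), and there is no circularity since that theorem is proved well before this example. What it costs is self-containedness: you lean on Theorem~\ref{th:solenoidal} (whose proof runs through Blokh's solenoid model) and on the non-recurrence of $x$, which you quote from Chu--Xiong rather than prove, whereas the section's stated aim is a ``short direct proof,'' and the paper's case analysis delivers exactly that using only properties (1)--(5) of the map.
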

\begin{proof}
Fix $n\in\N$ and let $a_n$ be the left endpoint of the interval $A_n$. Property (4) tells us that $f^{2^n}:A_n \to A_n\cup J_{n+1}\cup B_n$ is linear, say, with slope $\lambda_n$. By property (5) we have $x\in J_{n+1}$. Therefore there is a backward orbit branch $\{x_i\}_{i=0}^\infty$ of $x$ such that $x_{k\cdot 2^n} = a_n+\frac{x-a_n}{\lambda_n^k}$ for all $k\in\N$. This shows that $a_n\in\sa(x)$. But $n\in\N$ was arbitrary. If we let $n\to\infty$, then $a_n\to x$. This shows that $x\in\overline{\sa(x)}$.

Now we will show that $x\not\in\sa(x)$. Let $\{x_i\}_{i=0}^\infty$ be any backward orbit branch of $x$. Let $Q=\bigcap_{n=0}^\infty M_n$. We distinguish two cases. First suppose that $x_i\in Q$ for all $i$. For any given $i\geq1$ we can choose $n$ with $2^n>i$. Since $x_i\in M_n$, $f^i(x_i)\in J_n$, and $M_n$ is a cycle of intervals of period $2^n$, we know that $x_i\not\in J_n$. Since $J_n$ is the left-most component of $M_n$ in $[0,1]$, it follows that $x_i>y>x$ (recall that $J_\infty=\bigcap J_n=[x,y]$ is non-degenerate). Since this holds for all $i\geq1$ we see that the backward orbit branch does not accumulate on $x$.

Now suppose there is $i_0$ with $x_{i_0}\not\in Q$. For each $i\geq i_0$ there is $n(i)\in \N$ such that $x_i\in M_{n(i)}\setminus M_{n(i)+1}$. Since $f(x_{i+1})=x_i$ and each $M_n$ is invariant, we get $n(i+1)\leq n(i)$. A non-increasing sequence of natural numbers must eventually reach a minimum, say, $n(i_1)=n(i_1+1)=\cdots = n$. For $i\geq i_1$, $x_i\not\in M_{n+1}$, so in particular $x_i\not\in J_{n+1}$. But by properties (3) and (5) we know that $J_{n+1}$ is a neighborhood of $x$. This shows that this backward orbit branch does not accumulate on $x$ either.
\end{proof}

\section{Open Problems}\label{sec:open-problems}

Only one problem concerning $\sa$-limit sets of interval maps in \cite{KMS} remains open:

\begin{problem}\label{prob:A}\cite{KMS}
Characterize all subsets $A$ of [0,1] for which there exists a continuous map $f:[0,1]\rightarrow [0,1]$ and a point $x\in [0,1]$ such that $\sa(x,f)=A$.
\end{problem}

We have seen that even for interval maps, $\sa$-limit sets need not be closed. If we want to work with closed limit sets, then there are several possible solutions. The first one, suggested to us by \v{L}.~Snoha, is to answer the following question: \emph{What are some sufficient conditions on a topological dynamical system $(X,f)$ so that all of its $\sa$-limit sets are closed?} In this regard we state one conjecture which we were not able to resolve.

\begin{conjecture}
If $f:[0,1]\to[0,1]$ is continuously differentiable, then all $\sa$-limit sets of $f$ are closed.
\end{conjecture}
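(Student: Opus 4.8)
The plan is to reduce the conjecture, via Theorem~\ref{th:all-closed}, to a structural statement about solenoidal sets and then attack that with smooth-dynamics (distortion) techniques. By Theorem~\ref{th:all-closed}, all $\sa$-limit sets of $f$ are closed if and only if condition~(4) fails, i.e. no solenoidal $\omega$-limit set of $f$ contains a non-recurrent point of the Birkhoff center $\overline{\Per(f)}$. So it suffices to show that for a $C^1$ map every point of a maximal solenoidal set $Q=\bigcap_n\Orb(I_n)$ lying in $\overline{\Per(f)}$ is recurrent. Using Theorem~\ref{th:solenoid-model}, the only non-recurrent points of $Q$ that could possibly lie in $\overline{\Per(f)}$ are the non-recurrent endpoints of the non-degenerate components of $Q$: an interior point of a non-degenerate component $K$ is surrounded by $K$ and so cannot be a limit of periodic points. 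Thus I would assume, for contradiction, that some non-degenerate component $K=[x,b]$ has $x$ non-recurrent with $x\in\overline{\Per(f)}$, and reuse the geometric analysis from the proof of Theorem~\ref{th:main-not-closed}: $Q$ does not accumulate on $x$ from the left, yet there is a sequence of periodic points $p_k\nearrow x$ lying to the left of $x$ and outside $Q$, with periods necessarily tending to infinity (since $x$, lying in a solenoidal set, is not periodic).

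Next I would record the wandering-interval property of $K$. Because $\phi$ from Theorem~\ref{th:solenoid-model} semiconjugates $f|_Q$ to the adding-machine translation $\tau$, the forward images $f^i(K)$ lie in the pairwise disjoint fibres $\phi^{-1}(\tau^i(r))$; hence they are pairwise disjoint, $\sum_i |f^i(K)|\le 1$, and $|f^i(K)|\to 0$. This is exactly where the regularity of $f$ must enter. For a $C^2$ map with non-flat critical points the classical nonexistence-of-wandering-intervals theorem (see e.g.~\cite{Ruette}) forbids such a $K$ outright, forcing $Q=S$ to be a Cantor set on which every point is recurrent, so condition~(4) fails automatically. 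The whole difficulty of the $C^1$ case is that $C^1$ maps do admit Denjoy-type wandering intervals, so $K$ cannot be excluded on soft grounds.

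The crux is therefore to rule out the coexistence of a wandering component $K=[x,b]$ with periodic points crowding its non-recurrent endpoint $x$, using only $C^1$ smoothness. The strategy I would pursue is a distortion estimate along the shrinking left-hand neighborhoods $[p_k,x]$. Since $\omega(x)=S$, the forward orbit of $x$ returns arbitrarily close to $S$, and the adding-machine structure organizes these returns into first-return maps $g_n=f^{m_n}$ on the nested periodic intervals $I_n\supset K$. I would track the derivative cocycle $\prod_j f'(f^j(\,\cdot\,))$ along the orbit segments that carry $[p_k,x]$ forward until their images land near $S$, exploiting both that all these forward images remain disjoint of total length at most one and the one-sided accumulation of $Q$ at $x$, with the aim of bounding the accumulated distortion and showing that the forward images of $[p_k,x]$ cannot simultaneously stay disjoint and fail to accumulate on $x$. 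This would contradict either the wandering property of $K$ or the non-recurrence of $x$, closing the argument.

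I expect this last step to be the main obstacle, and it is precisely why the statement is only a conjecture. Pure $C^1$ regularity does not supply the bounded-distortion (Koebe) principle that makes the $C^2$ argument work, so a successful proof would have to extract distortion control from the special return geometry of solenoidal sets; failing that, the hypothesis would have to be strengthened (for instance to $f'\in\mathrm{BV}$ or to non-flat critical points), under which the classical theory applies directly and the conclusion is immediate.
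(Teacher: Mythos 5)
This statement is one of the paper's \emph{open conjectures}: the authors state explicitly that they were unable to resolve it, so there is no proof in the paper to compare your attempt against. Your proposal, by your own admission, is also not a proof, and it is worth being precise about where it stops. The reduction you carry out is correct and is the natural one inside the paper's framework: by Theorem~\ref{th:all-closed}, all $\sa$-limit sets of $f$ are closed if and only if no solenoidal $\omega$-limit set contains a non-recurrent point of the Birkhoff center, and by Theorem~\ref{th:solenoid-model} such a point could only be a non-recurrent endpoint $x$ of a non-degenerate (hence wandering) component $K=[x,b]$ of a maximal solenoidal set $Q$, with periodic points accumulating on $x$ from outside $Q$ and with periods tending to infinity. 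All of this is sound, and it isolates exactly the configuration realized by the Chu--Xiong example in Section~\ref{sec:example}.

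The genuine gap is the step you label ``the crux'': ruling out this configuration for $C^1$ maps. You propose a distortion estimate along the first-return maps $g_n=f^{m_n}$ on the nested intervals $I_n\supset K$, but no such estimate is formulated, let alone proved, and your own closing paragraph concedes that pure $C^1$ regularity does not supply the bounded-distortion principle that would make it work. That concession is accurate --- $C^1$ maps do admit Denjoy-type wandering intervals, so nothing ``soft'' forbids $K$, and the disjointness of the images $f^i(K)$ together with the one-sided accumulation of $\Per(f)$ at $x$ is precisely the data one would have to exploit; no known argument converts it into distortion control. So what you have written is a correct reformulation of the conjecture plus an identification of why it is hard, not a proof: the problem remains exactly as open after your argument as before it. Your fallback remark is also correct but proves a different statement: under $C^2$ smoothness with non-flat critical points the non-existence of wandering intervals kills condition~(4) of Theorem~\ref{th:all-closed} outright, forcing $Q$ to be a Cantor set of recurrent points; that gives a theorem under strictly stronger hypotheses, not the conjectured $C^1$ result.
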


Another possibility is to ask whether the ``typical'' interval map has all $\sa$-limit sets closed. Let $C^0([0,1])$ be the complete metric space of all maps $f:[0,1] \to [0,1]$ with the usual uniform metric $d(f,g) = \sup_{x\in[0,1]} |f(x)-g(x)|$. If some comeager subset of maps in $C^0([0,1])$ all have some property, then we call that property \emph{generic}.

\begin{problem}
Is the property of having all $\sa$-limit sets closed a generic property in $C^0([0,1])$?
\end{problem}

Another possible solution is to work with the closures of $\sa$-limit sets. Therefore we propose the following definition.

\begin{definition}
Let $(X,f)$ be a discrete dynamical system (i.e.~a continuous self-map on a compact metric space) and let $x\in X$. The \emph{$\beta$-limit set of $x$}, denoted $\beta(x)$ or $\beta(x,f)$, is the smallest closed set such that $d(x_n,\beta(x)) \to 0$ as $n\to\infty$ for every backward orbit branch $\{x_n\}_{n=0}^\infty$ of the point $x$.
\end{definition}

The letter $\beta$ here means ``backward,'' since $\beta$-limit sets serve as attractors for backward orbit branches. It is clear from the definition that $\beta(x)=\overline{\sa(x)}$.

\begin{proposition}
If $(X,f)$ is a discrete dynamical system and $x\in X$, then $\beta(x)$ is closed and strongly invariant, i.e. $f(\beta(x))=\beta(x)$. Additionally, $\beta(x)$ is nonempty if and only if $x\in\bigcap_{n=0}^\infty f^n(X)$. In particular, $\beta(x)$ is nonempty for every $x\in X$ when $f$ is surjective.
\end{proposition}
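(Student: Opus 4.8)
The plan is to exploit the identity $\beta(x)=\overline{\sa(x)}$ noted just above the statement, so that closedness is immediate: $\beta(x)$ is a closure and hence closed. Everything else reduces to understanding $\sa(x)$ together with the elementary fact that, since $X$ is compact, $f(\overline{A})=\overline{f(A)}$ for every $A\subseteq X$ (the image of the compact set $\overline{A}$ is compact, hence closed, and contains $f(A)$, while continuity gives the reverse inclusion).

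For strong invariance I would first prove the sharper statement $f(\sa(x))=\sa(x)$ and then pass to closures, since $f(\beta(x))=f(\overline{\sa(x)})=\overline{f(\sa(x))}=\overline{\sa(x)}=\beta(x)$. To see $f(\sa(x))\subseteq\sa(x)$, take $p\in\sa(x)$ witnessed by a backward orbit branch $\{x_n\}$ with $x_{n_i}\to p$; then for large $i$ we have $f(x_{n_i})=x_{n_i-1}\to f(p)$ along the same branch, so $f(p)\in\sa(x)$. For the reverse inclusion $\sa(x)\subseteq f(\sa(x))$ I use compactness: given the same $p$ and branch, the one-step-deeper preimages $x_{n_i+1}$ lie in the compact space $X$, so a subsequence converges to some $q\in X$, and continuity gives $f(q)=\lim x_{n_i}=p$; since $\{x_{n_i+1}\}$ is a subsequence of $\{x_n\}$ with indices tending to infinity, $q\in\sa(x)$, whence $p=f(q)\in f(\sa(x))$. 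The case $\sa(x)=\emptyset$ is vacuous.

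For nonemptiness I would first observe that $\beta(x)\neq\emptyset$ iff $\sa(x)\neq\emptyset$, and that $\sa(x)\neq\emptyset$ iff $x$ admits an infinite backward orbit branch: any such branch has a convergent subsequence by compactness, giving a point of $\sa(x)$, and conversely every point of $\sa(x)$ arises from such a branch. It then remains to prove that an infinite backward orbit branch of $x$ exists iff $x\in X_\infty:=\bigcap_{n\ge0}f^n(X)$. The forward direction is immediate, since a branch gives $x=f^n(x_n)\in f^n(X)$ for all $n$. For the converse I plan to show that every $y\in X_\infty$ has a preimage lying again in $X_\infty$: writing $X_n=f^n(X)$ as a nested sequence of nonempty compact sets, the sets $f^{-1}(y)\cap X_n$ are nonempty (because $y\in X_{n+1}=f(X_n)$), closed, and nested, so by the finite intersection property they share a point $z\in X_\infty$ with $f(z)=y$. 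Iterating this selection starting from $x\in X_\infty$ produces the desired infinite branch. The final ``in particular'' follows at once: if $f$ is surjective then $f^n(X)=X$ for all $n$, so $X_\infty=X$ and every $x$ qualifies.

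The main obstacle is precisely this converse direction: one cannot simply invoke K\"onig's lemma to assemble the branch, because the tree of finite backward orbit branches of $x$ may be infinitely branching. The compactness argument through the nested sets $f^{-1}(y)\cap X_n$ is what replaces it, and the same flavor of compactness extraction---passing to a convergent subsequence of one-step-deeper preimages---is exactly what powers the inclusion $\sa(x)\subseteq f(\sa(x))$ in the strong-invariance step, so the two nontrivial points of the proposition share a common technical core.
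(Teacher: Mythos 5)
Your proposal is correct and follows essentially the same route as the paper: reduce everything to $\sa(x)$ via $\beta(x)=\overline{\sa(x)}$, prove $f(\sa(x))=\sa(x)$ and then take closures, and characterize nonemptiness by the existence of a backward orbit branch. The only difference is that the paper simply cites \cite[Corollary 2.7]{KMS} and \cite[Proposition 2.3]{KMS} for these two facts, whereas you prove them from scratch; your compactness arguments (the convergent subsequence of one-step-deeper preimages, and the nested sets $f^{-1}(y)\cap f^n(X)$ in place of K\"onig's lemma) are precisely the standard proofs of those cited results.
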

\begin{proof}
By~\cite[Corollary 2.7]{KMS} we have $f(\sa(x))=\sa(x)$. Taking closures, we get $f(\beta(x))=\beta(x)$. Additionally, $\beta(x)$ is nonempty if and only if $\sa(x)$ is. But by~\cite[Proposition 2.3]{KMS} we have $\sa(x)\neq\emptyset$ if and only if $x\in\bigcap_{n=0}^\infty f^n(X)$.
\end{proof}

We can simplify Problem~\ref{prob:A} by working with $\beta$-limit sets, since they are always closed.

\begin{problem}
Characterize all subsets $A \subseteq [0,1]$ for which there exists a continuous map $f:[0,1]\to [0,1]$ and a point $x\in [0,1]$ such that $\beta(x,f)=A$.
\end{problem}

If $X$ is a compact metric space, then the space $K(X)$ consisting of all nonempty compact subsets of $X$ can be topologized with the Hausdorff metric and it is again compact. The map $X\ni x\mapsto\omega(x,f)\in K(X)$ associated to a dynamical system $(X,f)$ is usually far from continuous, but its Baire class can be useful, see eg.~\cite{Steele}. Therefore we propose the following question.

\begin{problem}
Of what Baire class (if any) is the function $[0,1]\ni x\mapsto\beta(x,f)\in K([0,1])$ when $f$ is a surjective interval map?
\end{problem}

Hero used $\sa$-limit sets to characterize the attracting center $\Lambda^2(f)$ of an interval map $f$, and some work has been done to extend his results to trees and graphs~\cite{Hero, SXCZ, SXL}. We conjecture that for graph maps, the $\beta$-limit sets can be used to characterize the Birkhoff center $\overline{\Rec(f)}$ as follows:

\begin{conjecture}
Let $f:X\to X$ be a graph map, and let $x\in X$. The following are equivalent:
\begin{enumerate}
\item $x\in\overline{\Rec(f)}$
\item $x\in\beta(x)$
\item There exists $y\in X$ such that $x\in\beta(y)$.
\end{enumerate}
\end{conjecture}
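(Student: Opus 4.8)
The plan is to prove the cycle of implications $(1)\Rightarrow(2)\Rightarrow(3)\Rightarrow(1)$, of which $(2)\Rightarrow(3)$ is immediate (take $y=x$). For $(3)\Rightarrow(1)$ I would rely only on the graph analogue of the containment $\SA(f)\subseteq\overline{\Rec(f)}$ from Theorem~\ref{th:structure}, which is exactly the kind of statement established for trees and graphs in the extensions of Hero's work~\cite{SXCZ,SXL}. Granting it, every point of $\sa(y)$ lies in $\SA(f)\subseteq\overline{\Rec(f)}$, and since $\overline{\Rec(f)}$ is closed we get $\beta(y)=\overline{\sa(y)}\subseteq\overline{\Rec(f)}$; in particular $x\in\beta(y)$ forces $x\in\overline{\Rec(f)}$. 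Thus the whole weight of the conjecture falls on $(1)\Rightarrow(2)$: every point of the Birkhoff center must lie in its own $\beta$-limit set.

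For $(1)\Rightarrow(2)$ I would split according to whether $x$ is recurrent. Suppose first $x\in\Rec(f)$, so that $N:=\omega(x)=\overline{\Orb(x)}$ satisfies $f(N)=N$. If $N$ is a minimal set, I would choose a backward orbit branch of $x$ that never leaves $N$ (possible since $f$ maps $N$ onto itself); by compactness it accumulates at some $z\in N$, so $z\in\sa(x)$, and the graph analogue of Hero's lemma~\cite{Hero} gives $\sa(x)\supseteq\omega(z)=N\ni x$. This case already covers periodic orbits, minimal solenoidal sets, and the circle-type minimal sets (irrational rotations and Denjoy Cantor sets) peculiar to graphs. If instead $N$ is not minimal, then $N$ is an infinite, non-minimal $\omega$-limit set, which by the decomposition theory can only be contained in a basic set $B$. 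Here I would establish, as in the Coven--Nitecki theorem~\cite{CovNit}, that a recurrent point lies in its own $\alpha$-limit set; since $\alpha(x)$ is closed and invariant it then contains $\overline{\Orb(x)}=N$, an infinite subset of $B$, and the graph analogue of Theorem~\ref{th:basic}(1) yields $\sa(x)\supseteq B\supseteq N\ni x$. Either way $x\in\sa(x)\subseteq\beta(x)$.

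Now suppose $x\in\overline{\Rec(f)}\setminus\Rec(f)$. The plan is to show that such $x$ must sit inside a maximal solenoidal set $Q$ (a nested intersection of cycles of subgraphs with periods tending to infinity) as a non-recurrent point of the Birkhoff center, and then to apply the graph analogue of Theorem~\ref{th:main-not-closed} with $y=x\in Q$, which outputs precisely $x\in\overline{\sa(x)}\setminus\sa(x)$, hence $x\in\beta(x)$. The structural reduction is itself nontrivial: one must argue that on a graph the circle-type minimal dynamics contribute no non-recurrent points to $\overline{\Rec(f)}$ (a Denjoy Cantor set and a minimally rotated circle both have $\Rec(f)$ closed), so that solenoidal nesting remains the only mechanism producing points of $\overline{\Rec(f)}\setminus\Rec(f)$.

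The main obstacle is the graph analogue of Theorem~\ref{th:main-not-closed}, and specifically of its hard direction, where near a non-recurrent endpoint $x$ of a component of $Q$ one produces periodic points $p'$ just to one side of $x$ and shows that every one-sided neighbourhood of $p'$ eventually covers a component of a deep cycle of subgraphs whose orbit contains $Q$, forcing $p'\in\alpha(x)$ and hence, being periodic, $p'\in\sa(x)$ by Theorem~\ref{th:periodic}. On the interval this argument is driven by the linear order, one-sided neighbourhoods, and the intermediate value theorem; on a graph these tools survive away from the finitely many branch points but must be re-engineered when $x$, or the relevant periodic points, lie at a vertex, and the ``graph above the diagonal'' picture of Figure~\ref{fig:graph-h} must be replaced by a local analysis of $f$ restricted to a suitable arc through $x$. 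I expect that carrying out this localized construction, together with the decomposition of $\overline{\Rec(f)}$ for graph maps, is where essentially all the difficulty of the conjecture lies.
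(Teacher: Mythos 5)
This statement has no proof in the paper to compare against: it appears in Section~\ref{sec:open-problems} as an open conjecture that the authors explicitly leave unresolved. So the only question is whether your proposal actually closes it, and it does not. What you have written is a proof architecture plus an honest inventory of missing ingredients, and those ingredients are exactly where the content of the conjecture lies. The implication $(2)\Rightarrow(3)$ is indeed trivial, and $(3)\Rightarrow(1)$ via a graph analogue of the inclusion $\SA(f)\subseteq\overline{\Rec(f)}$ from Theorem~\ref{th:structure} is reasonable, granting that the extensions~\cite{SXCZ,SXL} really deliver that inclusion for graph maps (you should verify this rather than assume it). But $(1)\Rightarrow(2)$ is not proved. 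In the recurrent, non-minimal case you invoke a graph analogue of the Coven--Nitecki theorem~\cite{CovNit} (that a recurrent point lies in its own $\alpha$-limit set) and a graph analogue of Theorem~\ref{th:basic}(1); neither is established, and both interval proofs are genuinely order-specific (prolongation sets, one-sided neighborhoods, the intermediate value theorem). Note that recurrence in a general compact system does \emph{not} imply $x\in\alpha(x)$ --- recurrence produces forward returns, not preimages of $x$ near $x$ --- so this step cannot be dismissed as soft; it is an interval theorem whose graph version needs its own argument.

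In the non-recurrent case the situation is worse: you reduce to a graph analogue of Theorem~\ref{th:main-not-closed}, whose interval proof is the most order-dependent argument in the paper (the periodic point $p'$ to one side of $x$, the graph of $h$ staying above the diagonal as in Figure~\ref{fig:graph-h}, repeated covering arguments via the intermediate value theorem), together with an unproved structural claim that on a graph every point of $\overline{\Rec(f)}\setminus\Rec(f)$ lies in a maximal solenoidal set --- i.e., that the circle-like pieces of Blokh's decomposition for graph maps contribute no such points. You argue the latter only for a standalone Denjoy system, not for how circle-like sets can sit inside, and interact with, the rest of a graph map; that reduction is itself a theorem requiring the full graph-map spectral decomposition. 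Since you yourself state that carrying out these steps is ``where essentially all the difficulty of the conjecture lies,'' the proposal is a sensible research plan whose skeleton matches how the paper's interval machinery would naturally be assembled, but it is not a proof, and the conjecture remains open after it.
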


The coexistence of periodic orbits for interval maps was studied by Sharkovsky~\cite{Shar64}. A special case of his theorem, proved independently by Li and Yorke, says that if $f:[0,1]\to [0,1]$ has a periodic orbit of period three, then it has periodic orbits of all periods~\cite{LiYorke}.

This suggests the problem of studying the coexistence of periodic orbits within special $\alpha$-limit sets\footnote{or equivalently, within $\beta$-limit sets, since by Theorem~\ref{th:periodic} a periodic orbit of an interval map $f$ is contained in $\sa(x)$ if and only if it is contained in $\beta(x)$.}. In this spirit, we offer one conjecture and one open problem.

\begin{conjecture}\label{conj:Sharkovsky}
Let $f:[0,1]\to[0,1]$ and $x\in[0,1]$. If $\sa(x)$ contains a periodic orbit of period 3, then for every positive integer $n$, $\sa(x)$ contains a periodic orbit of period $n$ or $2n$.
\end{conjecture}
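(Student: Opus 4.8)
The plan is to pass from $\sa(x)$ to the larger closed, strongly invariant set $\alpha(x)$ and exploit Theorem~\ref{th:periodic}: a periodic orbit $Q$ is contained in $\sa(x)$ precisely when $Q\cap\alpha(x)\neq\emptyset$. Thus it suffices to produce, for each $n$, a periodic orbit of period $n$ or $2n$ meeting $\alpha(x)$. The hypothesis supplies a period-$3$ orbit $P=\{p_1<p_2<p_3\}\subseteq\sa(x)\subseteq\alpha(x)$, and with $I=[p_1,p_2]$, $J=[p_2,p_3]$ this yields the usual Li--Yorke covering relations $f(I)\supseteq J$ and $f(J)\supseteq I\cup J$ (after relabelling for the two rotations of $P$). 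By Theorem~\ref{th:M-nd} there are two cases. If $\sa(x)$ has nonempty interior, it contains a transitive cycle of intervals $M$; since $P$ forces positive entropy, one checks that $M$ already contains periodic orbits of all periods $n$ or $2n$, and by Corollary~\ref{cor:tc} all of these lie in $\sa(x)$. So the substantive case is when $\sa(x)$ is nowhere dense.

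In that case I would use the backward dynamics of $x$ directly. Because $P\subseteq\sa(x)$, some backward orbit branch of $x$ accumulates on $P$; as the accumulation set of a backward branch is forward invariant, it accumulates on all of $P$ and in particular yields genuine preimages of $x$ lying inside $I\cup J$ and converging to $p_2$. Starting from such preimages and applying the inverse branches furnished by the covering relations $f(I)\supseteq J$, $f(J)\supseteq I\cup J$, I would build preimage sequences of $x$ whose itineraries realize longer and longer admissible words, so that their accumulation set $R\subseteq\alpha(x)$ is an infinite closed subset of the horseshoe sitting over $P$. Since $R$ is infinite and contained in an infinite maximal $\omega$-limit set, it should meet a basic set $B=B(M',f)$ in infinitely many points; Theorem~\ref{th:basic}(1) then upgrades this to $\sa(x)\supseteq B$. (If instead the cylinders of the horseshoe fail to shrink to points and $R$ contains an interval, we are thrown back into the first case via Theorem~\ref{th:M-nd}.)

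It remains to extract the periods from $B$. By Theorem~\ref{th:basic-model}, $B$ is modelled by an $m$-periodic transitive map $g$, and the structure recalled in Section~\ref{sec:basic} (from Ruette) shows that $g$ becomes topologically mixing on the pieces of $g^{m}$, or of $g^{2m}$ when $g$ is transitive but not mixing. A topologically mixing interval map has periodic orbits of every period, and the passage from $g^m$ to $g^{2m}$ in the non-mixing case is exactly what produces the dichotomy \emph{period $n$ or $2n$}. Lifting these orbits through the monotone semiconjugacy $\phi$, each gives an $f$-periodic orbit meeting $B\subseteq\sa(x)\subseteq\alpha(x)$, so by Theorem~\ref{th:periodic} it lies in $\sa(x)$. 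Combining the two cases would give a periodic orbit of period $n$ or $2n$ in $\sa(x)$ for every $n$.

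The main obstacle is the backward construction in the nowhere-dense case. Interval maps need not be expanding, so a backward orbit prescribed to follow a given itinerary need not converge to the intended point: in the presence of flat pieces or wandering intervals the fibre of a cylinder can be a whole interval, and the accumulation set $R$ may be far larger than a Cantor set of itinerary points. The delicate points are therefore to guarantee that $R$ is genuinely infinite (and not merely the finite orbit $P$ re-created), and to match the combinatorial horseshoe over $P$ with one of Blokh's basic sets so that Theorem~\ref{th:basic}(1) can be applied; the relaxation to \emph{$n$ or $2n$} in the conclusion is what buys us the freedom, inherited from the transitive-versus-mixing dichotomy, to avoid pinning down exact periods.
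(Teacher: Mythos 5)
First, a point of order: the statement you were asked to prove is not proved anywhere in the paper --- it is stated as Conjecture~\ref{conj:Sharkovsky} and explicitly left open. The paper's only supporting evidence is the lemma immediately following it, which settles the case $n=1$ by an explicit construction (using $b\in\alpha(x)$ and the covering relations of the period-3 orbit to build a nested backward orbit branch whose two monotone limit points form an orbit of period 1 or 2 inside $\sa(x)$), together with two examples showing that the dichotomy ``$n$ or $2n$'' cannot be removed. So your proposal must stand on its own, and it does not: the obstacles you flag in your last paragraph are exactly where the proof is missing, not details to be routinely checked.

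Two gaps are concrete and would make the argument fail as written. In your first case you assert that the transitive cycle $M\subseteq\sa(x)$ ``already contains periodic orbits of all periods $n$ or $2n$.'' This is false in general: every periodic orbit contained in a cycle of intervals of period $k$ visits the $k$ pairwise disjoint components cyclically, so its period is a multiple of $k$. If the period-3 orbit $P$ lies inside $M$ then $k\in\{1,3\}$, and for $k=3$ the cycle $M$ contains no orbit of period $5$ or $10$; if $P$ lies in the nowhere dense part of $\sa(x)$, then $k$ can be, say, $4$, and $M$ contains no orbit of period $3$ or $6$. Positive entropy does not repair this, and your auxiliary claim that a topologically mixing interval map has orbits of \emph{every} period is also false (the \v{S}tefan 5-cycle map is mixing and has no period-3 orbit); only the weaker ``$n$ or $2n$'' conclusion survives for mixing maps, which is why Case 1 closes only when $P$ lies in a transitive cycle of period $1$. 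In all other configurations the required orbits must be found in the nowhere dense part of $\sa(x)$ --- which is the same unsolved difficulty as your Case 2. In Case 2, the pivotal step ``$R$ is infinite and \dots should meet a basic set $B$ in infinitely many points'' is unjustified: $\alpha(x)$, unlike $\sa(x)$, need not be contained in $\Lambda^1(f)$, accumulation sets of preimage sequences (as opposed to single backward orbit branches) need not lie in $\sa(x)$, and nothing forces them to intersect a basic set at all, let alone in an infinite set, so Theorem~\ref{th:basic}(1) cannot be invoked. Finally, even granting $\sa(x)\supseteq B(M',f)$, the divisibility obstruction returns at the level of the model in Theorem~\ref{th:basic-model}: if $M'$ has period $m\geq 3$, every period occurring in $B$ is a multiple of $m$, so you would additionally need to show that the basic set produced by the horseshoe over $P$ can be taken on a cycle of period $1$ or $2$ (plausible, since $f(\mathrm{hull}(P))\supseteq \mathrm{hull}(P)$, but nowhere established). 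Your opening reduction via Theorem~\ref{th:periodic} and the idea of following itineraries backward are sound and match the spirit of the paper's $n=1$ lemma, but what you have is a program whose hard steps remain open; the conjecture is still a conjecture.
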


\begin{problem}
For which subsets $A\subseteq\mathbb{N}$ is there a map $f:[0,1]\to[0,1]$ and a point $x\in[0,1]$ such that $A$ is the set of periods of all periodic orbits of $f$ contained in $\sa(x)$?
\end{problem}

In support of Conjecture~\ref{conj:Sharkovsky}, we show that the conclusion holds for $n=1$.

\begin{lemma}
Let $f:[0,1] \to [0,1]$ and $x\in[0,1]$. If $\sa(x)$ contains a periodic orbit of period 3, then it contains a periodic orbit of period 1 or 2 as well.
\end{lemma}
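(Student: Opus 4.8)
The plan is to relativize the classical fact that a period-$3$ orbit forces a fixed point, carrying the forced periodic point into $\alpha(x)$ so that Theorem~\ref{th:periodic} can deposit its whole orbit into $\sa(x)$. Write the period-$3$ orbit as $P=\{a<b<c\}\subseteq\sa(x)\subseteq\alpha(x)$. Up to reversing the orientation of $[0,1]$ there are two combinatorial types; I treat $f(a)=b$, $f(b)=c$, $f(c)=a$, the type $f(a)=c$, $f(b)=a$, $f(c)=b$ being symmetric. With $I_1=[a,b]$ and $I_2=[b,c]$ the intermediate value theorem gives the covering relations $f(I_1)\supseteq I_2$ and $f(I_2)\supseteq[a,c]=I_1\cup I_2$. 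In particular $f(I_2)\supseteq I_2$, so $f$ has a fixed point $p\in(b,c)$, and the loop $I_1\to I_2\to I_1$ (i.e. $f^2(I_1)\supseteq I_1$) produces a point of period $2$. By Theorem~\ref{th:periodic} it suffices to exhibit a single point of a fixed-point orbit or of a period-$2$ orbit lying in $\alpha(x)$; note that only membership in the $\alpha$-limit set, i.e. approximation by preimages of $x$ of unbounded order, is required, which is weaker than membership in $\sa(x)$.

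The engine of the proof is a one-sided pull-back at a repelling periodic point. Suppose $q$ is a fixed point of some iterate $g=f^k$ with $k\in\{1,2\}$ and $g>\mathrm{id}$ on a right-hand neighborhood $(q,q+\delta)$. Then for every $t\in(q,q+\delta)$ continuity and $g>\mathrm{id}$ force a $g$-preimage of $t$ in $(q,t)$, so any $z_0\in(q,q+\delta)$ admits $z_1\in(q,q+\delta)$ with $g(z_1)=z_0$ and $q<z_1<z_0$; iterating yields $z_0>z_1>z_2>\cdots$ in $(q,q+\delta)$ decreasing to $q$. If $z_0$ is a preimage of $x$, then each $z_j$ is a preimage of $x$ of strictly larger order, so $q$ is a limit of preimages of $x$ of unbounded order, whence $q\in\alpha(x)$ (the case $g<\mathrm{id}$ on a left-hand neighborhood is symmetric). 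Combined with $\alpha(x,f^2)\subseteq\alpha(x,f)$ and Theorem~\ref{th:periodic}, this reduces the whole problem to locating a fixed point of $f$ or of $f^2$ that is repelling on one side and has a high-order preimage of $x$ in that one-sided neighborhood.

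To supply the preimages I use that $b,c\in\alpha(x)$, so preimages of $x$ of unbounded order accumulate on the endpoints of $I_2$; together with $f(I_2)\supseteq I_2$, which allows preimages to be pulled back without leaving $I_2$, this deposits high-order preimages of $x$ in suitable subintervals of $I_2$. The sign of $f-\mathrm{id}$ on $I_2$ is positive at $b$ and negative at $c$, which pins down the fixed points and their one-sided behaviour. The delicate point — and the step I expect to be the main obstacle — is \emph{matching} the repelling side of a periodic point with a one-sided neighborhood known to contain such preimages. When the only fixed point in $(b,c)$ is a simple decreasing crossing of the diagonal, it is attracting on the side where the preimages sit and the direct pull-back to $p$ fails. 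In that case I pass to $g=f^2$, for which $g(I_1)\supseteq I_1$ and $g(I_2)\supseteq I_2$, apply the same one-sided pull-back to the $g$-repelling point of the period-$2$ orbit arising from the loop $I_1\to I_2\to I_1$, and return to $f$ via $\alpha(x,f^2)\subseteq\alpha(x,f)$. Carrying out this side-matching carefully, across the finitely many sign patterns of $f-\mathrm{id}$ on $I_2$, is where the real work lies.

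As a safeguard, the pull-back chains always have accumulation points in $\alpha(x)\cap I_2$ by compactness: if any such accumulation point has period $1$ or $2$ we are finished immediately by Theorem~\ref{th:periodic}, and otherwise it belongs to an infinite maximal $\omega$-limit set, at which point Theorem~\ref{th:basic} or Theorem~\ref{th:solenoidal} forces an entire basic or minimal solenoidal set into $\sa(x)$. Since such sets need not contain an orbit of period at most $2$, however, I expect the clean and decisive route to be the explicit one-sided pull-back described above rather than this structural fallback.
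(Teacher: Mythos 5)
Your proposal stops short of a proof at exactly the point where a proof is needed. The reduction itself is fine: the covering relations, the existence of a fixed point in $(b,c)$ and of a genuine period-$2$ orbit, the one-sided pull-back mechanism, and the observation that by Theorem~\ref{th:periodic} it suffices to place a single point of period $1$ or $2$ in $\alpha(x)$. But the decisive step --- exhibiting a period-$1$ or period-$2$ point that is repelling on a side where preimages of $x$ are known to sit --- is precisely what you defer (``where the real work lies''), and the sketch you give for it does not close the gap. Concretely, preimages of $x$ are only guaranteed to accumulate at $a$, $b$, $c$. Writing $p=\min(\Fix(f)\cap(b,c))$ and $p'=\max(\Fix(f)\cap(b,c))$, one has $f>\mathrm{id}$ on $[b,p)$ and $f<\mathrm{id}$ on $(p',c]$, so inside $(b,c)$ the pull-back moves points \emph{away} from $p$ and $p'$, toward $b$ and $c$: the guaranteed supply of preimages sits exactly on the wrong sides. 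In the scenario you flag ($p=p'$, a decreasing crossing), both sides of $p$ fail, and your escape route --- pulling back at ``the $g$-repelling point of the period-$2$ orbit arising from the loop'' --- is unsupported: the covering argument tells you nothing about where that orbit lies, about the sign of $f^2-\mathrm{id}$ on either side of it, or about preimages of $x$ accumulating anywhere near it. (Also, $\Fix(f)\cap I_2$ can be infinite, so ``finitely many sign patterns of $f-\mathrm{id}$ on $I_2$'' is not a legitimate case analysis.) Your structural fallback, as you yourself note, does not produce an orbit of period $1$ or $2$ either, so as written the argument is a plan with its core missing.

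The gap is repairable within your framework, but the anchor must be $b$ itself rather than a fixed point of $f$. Put $g=f^2$. Then $g(b)=a<b$, $g(a)=c>a$ and $g(p)=p$, so $\Fix(g)$ meets both $(a,b)$ and $(b,c)$; let $(u,v)$ be the component of $[0,1]\setminus\Fix(g)$ containing $b$, so that $u,v\in\Fix(g)$, $a<u<b<v\le p$, and $g<\mathrm{id}$ on all of $(u,v)$ (constant sign plus $g(b)<b$). Since $b\in\sa(x)$, some preimage $z_0$ of $x$ lies in $(u,v)$ --- on whichever side of $b$, it does not matter --- and your IVT pull-back with target $v$ (using $g(t)<t$ and $g(v)=v>t$) yields preimages of $x$ of unbounded order increasing to $v$; hence $v\in\alpha(x)$, $v$ has $f$-period $1$ or $2$, and Theorem~\ref{th:periodic} finishes. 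No side-matching is needed because $g<\mathrm{id}$ holds on a \emph{two-sided} neighborhood of $b$. The paper's own proof avoids locating periodic points altogether: it fixes $U\ni b$ with $f^2(U)<U<f(U)$, starts from a preimage $x_1\in U$ of $x$, and uses the IVT to build a single backward orbit branch arranged as $x_1<x_3<x_5<\cdots<x_6<x_4<x_2$; the two monotone halves converge to points $x_\infty^-\le x_\infty^+$ that are interchanged by $f$, so the accumulation set of this branch is itself an orbit of period $1$ or $2$ and lies in $\sa(x)$ by definition --- no appeal to Theorem~\ref{th:periodic}, no analysis of $\Fix(f^2)$, and no stability considerations at all.
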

\begin{proof}
Let $\sa(x)$ contain the periodic orbit $\{a,b,c\}$ for the interval map $f$ with $a<b<c$. We may assume without loss of generality that $f(a)=b$, $f(b)=c$, and $f(c)=a$. By continuity we may choose a closed interval $U=[b-\epsilon,b+\epsilon]$ with $\epsilon>0$ small enough that $f^2(U)<U<f(U)$, that is to say, $\max f^2(U) < \min U < \max U < \min f(U)$. Now find $x_1\in U$ and $n_1\geq1$ such that $f^{n_1}(x_1)=x$. By the intermediate value theorem we can find $x_2\in (\max U, \min f(U))$ such that $f(x_2)=x_1$. Again, by the intermediate value theorem we can find $x_3\in (x_1,x_2)$ such that $f(x_3)=x_2$. In the next step we find $x_4\in (x_3,x_2)$ such that $f(x_4)=x_3$. Continuing inductively we find a whole sequence $(x_i)$ such that $f(x_{i+1})=x_i$, $i\geq 1$, arranged in the following order,
\begin{equation*}
x_1<x_3<x_5<\cdots < \cdots < x_6 < x_4 < x_2.
\end{equation*}
Since a bounded monotone sequence of real numbers has a limit, we may put $x_\infty^-=\lim_{i\to\infty} x_{2i+1}$ and $x_\infty^+=\lim_{i\to\infty} x_{2i+2}$, and we have $x_\infty^- \leq x_\infty^+$. Then $f(x_\infty^-)=\lim f(x_{2i+1}) = \lim x_{2i} = x_\infty^+$ and similarly $f(x_\infty^+)=x_\infty^-$. This shows that $\{x_\infty^+,x_\infty^-\}$ is a periodic orbit contained in $\sa(x)$. The period is 2 if these points are distinct and 1 if they coincide.
\end{proof}

\begin{example}
Let $f:[0,5]\to[0,5]$ be the ``connect-the-dots'' map with $f(0)=1$, $f(1)=5$, $f(4)=2$, $f(5)=0$, and which is linear (affine) on each of the intervals $[0,1]$, $[1,4]$, and $[4,5]$. Then $\sa(0)$ contains the period-three orbit $\{0,1,5\}$ and the period-two orbit $\{2,4\}$, but not the unique period-one orbit $\{3\}$.
\end{example}

\begin{example}
Let $f:[0,8]\to[0,8]$ be the ``connect-the-dots'' map with $f(0)=4$, $f(4)=8$, $f(5)=3$, $f(8)=0$, and which is linear (affine) on each of the intervals $[0,4]$, $[4,5]$, and $[5,8]$. Then $\sa(0)$ contains the period-three orbit $\{0,4,8\}$, the period-four orbit $\{1,5,3,7\}$ and the period-one orbit $\{\frac{14}{3}\}$, but not the unique period-two orbit $\{2,6\}$.
\end{example}

\addtocontents{toc}{\protect\setcounter{tocdepth}{0}}
\subsection*{Acknowledgements}
We wish to thank Piotr Oprocha for pointing us in the right direction at the beginning of this project, and telling us about several useful ideas from~\cite{Bal}. We also thank \v{L}ubo Snoha for his helpful feedback and warm encouragement. His insightful suggestions resulted in a much clearer paper, and in particular Theorem~\ref{th:main-not-closed} would not exist without him.\\

\begin{table}[h]

\begin{tabular}[t]{b{1.5cm} m{13.5cm}}

\includegraphics [width=.09\textwidth]{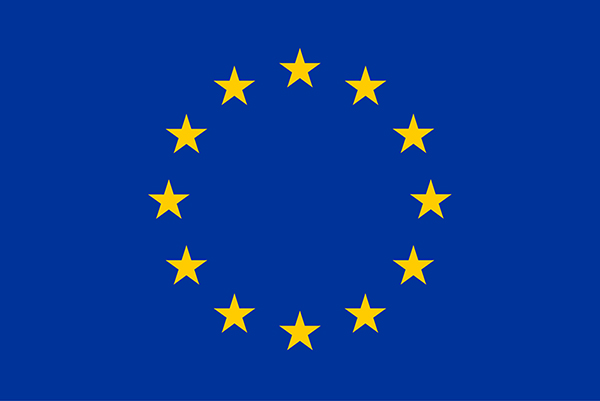} & 
This research is part of a project that has received funding from the European Union's Horizon 2020 research and innovation programme under the Marie Sk\l odowska-Curie grant agreement No 883748.

\end{tabular}
\end{table}

\end{document}